\newtheorem{thm}{Theorem}[section]
\newtheorem{ass}[thm]{Assumption}
\newtheorem{coro}[thm]{Corollary}
\newtheorem{lem}[thm]{Lemma}
\newtheorem{prop}[thm]{Proposition}
\theoremstyle{definition}
\newtheorem{defn}[thm]{Definition}
\newtheorem{nota}[thm]{Notation}
\theoremstyle{remark}
\newtheorem{remk}[thm]{Remark}
\newcommand{\supp}{{\rm supp}}
\newcommand{\ep}{\epsilon}
\newcommand{\R}{{\mathbb{R}}}
\newcommand{\gotq}{{\mathfrak q}}
\newcommand{\gotn}{{\mathfrak n}}
\newcommand{\gotN}{{\mathfrak n}}
\newcommand{\refeq}[1]{(\ref{#1})}
\newcommand{\Poly}{\mathcal{P}}
\begin{document}

\title[domains with small slits]{
      The eigenvalues of the Laplacian on domains with small slits}

\author{Luc Hillairet and Chris Judge}     

\date{\today}

\begin{abstract}
We introduce a small slit into a planar domain 
and study the resulting effect upon the eigenvalues of the 
Laplacian. In particular, we show that as the length
of the slit tends to zero, each real-analytic eigenvalue
branch tends to an eigenvalue of the original domain. 
By combining this with our earlier work \cite{HJ1},
we obtain the following application: The generic 
multiply connected polygon has simple spectrum.
\end{abstract}

\maketitle


\section{Introduction}

In this paper we study the following singular perturbation 
problem.  Let $\Omega \subset {\mathbb R^2}$ be a bounded open set 
having Lipschitz boundary. Remove from $\Omega$ the horizontal
slit, $\Sigma_t= \{ (x,y) |~ y=y_0,  |x-x_0| \leq t\}$,   
centered at the point $(x_0,y_0) \in \Omega$. Standard analytic 
perturbation theory shows that the eigenvalues of the Laplacian
on the slit domain $\Omega \setminus \Sigma_t$ depend
analytically on $t$ for $t>0$. (See Theorem \ref{Analytic}.) 
The main result of our study is the following. 

\begin{thm} \label{Main}
Let  $t \mapsto E_t$ be a real-analytic eigenvalue branch of the 
Laplacian acting on $L^2(\Omega \setminus \Sigma_t)$ 
with either Dirichlet or Neumann boundary conditions. Then 
$E_t$ converges to an eigenvalue of the Laplacian 
acting on $L^2(\Omega)$ as $t$ tends to $0$.
\end{thm}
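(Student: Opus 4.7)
The plan is to show that, for every sequence $t_n \to 0^+$, any subsequential limit of $E_{t_n}$ is an eigenvalue of the Laplacian on $\Omega$. Since $t \mapsto E_t$ is continuous on $(0,t_0]$, the set of subsequential limits is a connected subset of the discrete spectrum of $\Omega$, hence a single point, yielding the desired full limit. The proof splits into three steps: uniform boundedness of $E_t$ as $t \to 0^+$, extraction of a weak $H^1$-limit $u$ of $L^2$-normalized eigenfunctions $u_t$, and verification that $u$ satisfies the eigenvalue equation on $\Omega$.

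The key analytic input is the vanishing two-dimensional logarithmic capacity of the shrinking slit. Concretely, I would construct cutoff functions $\chi_t \in H^1(\mathbb{R}^2)$, supported in a small disk about $(x_0,y_0)$, with $\chi_t \equiv 1$ on $\Sigma_t$ and $\|\chi_t\|_{H^1} \to 0$, by truncating radial logarithms. In the Dirichlet case, for any $\phi \in H^1_0(\Omega)$ the function $(1-\chi_t)\phi$ lies in $H^1_0(\Omega \setminus \Sigma_t)$ and converges to $\phi$ in $H^1$. Combined with the min-max principle, this yields $\lambda_k(\Omega \setminus \Sigma_t) \to \lambda_k(\Omega)$ for each $k$, which in turn provides an upper bound for the branch $E_t$ as $t \to 0^+$.

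Given this bound, a standard compactness argument produces a subsequence $t_n \to 0$ along which $E_{t_n} \to E_\infty$ and along which $u_{t_n}$, extended by zero in the Dirichlet case, converges weakly in $H^1_0(\Omega)$ and strongly in $L^2(\Omega)$ to some $u$ with $\|u\|_{L^2}=1$. Passing to the limit in the weak eigenvalue equation against test functions $\phi \in C^\infty_c(\Omega \setminus \{(x_0,y_0)\}) \subset H^1_0(\Omega\setminus\Sigma_{t_n})$ gives $\int \nabla u \cdot \nabla \phi = E_\infty \int u\, \phi$; density of such $\phi$ in $H^1_0(\Omega)$, which holds because a single point has zero capacity in $\mathbb{R}^2$, identifies $u$ as a Dirichlet eigenfunction with eigenvalue $E_\infty$. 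The Neumann case is analogous: restricted away from $(x_0,y_0)$ the $u_{t_n}$ are uniformly bounded in $H^1_{\mathrm{loc}}$, a subsequential limit $u$ has finite $H^1$ norm on $\Omega \setminus \{(x_0,y_0)\}$, and by removability of a single point in two dimensions this limit belongs to $H^1(\Omega)$.

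The main obstacle I anticipate is the uniform boundedness of the analytic branch. The branch need not coincide with any fixed labeling $t \mapsto \lambda_k(\Omega\setminus\Sigma_t)$, and its rank could in principle drift through infinitely many crossings as $t \to 0^+$. A robust way to sidestep this is to upgrade the pointwise-in-$k$ convergence $\lambda_k(\Omega\setminus\Sigma_t)\to\lambda_k(\Omega)$ to Mosco convergence of the associated quadratic forms on $L^2(\Omega)$, which delivers strong resolvent convergence of the associated operators and hence spectral stability along every analytic branch simultaneously.
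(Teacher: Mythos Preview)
Your handling of ordered-eigenvalue convergence via capacity cut-offs and the identification of subsequential limits as eigenfunctions of $\Omega$ are both sound. The gap is exactly where you place it, and Mosco convergence does not close it. Mosco convergence gives only strong resolvent convergence, and even norm resolvent convergence does not bound an individual analytic branch: convergence of every $\lambda_k(t)\to\lambda_k(0)$ is compatible with one analytic branch $E_t$ climbing through infinitely many crossings so that its ordered index $k(t)\to\infty$. For each $\Lambda$ in a spectral gap of $\Omega$ one obtains a threshold $t_\Lambda>0$ below which $\Lambda$ is not an eigenvalue of $\Omega_{\Sigma_t}$, so on $(0,t_\Lambda)$ the branch stays on one side of $\Lambda$; but $t_\Lambda$ may tend to $0$ as $\Lambda\to\infty$, allowing the branch to pass every gap before the corresponding threshold takes effect. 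The paper flags precisely this phenomenon in the introduction (the rectangle $[0,t]\times[0,1/t]$, whose ordered Neumann eigenvalues all tend to $0$ while infinitely many analytic branches tend to $\infty$), and the bulk of the paper is devoted to ruling it out for the slit family.

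The paper's proof is accordingly quite different: it never compares $E_t$ to ordered eigenvalues and makes no use of resolvent convergence. Instead it bounds the logarithmic derivative directly via the variational identity $\dot E\,N(\psi)=\dot q(\psi)-E\,\dot N(\psi)$, localizes the required estimates on $\dot q,\dot N$ to an elliptical neighbourhood of the slit, and there separates variables in Mathieu coordinates; a convexity estimate for the radial Mathieu functions then yields, after a three-step bootstrap (first $t^2E_t$ bounded, then $t^{2k}E_t$ for some $k<1$, finally $E_t$), an integrable bound $\dot E\ge -(C+4t^{\delta-1})E$, and a Gronwall-type lemma finishes. Only once $E_t$ is known to be bounded does the paper invoke a compactness argument (your Steps~2--3, its Theorem~\ref{OrderedConvergence}) to identify the limit as an eigenvalue of $\Omega$.
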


We remark that the convergence of real-analytic eigenbranches is much
subtler than the convergence of ordered eigenvalues. For example,  
as $t$ tends to zero, the $k^{\rm{th}}$ eigenvalue of the 
Laplacian on the rectangle $[0,t] \times [0, 1/t]$ tends to zero,
but infinitely many analytic eigenvalue branches limit
to infinity.  Note that our proof  that each 
eigenbranch in Theorem \ref{Main} has a finite limit 
is more involved than the proof that ordered eigenvalues 
limit to an eigenvalue of $\Omega$.  
(Compare \S \ref{SecOrdered} and \S \ref{SecLimits}.) 

This work began as part of a study of the Laplace spectrum
of degenerating translation surfaces. Indeed, the 
collapse of a slit is one of the typical degenerations of 
a translation surface \cite{EMZ}. Theorem \ref{Main} deals with 
the simplest case of such a degeneration. The slit degeneration 
of a translation surface is analogous to the well-studied `neck 
pinching' degeneration of hyperbolic surfaces and our overall 
strategy for proving Theorem \ref{Main} mirrors 
the strategy employed in \cite{Wol} and \cite{Jud}.

The strategy is to control the negative variation of the 
logarithmic derivative, $\partial_t \log(E_t)$, for small $t$. 
A well-known formula---Proposition \ref{Variation}---relates $\partial_t E_t$
to certain quadratic forms defined as integrals over the domain. 
Not unexpectedly, one need only consider the contribution to these 
integrals that comes from a small neighborhood of the slit
(Corollary \ref{GlobalReduction}).  A judicious choice of coordinates 
in a neighborhood of the slit allows one to reduce the analysis of the
quadratic forms to a family of 1-dimensional problems indexed
by $i = 0, 1, 2, 3 \ldots$.  A simple convexity estimate
(Lemma \ref{SuperAdditive}) provides for control 
of the 1-dimensional contributions for $i >  (2t)^2 \cdot E_t$
where $2t$ is the width of the slit. We first show that 
$t^2 \cdot E_t$ is bounded and then use convexity for $i$ large
and compactness for $i$ small to find that  $t^{2k} \cdot E_t$
tends to zero for some $k<1$. Thus convexity applies to all 
$i> 0$, and a special estimate can be made for the case $i=0$. 

Elliptical coordinates in a neighborhood of the slit are
particularly well-suited for our purpose. 
Define $(r, \theta)$ implicitly by 
\begin{eqnarray*}
      x~ -~ x_0 &=&  \sqrt{r^2+ t^2}  \cdot \cos(\theta)  \\
      y~ -~ y_0 &=&  r \cdot \sin(\theta)
\end{eqnarray*}
The slit then corresponds exactly to the locus $r=0$ and 
the level sets $r=const$ correspond to ellipses that surround the 
slit. If, in turn, one sets $r = t \sinh(z)$ then the equation
$\Delta \psi= E \cdot \psi$ is separable in the variables $(z, \theta)$. 
The solutions to the resulting ordinary differential equations 
are called Mathieu functions. Appendix \ref{MathieuAppendix} 
contains the basic facts about these functions that we use here. 
We note that the work of Y. Colin de Verdiere \cite{CdV} called
our attention to the usefulnes of these coordinates when considering
slits, a fact that mathematical physicists have long been aware
of \cite{MrsRbn} \cite{MorseFeshbach}. 

We use Theorem \ref{Main}  to extend the generic 
simplicity results of \cite{HJ1}. For example, we 
consider multiply connected polygons with $n$ exterior 
vertices.\footnote{One can regard such a polygon as the
result of removing finitely 
many simply connected polygons from a simply connected $n$-gon.}  
In \S 7, we prove the following:

\begin{thm}\label{NonSimplyConnected}
If $n \geq 4$, then almost every multiply connected polygon has 
simple Laplace spectrum. 
\end{thm}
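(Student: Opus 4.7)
Parametrize the moduli space $\mathcal{M}_n$ of multiply connected polygons with $n$ exterior vertices by the positions of the exterior and interior vertices; each connected component (indexed by the combinatorial type of the holes) is a real-analytic manifold. On such a component, real-analytic perturbation theory yields a countable family of real-analytic eigenvalue branches $E_k$ (at least locally, or on a simply connected cover). The set of polygons with non-simple spectrum is $\bigcup_{i \neq j} D_{ij}$ with $D_{ij} = \{p \in \mathcal{M}_n : E_i(p) = E_j(p)\}$; as the zero set of a real-analytic function, each $D_{ij}$ is either the whole component or a proper subvariety of measure zero. It therefore suffices to produce, for each pair $i \neq j$, a single polygon $p^\ast \in \mathcal{M}_n$ with $E_i(p^\ast) \neq E_j(p^\ast)$.

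\medskip

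\noindent The strategy is to bootstrap from the simply connected case via a two-stage degeneration through a slit polygon. By \cite{HJ1} and the hypothesis $n \geq 4$, fix a simply connected $n$-gon $P_0$ with simple Dirichlet spectrum $\mu_1 < \mu_2 < \cdots$. For an interior point $(x_0, y_0) \in P_0$ and small $t, s > 0$, remove from $P_0$ a rectangular hole of half-length $t$ and half-width $s$ centered at $(x_0, y_0)$; this yields a two-parameter family $p(t,s) \in \mathcal{M}_n$. For fixed $t > 0$ the open sets $p(t,s)$ increase monotonically as $s \to 0^+$ to the slit domain $P_0 \setminus \Sigma_t$, so domain monotonicity of Dirichlet eigenvalues gives $\lambda_k(p(t,s)) \to \lambda_k(P_0 \setminus \Sigma_t)$ for every $k$. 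Theorem \ref{Main}, combined with the analogous monotonicity $\lambda_k(P_0 \setminus \Sigma_t) \to \mu_k$ as $t \to 0$, controls the remaining limit.

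\medskip

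\noindent Composing the two limits gives $\lambda_k(p(t,s)) \to \mu_k$ as $(t,s) \to (0,0)$ for every fixed $k$. Since the $\mu_k$ are pairwise distinct, for any $N$ and for $(t,s)$ sufficiently small the first $N$ ordered eigenvalues of $p(t,s)$ are pairwise distinct; let $p^\ast = p(t,s)$ be such a polygon. Given a pair of branches $E_i$, $E_j$ with indices $i, j \leq N$ (labeled by ordering near $p^\ast$ and extended by analytic continuation), we have $E_i(p^\ast) \neq E_j(p^\ast)$ by construction, so $D_{ij}$ is a proper analytic subvariety and has measure zero. Since $N$ is arbitrary, this disposes of every pair, and the countable union $\bigcup D_{ij}$ has measure zero, so its complement---the multiply connected polygons with simple spectrum---has full measure.

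\medskip

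\noindent The essential new ingredient is the slit-to-point limit $t \to 0$ supplied by Theorem \ref{Main}; this is the main analytic difficulty overcome in the body of the paper. The hole-to-slit limit $s \to 0$ is handled by classical domain monotonicity (or Mosco convergence) of Dirichlet eigenvalues. A subsidiary technical point is that the simple spectrum of $P_0$ only controls finitely many eigenvalues at once under the perturbation, so the witness polygon $p^\ast$ must be chosen depending on the pair $(i,j)$; however, the total number of pairs is countable and each yields a measure-zero set, so this causes no difficulty. One also works inside a single connected component of $\mathcal{M}_n$, having chosen the hole so that $p(t,s)$ lies in the desired component.
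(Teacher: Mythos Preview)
Your argument has a structural gap at the very first step. Kato--Rellich analytic perturbation theory gives real-analytic eigenvalue branches only for \emph{one-parameter} families; in a multi-parameter family of self-adjoint operators the eigenvalues need not organize into real-analytic functions even locally. The standard obstruction is the family $\left(\begin{smallmatrix} x & y \\ y & -x \end{smallmatrix}\right)$, whose eigenvalues $\pm\sqrt{x^2+y^2}$ are not real-analytic at the origin. Consequently your sets $D_{ij}=\{E_i=E_j\}$ are not well-defined as zero loci of real-analytic functions on $\mathcal{M}_n$, and the ``proper subvariety, hence measure zero'' step does not go through. Passing to a simply connected cover does not help: the problem is not monodromy but the failure of analyticity itself at eigenvalue crossings.

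A second, related gap is that your two-stage degeneration only controls \emph{ordered} eigenvalues, and only finitely many at a time. You obtain, for each $N$, a polygon $p^\ast=p^\ast(N)$ whose first $N$ eigenvalues are simple, but never a single polygon with \emph{all} eigenvalues simple. Since your branches are not globally defined, you cannot meaningfully say ``the pair $(E_i,E_j)$ is separated at $p^\ast(N)$'' for branches attached to a different base point. The paper avoids both issues by working exclusively along one-dimensional analytic paths (where Kato--Rellich does apply) and by producing a \emph{single} polygon whose entire spectrum is simple. The key step---and this is where Theorem~\ref{Main} is actually used---is Proposition~7.1: if $\Omega$ has simple spectrum, then for all but countably many $t$ the slit domain $\Omega_{\Sigma_t}$ has \emph{fully} simple spectrum. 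The proof is by contradiction on a single analytic eigenvalue branch of multiplicity $\geq 2$: two orthogonal eigenfunction branches converge (Theorems~\ref{Main} and~\ref{OrderedConvergence}) to orthogonal eigenfunctions of $\Omega$ with the same eigenvalue, contradicting simplicity. One then opens each slit into a triangle along an analytic one-parameter path of piecewise-linear homeomorphisms; analyticity of eigenvalues along that path transfers full simplicity from the slit domain to a nearby genuinely multiply connected polygon. Connectedness of $\Poly(\vec n)$ and the one-parameter argument from \cite{HJ1} then give the a.e.\ statement. Your invocation of Theorem~\ref{Main} for the convergence of ordered eigenvalues $\lambda_k(P_0\setminus\Sigma_t)\to\mu_k$ is also a misattribution: that is the easier statement handled in \S\ref{SecOrdered}, not the analytic-branch result.
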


In Theorem \ref{NonSimplyConnected} we assume that the 
boundary conditions are either Neumann on every edge or 
Dirichlet on every edge. We also consider `mixed' boundary
conditions in the sense that the condition on each boundary 
segment is either Dirichlet or Neumann. 

\begin{thm}\label{MixedDirichletNeumann}
For each combinatorial choice of mixed Dirichlet-Neumann boundary 
conditions, almost every simply connected polygon 
with $n \geq 4$ vertices has simple
Laplace spectrum. 
\end{thm}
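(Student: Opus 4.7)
The plan is to adapt the strategy of Theorem \ref{NonSimplyConnected} to the mixed boundary setting by combining Theorem \ref{Main} with the generic simplicity results of \cite{HJ1}. First I would fix the combinatorial D/N assignment $\sigma$ on the $n$ edges and let $\mathcal{M}_{n,\sigma}$ denote the space of simply connected $n$-gons equipped with this boundary data, realized as an open subset of $\R^{2n}$ modulo rigid motions. Along real-analytic paths in $\mathcal{M}_{n,\sigma}$ the mixed-boundary eigenvalues depend real-analytically on parameters, so the coincidence locus where two eigenvalues agree is a proper real-analytic subvariety provided it is not all of a connected component. By the identity principle, it therefore suffices, in each connected component, to exhibit a single polygon with simple spectrum.

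To produce such a witness polygon I would invoke Theorem \ref{Main} to pass from the mixed case to a configuration already known to enjoy generic simplicity. The key construction realizes an arbitrary $P \in \mathcal{M}_{n,\sigma}$ as the $t \to 0$ limit of a family $\Omega \setminus \Sigma_t$, where $\Omega$ has strictly simpler combinatorial boundary data---either uniform boundary conditions (covered by \cite{HJ1}) or mixed conditions with strictly fewer Dirichlet-to-Neumann switches, handled inductively. The slit $\Sigma_t$ is positioned so that, as it collapses, its two sides merge into boundary edges of $P$ carrying the desired D or N labels, and its endpoints become vertices of $P$. By Theorem \ref{Main} each real-analytic eigenvalue branch of $\Omega \setminus \Sigma_t$ converges to an eigenvalue of $P$, so that the limiting spectrum is controlled by a family of generically simple spectra.

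The transfer of genericity from $t > 0$ to $t = 0$ is the critical analytic step. If the limit spectrum of $P$ failed to be simple on an open subset of $\mathcal{M}_{n,\sigma}$, then two distinct real-analytic branches of the slit family would have to share the same limit on that open set. Since both the branches for $t > 0$ and the limits at $t = 0$ depend real-analytically on the polygon parameters, and since generic simplicity holds for $t > 0$ by the inductive hypothesis, one deduces that the two branches would be forced to coincide identically on the whole component, giving a contradiction.

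The main obstacle I anticipate lies in carrying out the slit degeneration so as to realize \emph{every} combinatorial configuration $\sigma$. In particular, one must ensure that for every $\sigma$ with at least one DN-switch the slit endpoints can be arranged to become legitimate vertices of a polygon in $\mathcal{M}_{n,\sigma}$ with the correct adjacency of D and N edges, all while keeping the ambient polygon $\Omega$ in a class where generic simplicity has already been established. The hypothesis $n \geq 4$ should provide the geometric room necessary to perform the construction without encountering degenerate low-vertex cases, and verifying the appropriate extension of Theorem \ref{Main} to domains whose outer boundary already carries mixed conditions is the final technical point to confirm.
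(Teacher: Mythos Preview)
Your proposal has a genuine gap in the central construction. You write that $P \in \mathcal{M}_{n,\sigma}$ is realized as the $t \to 0$ limit of $\Omega \setminus \Sigma_t$, with the slit's sides becoming boundary edges of $P$ and its endpoints becoming vertices of $P$. But Theorem~\ref{Main} concerns an \emph{interior} slit: as $t \to 0$ the slit shrinks to a single interior point and the family $\Omega \setminus \Sigma_t$ converges back to $\Omega$ itself, not to some new polygon with extra vertices. No new boundary edges or vertices are created in the limit, so the mechanism you describe for manufacturing DN-switches out of a collapsing slit does not exist. Moreover, for $t>0$ the slit domain $\Omega \setminus \Sigma_t$ is not a simply connected polygon in $\mathcal{M}_{n,\sigma}$; the slit is a zero-area interior cut, not a boundary edge. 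So neither endpoint of your proposed degeneration lands in the space where you need a witness.

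The paper's argument supplies exactly the missing device: a reflection trick. One starts with a rectangle $Q$ carrying Dirichlet conditions on the vertical sides and Neumann on the horizontal sides (simple spectrum for irrational aspect ratio), introduces horizontal Dirichlet slits along the horizontal midline, and obtains simplicity of the slit domain via Proposition~\ref{SlitSimplicity}. Because the configuration is symmetric under $y \mapsto 1-y$, restriction to the half-rectangle $Q' = [0,a]\times[0,\tfrac12]$ identifies the symmetric eigenfunctions with eigenfunctions of a genuine simply connected polygon: the interior slits become Dirichlet segments on the new top boundary edge, and the fixed-point set away from the slits inherits Neumann conditions. Viewing the slit endpoints as vertices, $Q'$ is an $n$-gon with strictly alternating D/N edges and simple spectrum. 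A general $\sigma$ is then obtained by collapsing consecutive edges carrying the same condition. This reflection step is the key idea your sketch is missing; the induction on ``number of DN-switches'' you propose cannot get off the ground without some analogous way to turn interior slits into honest boundary edges.
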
 

This latter theorem may be extended to multiply connected
 polygons with some mixed 
boundary conditions. It should be  noted, however, that our method 
doesn't give generic simplicity for every type of mixed boundary conditions 
in the case of multiply connected polygons. For instance, we cannot handle 
a slit with Dirichlet conditions 
on one side and Neumann conditions on the other.  

We now outline the contents of this paper. 
In \S 2 we precisely describe the eigenvalue problem 
on a slit domain and give a fuller description 
of elliptical coordinates.
In \S 3 we prove that sequences of eigenfunctions 
with uniformly bounded eigenvalues have convergent 
subsequences as the slit parameter $t$ tends to zero. 
In \S 4 we apply Kato's theory of analytic perturbations
to show that the eigenvalues and eigenfuntions depend
analytically on $t$.  We also derive variational formula
for these eigenvalues and prove that uniform estimates
on the logarithmic derivative can be `localized' to
a neighborhood of the slit. In \S 5 we prove a
convexity result for `radial' Mathieu functions.
In \S 6, we prove Theorem \ref{Main} according to the
strategy describe above. In \S 7, we precisely state
and then prove results concerning the spectral 
simplicity of polygons including Theorems 
\ref{NonSimplyConnected} and \ref{MixedDirichletNeumann}.


\section{The eigenvalue problem on a slit domain} \label{SectionProblem}
 
Let $\Omega \subset {\mathbb R}^2$ 
be a compact domain with Lipschitz boundary, and 
let $\Sigma$ be a disjoint union of line 
segments contained in the interior
of $\Omega$. The complement  $\Omega \setminus \Sigma$
is not a domain with Lipschitz boundary, and hence 
one must take care in defining the eigenvalue problem.   
The purpose of this section is to describe 
a resolution that will be useful to us. We refer the 
reader to \cite{Grisvard} for a complementary
discussion of the analysis of singular domains. 

To define the eigenvalue problem, we will regard 
a slit domain as the compact manifold with Lipschitz boundary
obtained by completing $\Omega \setminus \Sigma$
with respect to the length metric. To be more precise, 
we define the distance $d(x,y)$ between $x$ and $y$ to be 
the infimum of lengths of rectifiable paths 
$\gamma: [0,1] \rightarrow  \Omega \setminus \Sigma$
that join $x$ to $y$. Note that $d(x,y)$ equals
the  Euclidean distance between $x$ and $y$ 
iff $x$ and $y$ belong to a convex subset
of $\Omega \setminus \Sigma$.

\begin{defn}
The {\em slit domain} $\Omega_{\Sigma}$ 
is the metric completion of $\Omega \setminus \Sigma$ with
respect to $d$. We will refer to 
$\Omega_{\Sigma} \setminus (\Omega \setminus \Sigma)$
as the {\em slit}.
\end{defn}

Although $\Omega_{\Sigma}$ is not isometric to 
a subdomain of ${\mathbb R}^2$, it is naturally a 
compact Riemannian manifold with Lipschitz boundary. 
To see this, one can use elliptical coordinates 
to define a chart in a neighborhood of the slit.

\subsection{Elliptical coordinates in a neighborhood  of the slit}

Let $S^1 = {\mathbb R}/(2 \pi{\mathbb Z})$, and for each 
$t \geq 0$, define 
$\phi_t: {\mathbb R}^+ \times S^1 \rightarrow {\mathbb R}^2$
\[  \phi_{t}(r, \theta)~ 
  =~ \left( \sqrt{r^2+ t^2} \cdot \cos(\theta),~ 
        r\sin(\theta) \right).
\]  
This injective map sends each circle $\{r\} \times S^1$ 
onto an ellipse with foci at $(\pm t, 0)$. 
We have 
$\phi_t({\mathbb R}^+ \times S^1)
         ={\mathbb R}^2 \setminus \Sigma_t$
where $\Sigma_t= [-t, t] \times \{0\}$.
By regarding ${\mathbb R}^2 \setminus \Sigma_t$
as a subset of ${\mathbb R}^2_{\Sigma_t}$, 
the map $\phi_t$ extends to a smooth diffeomorphism 
from $[0, \infty) \times S^1$ onto ${\mathbb R}^2_{\Sigma_t}$. 
Abusing notation slightly, we will use $\phi_t$ 
to denote this extension.  

In these elliptical coordinates $(r, \theta)$ about 
the slit $\Sigma_t$, the gradient operator takes the form
\begin{equation} \label{Gradient}  \nabla_{t} w~ =~  
\frac{r^2+t^2}{r^2 + t^2\sin^2(\theta)} 
\cdot \frac{\partial w}{\partial r} \cdot \partial_{r}~ +~
      \frac{1}{r^2 + t^2 \sin^2(\theta) } \cdot 
 \frac{\partial w}{\partial \theta} \cdot \partial_{\theta},   
\end{equation}
and Lebesgue measure is expressed as
\begin{equation} \label{Lebesgue}   dm_{t}~ =~ 
  \frac{r^2 + t^2\sin^2(\theta)}{(r^2+t^2)^{\frac{1}{2}}}~ dr d \theta.
\end{equation}

The eigenvalue problem is unchanged if we rotate 
and/or translate the domain $\Omega$ and the slit  
$\Sigma \subset \Omega$ simultaneously.  
Thus, in the sequel, we will often make the following assumption.
\begin{ass}
We assume that $\Sigma = [-t,t] \times \{0\}$.
\end{ass}
Now choose $r_0>0$ so that the ball $B(\vec{0}, 2 r_0)$
is contained in $\Omega$. Then for $t< r_0$, the 
restriction of $\phi_t^{-1}$ to $[0, 2 r_0) \times S^1$ 
provides a chart in a neighborhood of the slit.

\subsection{The eigenvalue problem}

Let $dm$ denote Lebesgue measure. 
For each smooth function on the manifold $\Omega_{\Sigma}$
define 
\[ N(u)~  =~ \int_{\Omega} |u|^2~ dm \]
and 
\[ q(u)~  =~ \int_{\Omega} |\nabla u|^2~ dm. \]
Given  a measurable subset, $D$,
of the boundary of $\Omega_{\Sigma}$,  
define $H^1_D(\Omega_{\Sigma})$ to be the completion of
\[  \left\{ u \in C^{\infty}(\Omega_{\Sigma})~
  |~ u(m)=0,~\forall m \in   D \right\}
\]
with respect to the norm 
\[  u~ \mapsto~ q^{\frac{1}{2}}(u)~ +~  N^{\frac{1}{2}}(u). \]
The form $q$ extends to a closed quadratic form 
on $H^1_D(\Omega_{\Sigma})$ and the form 
$N$ extends to a closed quadratic form on $L^2(\Omega_{\Sigma}, dm)$. 
In the sequel we will let $\gotn(\cdot, \cdot)$ (resp.
$\gotq(\cdot, \cdot)$) denote the polarization of 
$N$ (resp. $q$).

A function $\psi \in H^1_D(\Omega_{\Sigma})$ is an 
{\em eigenfunction with eigenvalue $E$} if and only if 
\[    \gotq(\psi, v)~ =~ E \cdot \gotN(\psi,v) \]
for all $v \in H^1_D(\Omega_{\Sigma})$. Integration by parts
and standard elliptic estimates give that $\psi$ 
is smooth in $\Omega_{\Sigma}$ with $\Delta \psi = E \cdot \psi$ where $\Delta$
is the Laplacian acting on smooth functions on ${\mathbb R}^2$.
Moreover, we have 
\begin{enumerate}
\item $\psi(m)=0$ for all $m \in \mbox{Int}(D)$  (Dirichlet conditions), and 

\item $\nu(\psi)(m)=0$ for all  $m \in \mbox{Int}  (\partial \Omega_{\Sigma} \setminus D)$
    (Neumann conditions).
\end{enumerate}
Here $\nu$ denotes the outward normal derivative
along the boundary, $\partial \Omega_{\Sigma}$, of 
$\Omega_{\Sigma}$. Note that in the sequel $D$ will essentially be 
a union of segments.


\section{Convergence of  eigenfunctions with bounded eigenvalues}\label{SecOrdered}
 
Let $\Omega$ be a domain that contains the origin $\{0\}$,
and let $t_n>0$ be a sequence with $\lim_{n \rightarrow \infty} t_n =0$. 
For sufficiently large $n$, the segment $\Sigma_{t_n}=[-t_n,t_n] \times \{0\}$ 
lies in the interior of $\Omega$.  For each $n \in {\mathbb N}$,
let $D_n$ be a measurable subset of the boundary of the slit domain
$\Omega_{\Sigma_{t_n}}$ such that $D=D_n \cap \partial \Omega$
does not depend on $n$. Let $\psi_{n}$ be a normalized eigenfunction 
of $q$ on $H^1_{D_n}(\Omega_{\Sigma_{t_n}})$ with eigenvalue $E_{n}$. In this section
we prove the following:

\begin{thm} \label{OrderedConvergence}
If $E_n$ is a bounded sequence, then 
a subsequence $\psi_{n(k)}$ converges in $L^2(\Omega)$ 
to an eigenfunction $\psi$ of $q$ on $H^1_D(\Omega)$ with eigenvalue 
$E= \lim_{k \rightarrow \infty} E_{n(k)}$.  
Moreover, for every neighborhood $U$ of the slit
and every $j \in {\mathbb N}$,
the sequence $\psi_{n(k)}$ converges to $\psi$ in 
$C^j(\Omega \setminus U)$.  
\end{thm}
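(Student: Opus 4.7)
The plan is to extract a limit by local compactness on $\Omega\setminus\{0\}$, show it extends to $H^1(\Omega)$ using that a single point has zero Sobolev capacity in $\mathbb{R}^2$, verify the weak eigenvalue equation via density of test functions vanishing near the origin, and finally upgrade local $L^2$ convergence to convergence in $L^2(\Omega)$ by ruling out mass concentration at the slit's shrinking locus.

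First, for any compact $K\subset\Omega\setminus\{0\}$ we have $K\cap\Sigma_{t_n}=\emptyset$ for $n$ large, and on such $K$ the uniform bound
\[
 \|\psi_n\|_{H^1(K)}^2\;\le\;N(\psi_n)+q(\psi_n)=1+E_n\;\le\;C
\]
combined with Rellich compactness and a diagonal argument over a compact exhaustion produces, after also passing to a further subsequence with $E_{n(k)}\to E$, a limit $\psi$ to which $\psi_{n(k)}$ converges in $L^2_{\mathrm{loc}}(\Omega\setminus\{0\})$ and weakly in $H^1_{\mathrm{loc}}(\Omega\setminus\{0\})$. Lower semicontinuity of the $L^2$-norm under weak convergence promotes the uniform bound on $\nabla\psi_n$ to $\nabla\psi\in L^2(\Omega)$, Fatou gives $\psi\in L^2(\Omega)$, and since $\{0\}$ has zero Sobolev $2$-capacity in $\mathbb{R}^2$, $\psi$ extends uniquely to an element of $H^1(\Omega)$. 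Weak $H^1$-convergence preserves traces on the portions of $\partial\Omega$ separated from $\{0\}$, so $\psi\in H^1_D(\Omega)$.

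For the eigenvalue equation, any $v\in H^1_D(\Omega)$ vanishing in a neighbourhood of $\{0\}$ can be regarded as an element of $H^1_{D_n}(\Omega_{\Sigma_{t_n}})$ for $n$ large, and the combination of weak $H^1$-convergence of $\psi_n$ on the support of $v$ with $E_n\to E$ lets me pass to the limit in $\gotq(\psi_n,v)=E_n\gotN(\psi_n,v)$ to obtain $\gotq(\psi,v)=E\gotN(\psi,v)$. A standard logarithmic cut-off $\chi_\varepsilon$ (which tends to $1$ in $H^1$ because $\{0\}$ has zero capacity) shows such $v$ are dense in $H^1_D(\Omega)$, so $\psi$ is a weak eigenfunction of $q$ on $\Omega$ with eigenvalue $E$, and the $C^j$-convergence on $\Omega\setminus U$ will then follow from interior and Lipschitz-boundary elliptic regularity applied to $\Delta\psi_n=-E_n\psi_n$ (no slit present in $\Omega\setminus U$ for large $n$), together with Arzel\`a--Ascoli and the $L^2$-limit.

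The main obstacle is the global $L^2$ convergence, equivalently ruling out $L^2$-mass concentration at $\{0\}$ (without which $\psi$ might be identically zero). Working in the elliptic coordinates of \S\ref{SectionProblem}, I decompose $\psi_n=\bar\psi_n(r)+\tilde\psi_n(r,\theta)$ into its $\theta$-average and oscillation. Using \refeq{Gradient} and \refeq{Lebesgue}, the Poincar\'e--Wirtinger inequality on $S^1$ applied to the $\theta$-derivative term in $q(\psi_n)$ yields
\[
 \int_{B(0,R)}|\tilde\psi_n|^2\,dm\;\le\;(R^2+t_n^2)\,q(\psi_n)\;=\;O(R^2)
\]
uniformly in $n$, the key point being that the $\theta$-derivative enters $q$ with weight $1/\sqrt{r^2+t_n^2}$ which exactly gains a factor $R$ against the area weight $\sqrt{r^2+t_n^2}$ over the ball of radius $R$. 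The radial mean $\bar\psi_n$ is handled by extracting from strong $L^2$-convergence on the shell $\{R/2\le r\le R\}$ a uniform bound on $\bar\psi_n$ there, then propagating inward via the fundamental theorem of calculus using the uniform bound on $\int\sqrt{r^2+t_n^2}|\partial_r\bar\psi_n|^2\,dr$ from $q(\psi_n)$. These estimates together yield $\int_{B(0,R)}|\psi_n|^2\,dm\to 0$ uniformly in $n$ as $R\to 0$, which combined with $L^2_{\mathrm{loc}}$-convergence outside every such ball gives strong $L^2(\Omega)$-convergence and in particular $\|\psi\|_{L^2}=1$, so that $\psi$ is a genuine eigenfunction.
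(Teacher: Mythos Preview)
Your argument is correct and reaches the same conclusion, but the organization differs from the paper's in two places worth noting.

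For ruling out mass concentration at the slit, the paper proves a quantitative non-concentration lemma (Lemma~\ref{NonConcentration}) directly for the full eigenfunction: one compares $|\psi_n(r,\theta)|$ with $|\psi_n(r+\rho,\theta)|$ for a \emph{fixed} shift $\rho$, so that Cauchy--Schwarz produces a factor $\rho$ rather than a logarithm, and smallness comes from the measure ratio between the small ellipse and its $\rho$-translate. Your route---splitting off the angular mean, Poincar\'e--Wirtinger for the oscillation, and the fundamental theorem of calculus for $\bar\psi_n$---also works, but be aware that propagating $\bar\psi_n$ from the shell $\{R/2\le r\le R\}$ inward introduces the factor $\int_r^R (s^2+t_n^2)^{-1/2}\,ds$, whose pointwise supremum diverges like $\ln(R/t_n)$. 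You must therefore integrate it against the area weight $\sqrt{r^2+t_n^2}\,dr$ (which absorbs the logarithm and yields $O(R^2)$), \emph{and} exploit that $\int_{\text{shell}}|\psi_n|^2\to\int_{\text{shell}}|\psi|^2$ can be made small by shrinking $R$, so that the shell contribution times $R^2$ actually tends to zero. The paper's version is slightly more self-contained in that it uses only the uniform $H^1$ bound and never the limit $\psi$.

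For showing that the limit is an eigenfunction on $\Omega$, the paper argues distributionally: $\Delta'\psi=E\psi+C\,\delta_0$ for some constant $C$, and then rules out $C\neq 0$ by comparison with $\log|x|$, whose Dirichlet integral is infinite while that of $\psi$ is finite. Your capacity argument---density in $H^1_D(\Omega)$ of test functions vanishing near $0$, via a logarithmic cut-off---is the standard modern packaging of exactly the same fact and is arguably cleaner; it also makes the extension $\psi\in H^1(\Omega)$ immediate. The $C^j$ convergence is obtained in both proofs by interior elliptic regularity (G\aa rding) plus Sobolev embedding; the paper establishes it first and you last, which is immaterial.
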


\begin{proof}
Let $U$ be a neighborhood of the slit. Each $\psi_n$ is an
eigenfunction of $\Delta$
with eigenvalue $E_n$, and hence for each $n$
such that $\Sigma_{t_n} \subset U$ we have 
\[  \int_{\Omega \setminus U} 
    \left|\psi_n \cdot \Delta^j \psi_n \right|~ dm~
   =~   E_n^j  \int_{\Omega \setminus U} |\psi_n|^2~ dm.
\]
Thus, using G\aa rding's inequality and the 
fact that $\psi_n$ is normalized, we find that
\[      \| \psi_n \|_j~ \leq~ C \cdot ( E_n^k+1) \] 
where  $\| \cdot \|_j$ is the norm associated to
the Sobolev space $H^j(\Omega \setminus U)$. 
Since by assumption, $E_n$ is a bounded sequence, we have that 
for each $j$, the sequence $n \mapsto \| \psi_n \|_j$ 
is bounded. 
 
Using a diagonalization argument, we find a function 
$\psi$ and a subsequence of $\psi_n$---still denoted $\psi_n$---such 
that for every $j$ and every neighborhood $U$
of the slit, $\psi_n$ converges to $\psi$ in the  
$H^j(\Omega \setminus U)$ norm. Thus, by the 
Sobolev embedding theorem, $\psi_n$ converges
to $\psi$ in $C^k(\Omega \setminus U)$ for every 
$k \in {\mathbb N}$.

By assumption, the $\psi_n$ have $L^2$-norm equal to 1. 
Thus, given $\epsilon >0$, Lemma \ref{NonConcentration} below
applies to give $N_1>0$ and $r^*$ so that if $n >N_1$, then 
\[   \int_B |\psi_n|^2~ dm~ \leq~ \frac{\epsilon}{6}. \]
where $B$ is some fixed ball centered at $0$ and included in the neighborhood 
$U_{t_n,r*}$ for $n>N_1$. 
Hence, by Fatou's Lemma,  we also have 
\[   \int_B |\psi|^2~ dm~ \leq~ \frac{\epsilon}{6}. \]
The functions $\psi_n$ converge uniformly to $\psi$ on the complement
of $B$, and hence there exists $N_2$ so that if $n>N_2$ then 
\[  \int_{\Omega \setminus U} |\psi-\psi_n|^2~ dm~ <~ \frac{\epsilon}{3}. \]
It follows that  $\psi_n$ converges to $\psi$ in $L^2(\Omega)$.
In particular, $\psi$ is $L^2$-normalized and hence 
is nontrivial. 

Finally, we show that $\psi$ is an eigenfunction. To do this we adapt 
an argument from \cite{CdV81}.  Define the distribution 
$T$ by $T(\phi) = \int_{\Omega} \psi \cdot \phi~ dm$. 
Then $\Delta' T \in H^{-2}(\Omega)$ where $\Delta'$ denotes
the distributional Laplacian. If the 
support of $\phi$ does not contain the origin, then 
$\Delta' T(\phi) = E \cdot T(\phi)$, and hence the 
singular support of $\Delta' T-ET$ is contained in $\{0\}$. 
Hence there exists $S \in H^{-2}(\Omega)$ with $\supp(S) \subset \{0\}$
such that 
\[ \Delta' T~ =~ E \cdot T~ +~ S. \]
It follows that there exists $C$ such that $S= C \cdot \delta$ 
where $\delta(\phi)= \phi(0)$. 

Let $G$ be the distribution defined by 
$G(\phi) = \int_{\Omega} L \cdot \phi~ dm$ where
$L(x,y)= \ln(\sqrt{x^2+ y^2})\chi(x,y)$ where $\chi$ is some smooth cut-off function 
near $0$ (say for instance in the ball $B$).
A direct computation shows that $\Delta' G - \delta$ is in $L^2(\Omega)$ and hence it follows from
above that $\Delta'(T-C \cdot G)$ is also in $L^2(\Omega)$ and hence, 
by elliptic regularity, $T-C \cdot G \in H^2(\Omega).$ 

It suffices to show that $C=0$.  On the one hand, by Fatou's Lemma, 
we have  
\[
  \int_{\Omega}  |\nabla \psi|^2~ dm~ \leq~ 
     \liminf_{n \rightarrow \infty}   \int_{\Omega}  |\nabla \psi_n |^2~ dm~ =~ E,
\]
and on the other hand 
\[ 
    \int_{\Omega}  |\nabla L|^2~ dm~  \geq~
   2 \pi  \int_0^{\epsilon} \left(\partial_r \ln(r)\right)^2~  r~ dr ~ =~ \infty \]
where $B(0, \epsilon) \subset \Omega$ is a ball centered at the origin.   
Therefore, $T-C \cdot G \notin H^1(\Omega)$ unless $C=0$. 
\end{proof}

\begin{lem}\label{NonConcentration}
Given $\epsilon >0$ and $E_0>0$, there exists $r^*>0$ such that 
if $u$ is an eigenfunction of $q$ on 
$H^1_D(\Omega_{\Sigma_{t}})$ with eigenvalue $E \leq E_0$, 
then for all $t < r^*$ we have
\[ \int_{U_{t,r^*}} |u|^2~ dm~ \leq~  
    \epsilon \int_{\Omega} |u|^2~ dm \]
where $U_{t,r^*}$ is the elliptical neighborhood of the
slit $\Sigma_t$ of radius $r^*$. 
\end{lem}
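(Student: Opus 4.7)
The plan is to combine a uniform estimate on the Lebesgue measure of the elliptical neighborhood $U_{t,r^*}$ with the two-dimensional Sobolev embedding $H^1 \hookrightarrow L^4$, controlling the $H^1$ norm of the eigenfunction by its $L^2$ norm via the eigenvalue equation.

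First, I would estimate $|U_{t,r^*}|$ using the measure expression (\ref{Lebesgue}). The integrand
$$\frac{r^2+t^2\sin^2\theta}{\sqrt{r^2+t^2}} \leq \sqrt{r^2+t^2} \leq \sqrt{2}\,(r+t)$$
is bounded by $2\sqrt{2}\,r^*$ whenever $t \leq r^*$ and $r \leq r^*$. Hence
$$|U_{t,r^*}| \leq C\,(r^*)^2$$
for a universal constant $C$, uniformly in $t \leq r^*$.

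Next, Hölder's inequality gives
$$\int_{U_{t,r^*}} |u|^2\, dm \leq |U_{t,r^*}|^{1/2}\,\|u\|_{L^4(U_{t,r^*})}^2 \leq C\,r^*\,\|u\|_{L^4(\Omega_{\Sigma_t})}^2.$$
The two-dimensional Sobolev embedding on the Lipschitz manifold-with-boundary $\Omega_{\Sigma_t}$ yields $\|u\|_{L^4}^2 \leq K\,\|u\|_{H^1}^2$, and the eigenvalue equation gives $\|\nabla u\|_{L^2}^2 = E\,\|u\|_{L^2}^2 \leq E_0\,\|u\|_{L^2}^2$, so $\|u\|_{H^1}^2 \leq (1+E_0)\,\|u\|_{L^2}^2$. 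Combining,
$$\int_{U_{t,r^*}} |u|^2\, dm \leq C\,K\,(1+E_0)\,r^* \int_\Omega |u|^2\, dm,$$
and choosing $r^* \leq \epsilon/(CK(1+E_0))$ gives the claim.

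The main obstacle is ensuring that the Sobolev embedding constant $K$ can be taken independent of $t$ as $t \to 0$. This is where the elliptical coordinate chart $\phi_t$ does the key work: outside a fixed neighborhood of the slit the domain is independent of $t$, while inside the chart the distortion between the flat metric on $[0,2r_0) \times S^1$ and the pullback metric is quantitatively controlled by formulas (\ref{Gradient})--(\ref{Lebesgue}), with constants depending only on $r_0$. One can therefore cover $\Omega_{\Sigma_t}$ by finitely many Lipschitz charts whose bi-Lipschitz constants are uniformly bounded in $t \in (0, r_0)$, and then the standard extension/embedding constants for $H^1 \hookrightarrow L^4$ inherit this uniformity.
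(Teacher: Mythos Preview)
Your approach via H\"older and the Sobolev embedding $H^1 \hookrightarrow L^4$ is genuinely different from the paper's, which uses an elementary translation argument in the radial elliptical coordinate together with Cauchy--Schwarz: one compares $|u(r,\theta)|^2$ to $|u(r+\rho,\theta)|^2$ plus a gradient remainder, then exploits the explicit formulas (\ref{Gradient}) and (\ref{Lebesgue}) to show both pieces are small. That argument never invokes any Sobolev inequality and is entirely self-contained.

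Your strategy would work, but the justification you give for the uniform Sobolev constant has a gap. The elliptical chart $\phi_t$ is \emph{not} bi-Lipschitz between $([0,2r_0)\times S^1,\,dr^2+d\theta^2)$ and its image: from (\ref{Lebesgue}) the Jacobian $\frac{r^2+t^2\sin^2\theta}{\sqrt{r^2+t^2}}$ vanishes at the slit endpoints $(r,\theta)=(0,0),(0,\pi)$ and is only of order $t$ along the entire slit boundary, so the distortion is not controlled by $r_0$ alone. A simple repair: extend the slit to the full line $\{y=0\}$ and restrict $u$ to the two fixed Lipschitz pieces $\Omega_\pm=\Omega\cap\{\pm y>0\}$ (or, to avoid any regularity issue where $\{y=0\}$ meets $\partial\Omega$, first multiply by a cutoff supported in $B(0,2r_0)$ and split that ball into half-disks). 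On each half the standard two-dimensional Sobolev embedding applies with a constant independent of $t$, and the two estimates recombine to give the uniform $K$ you need. With this fix your argument is correct and arguably shorter than the paper's; the paper's route has the advantage of being completely elementary and of producing the constants directly from the metric formulas without appealing to extension or embedding theorems.
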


\begin{proof}
For positive $r$ and $\rho$, we have 
\begin{eqnarray*} |u(r+\rho, \theta)-u(r, \theta)|~ 
  & \leq & \int_{0}^{\rho} \left| \partial_ru(r+s, \theta) \right|~ ds \\
  & \leq & \sqrt{\rho} \cdot 
\left( \int_{0}^{\rho}
 \left| \partial_ru(r+s, \theta) \right|^2~ ds \right)^{\frac{1}{2}}
\end{eqnarray*}
by the Cauchy-Schwarz inequality. From this we find that
\begin{equation} \label{AfterCauchy}
 \frac{1}{2} \cdot  |u(r, \theta)|^2~ 
   \leq~  |u(r+ \rho, \theta)|^2~  +~
  \rho \int_{0}^{\rho} \left| \partial_ru(r+s, \theta) \right|^2~ 
  ds. 
\end{equation}
Given $r_1>0$, define $U_s= \{r~ |~ s \leq r \leq r_1+s\} \times S^1$.
Using  (\ref{Gradient}) and (\ref{Lebesgue}), we find that if $s \geq 0$
\begin{eqnarray*}  
 \int_{U_0} \left|\partial_r u(r+ s, \theta) \right|^2~  dm~
    &\leq& \int_{U_0} \left|\partial_r u(r+ s, \theta) \right|^2~
            (r^2+ t^2)^{\frac{1}{2}}~ d \theta dr~  \\
     &\leq&  \int_{U_0}|\partial_r u(r+ s, \theta)|^2~
            ((r+s)^2+ t^2)^{\frac{1}{2}}~ d \theta dr~  \\
    &=& \int_{U_s}|\partial_r u(r, \theta)|^2~
            (r^2+ t^2)^{\frac{1}{2}}~ d \theta dr. \\
    &\leq& \int_{U_s} |\nabla u(r, \theta)|^2~ dm.
\end{eqnarray*}
Thus, since $u$ is an eigenfunction
with eigenvalue $E$, 
\begin{equation} \label{GradientPart}
 \rho \int_{0}^{\rho} \int_{U_0} \left| \partial_ru(r+s, \theta) \right|^2~ dm~ ds~
 \leq~   \rho^2 E \int_{\Omega} |u|^2~ dm. 
\end{equation}

Fix some $\rho$ so that $\rho^2 E \leq \epsilon/4$,
and choose $r^*\leq \rho $.   
Uniformly for $r,t \leq r^*$, we have then 
$$
\frac{r^2+t^2\sin^2 \theta}{(r^2+t^2)^{\frac{1}{2}}},\leq\, (r^2+t^2)^{\frac{1}{2}}\,\leq\, r^*
$$
and
$$
\frac{(r+\rho)^2+t^2\sin^2\theta}{(r+\rho)^2+t^2)^\frac{1}{2}}\,
\geq \, 
\frac{(r+\rho)^2}{(2(r+\rho)^2)^{\frac{1}{2}}}
\geq \frac{\rho}{\sqrt2}
$$
%
%
%
from which it follows that
\[   \frac{r^2+ t^2 \sin^2 \theta}{(r^2+ t^2)^{\frac{1}{2}}}~
 \leq~ \frac{ r^*\sqrt{2}}{\rho} \cdot
  \frac{(r+\rho)^2+ t^2 \sin^2 \theta}{((r+\rho)^2+ t^2)^{\frac{1}{2}}}.
\]
We now fix $r^*\leq \frac{\ep \rho}{\sqrt{32}}$ so that we have
\[   \frac{r^2+ t^2 \sin^2 \theta}{(r^2+ t^2)^{\frac{1}{2}}}~
 \leq~ \frac{\ep}{4} \cdot
  \frac{(r+\rho)^2+ t^2 \sin^2 \theta}{((r+\rho)^2+ t^2)^{\frac{1}{2}}}
\]
uniformly for $r,t\leq r^*.$ 

Thus,  using (\ref{Gradient}) and (\ref{Lebesgue}), we have
\begin{eqnarray*}   \int_{U_0} |u(r+ \rho, \theta)|^2~ dm~
    &\leq& \frac{\epsilon}{4} \int_{U_0}|u(r+ \rho, \theta)|^2~
           \frac{(r+\rho)^2+ t^2 \sin^2 \theta}{((r+\rho)^2+ t^2)^{\frac{1}{2}}}~ 
              d \theta dr~  \\
    &=&  \frac{\epsilon}{4} \int_{U_{\rho}} |u(r, \theta)|^2~
       \frac{r^2+ t^2 \sin^2 \theta}{(r^2+ t^2)^{\frac{1}{2}}}~ d \theta dr \\
    &\leq&  \frac{\epsilon}{4} \int_{\Omega} |u(r, \theta)|^2~ dm. 
\end{eqnarray*}
By combining this with (\ref{AfterCauchy}) and (\ref{GradientPart})
we find that 
\begin{eqnarray*} 
 \frac{1}{2} \int_{U_{0}} |u(r, \theta)|^2~ dm~
  &\leq &   \left(\frac{\epsilon}{4}~ +~  \rho^2  E \right)~  
 \int_{\Omega} |u(r, \theta)|^2~ dm.~ 
\end{eqnarray*}
The claim follows since $\rho^2 E \leq \epsilon /4$. 
\end{proof}


\section{Analyticity, variational formulae, and localization}

In this section we use standard perturbation theory 
to show that the eigenvalues and eigenfunctions
depend analytically on $t$ for $t>0$. 
We then derive basic formulae relating 
the derivative of an eigenbranch to the derivatives of the
quadratic forms $q$ and $N$. We show that for small 
$t$ the dominant terms in the derivative fomulae
can be localized to a neighborhood of the slit. 
Finally, we evaluate these local formulae on an
elliptical neighborhood of the slit.

\subsection{Pulling back to elliptical coordinates}

Standard analytic perturbation theory \cite{Kato} applies to a family
of quadratic forms on a {\em fixed} Hilbert space.  For this reason 
we will modify the family quadratic forms $q$ and $N$ but in
such a way that we obtain an equivalent eigenvalue problem.  

Of the various possible approaches, we choose to modify the map
$\phi_t$  so that the inverse image of $\Omega_{\Sigma_t}$ is constant
in $t$. The fixed Hilbert space will then be $L^2$ on this inverse
image with respect to the pull-back of Lebesgue measure.

Let $r_0$ be as in \S \ref{SectionProblem}. Namely, 
$B(\vec{0},2r_0) \subset \Omega$. 
Let $\sigma: {\mathbb R} \rightarrow {\mathbb R}$
be a smooth, positive, decreasing function such that 
$\sigma(t)=1$ if $|t| \leq 1$ and  $\sigma(t)=0$ if $|t| \geq 2$. 
Define 
$\tilde{\phi}_{t}: {\mathbb R}\times S^1 \rightarrow  
{\mathbb R}^2$  by 
  \[  \tilde{\phi}_{t}(r, \theta)~ 
  =~ \left( \sqrt{r^2+ t^2 \cdot \sigma(r/r_0)} 
        \cdot \cos(\theta),~ 
        r\sin(\theta) \right).
\]  
By construction, $\tilde{\phi}_{t} $ defines a smooth diffeomorphism
from $[0, \infty) \times \R$ onto $\R^2_{\Sigma_t}$. 

If $t < r_0$, then the set $M=\tilde{\phi}_{t}^{-1}(\Omega_{t})$ 
does not depend on $t$.  We pull-back the Dirichlet energy functional 
and the $L^2$-norm on $\Omega$ to functionals on $M$.
 
For computational convenience, we express the change 
of variables in the language of Riemannian geometry. 
For each $t$, define the Riemannian metric
\[   g_{t}~ =~ \tilde{\phi}_{t}^*(dx^2+ dy^2) \]
on $[0, \infty) \times S^1$.  In particular, $\tilde{\phi}$
is a Riemannian  isometry from $(\Omega_{t}, dx^2+ dy^2)$ onto
$(M,  g_{t})$.

Let $dm_{t}$ denote the Riemannian measure:
$dm_{t}= (\tilde{\phi}_t)^{-1}_{*}(dx dy)$. The space, $L^2(M)$, of 
functions that are square 
integrable with respect to $dm_{t}$ does not depend
on $t$.  On the other hand, the associated norm does
depend on $t$.  Let $N_t$ denote the quadratic form\footnote{This
quadratic form is defined on functions on $M$, and hence is not
the same as the quadratic form $N$ defined in \S \ref{SectionProblem}.
On the other hand, these forms differ by pull-back by $\phi_t$,
and the context will make clear which is being used.}
\begin{equation} \label{Nt}
   N_{t}(u)~ =~ \int_{M}  |u|^2~ dm_{t}.  
\end{equation}
In the sequel we will let $\gotN_{t}(\cdot, \cdot)$ 
denote the polarization of $N_{t}$.

Let $\nabla_{t}$ denote the Riemannian gradient for $(M, g_t)$.
Namely,  we have $g_{t}(\nabla_{t} u, X)=  du(X)$ for every smooth function
$u: M \rightarrow \R$ and vector field $X$ on $M$. 
For smooth functions $u: M \rightarrow \R$  define 
\begin{equation} \label{q}
  q_{t}(u)~ =~  
\int_{M} g_{t}(\nabla_{t} u, \nabla_{t}u)~ 
    dm_{t}. 
\end{equation}

Given a measurable subset $D$ in the boundary of $M$, 
define $H^1_D(M)$ to be the completion with respect to the norm
\[ 
u \mapsto q_{t}(u)^{\frac{1}{2}} + N_{t}(u)^{\frac{1}{2}}.
\]
of the set of smooth 
functions on $M$ that vanish on $D$.
Since the equivalence class of this norm
is independent of $t>0$, the completion
does not depend on $t$.
The form $q_{t}$ extends to a closed quadratic
form on $H^1_D(M)$.   In the sequel we will let 
$\gotq_{t}(\cdot, \cdot)$ denote the polarization of $q_{t}$. 

\begin{prop} \label{Equivalent}
$\psi$ is an eigenfunction of the
quadratic form $q$ on $H^1_{\phi_t(D)}(\Omega_{\Sigma_t})$ with respect to 
$N$ on $L^2(\Omega_{\Sigma_t})$ with eigenvalue $E$ 
if and only if $u=\tilde{\phi}^*_{t}(\psi)$ is a eigenfunction 
of $q_t$ on $H^1_D(M)$ with respect to $N_t$ on $L^2(M)$
with eigenvalue $E$. 
\end{prop}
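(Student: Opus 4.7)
The plan is to observe that the map $\tilde{\phi}_t$ furnishes, by construction, a Riemannian isometry between $(M,g_t)$ and the slit domain $(\Omega_{\Sigma_t}, dx^2+dy^2)$, so the proposition is an instance of the general fact that the eigenvalue problem for a self-adjoint operator defined by a quadratic form is invariant under unitary isomorphisms induced by isometries.

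First I would record that $\tilde{\phi}_t$ extends to a smooth diffeomorphism from $[0,\infty)\times S^1$ onto $\R^2_{\Sigma_t}$ (this was established in the paragraph defining $\tilde{\phi}_t$), so its restriction to $M=\tilde{\phi}_t^{-1}(\Omega_{\Sigma_t})$ is a diffeomorphism onto $\Omega_{\Sigma_t}$. Because $g_t$ is defined to be $\tilde{\phi}_t^*(dx^2+dy^2)$, the map is a Riemannian isometry. Consequently, for every smooth $\psi$ on $\Omega_{\Sigma_t}$ and $u=\tilde{\phi}_t^*\psi$, the change-of-variables formula yields
\[
N(\psi)\;=\;\int_{\Omega_{\Sigma_t}} |\psi|^2\,dm\;=\;\int_M |u|^2\,dm_t\;=\;N_t(u),
\]
and the isometry property of $\tilde{\phi}_t$ gives $g_t(\nabla_t u,\nabla_t u)=|\nabla \psi|^2\circ\tilde{\phi}_t$, so the same change of variables gives $q(\psi)=q_t(u)$. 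Polarizing, the same identities hold for the bilinear forms $\gotN$, $\gotN_t$ and $\gotq$, $\gotq_t$.

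Next I would verify that $\tilde{\phi}_t^*$ restricts to a bijection between $H^1_{\tilde{\phi}_t(D)}(\Omega_{\Sigma_t})$ and $H^1_D(M)$. Since $\tilde{\phi}_t$ is a smooth diffeomorphism that maps $D$ onto $\tilde{\phi}_t(D)$, pullback carries smooth functions vanishing on $\tilde{\phi}_t(D)$ to smooth functions vanishing on $D$, and the equality of norms
\[
q(\psi)^{1/2}+N(\psi)^{1/2}\;=\;q_t(u)^{1/2}+N_t(u)^{1/2}
\]
shows that $\tilde{\phi}_t^*$ extends to an isometric isomorphism of the two completions.

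With these identifications, the eigenvalue equation $\gotq(\psi,v)=E\cdot\gotN(\psi,v)$ for every $v\in H^1_{\tilde{\phi}_t(D)}(\Omega_{\Sigma_t})$ is, letting $w=\tilde{\phi}_t^*v$ range over $H^1_D(M)$, equivalent to $\gotq_t(u,w)=E\cdot\gotN_t(u,w)$ for every $w\in H^1_D(M)$, which is exactly the eigenvalue equation for $u$ with eigenvalue $E$. This gives the two directions of the equivalence simultaneously. The only mild subtlety — and the one worth pausing on — is ensuring that the pull-back correctly identifies the form domains across the boundary (in particular along the slit, where the geometry is singular in $\Omega_{\Sigma_t}$ but smooth in $M$); this is handled by the fact that $\tilde{\phi}_t$ is already a smooth diffeomorphism of manifolds with Lipschitz boundary, so the $H^1_D$ spaces are defined intrinsically and transform naturally.
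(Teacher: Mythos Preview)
Your proposal is correct and takes essentially the same approach as the paper: the paper's proof consists of the single sentence ``This follows from a straightforward accounting of the change of variables given by $\tilde{\phi}_t$,'' and your argument is precisely a careful unpacking of that change of variables via the Riemannian isometry $\tilde{\phi}_t:(M,g_t)\to(\Omega_{\Sigma_t},dx^2+dy^2)$.
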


\begin{proof}
This follows from a straightforward 
accounting of the change of variables given by $\tilde{\phi}_t$.
\end{proof}

\subsection{Analyticity of eigenvalues and eigenfunctions}

\begin{prop} \label{Analytic}
The eigenvalues and eigenfunctions of $q_t$ with respect to $N_t$
vary real-analytically. To be precise: 
$\forall k \in {\mathbb N}$,  $\exists$
real-analytic paths $\psi_k: (0, t_0]
\rightarrow H^1_D(M)$ and
$E_k:(0, t_0] \rightarrow \R$ such that

\vspace{.2cm}

\begin{enumerate} 
\renewcommand{\labelenumi}{(\alph{enumi})}
\item $\gotq_{t} \left(\psi_k(t),v \right) =
    E_{k}(t) \cdot \gotN_{t} \left(\psi_k(t), v \right)$ 
  for all     $v \in H^1_D(M)$.

\vspace{.3cm}

\item $N_{t}(\psi_k(t))=1$, and  

\vspace{.3cm}

\item the span of $\{ \psi_k(t)~ |~  k \in {\mathbb N} \}$ 
      is dense in $L^2(M)$.
\end{enumerate}  
\end{prop}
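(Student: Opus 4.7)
The plan is to recast the problem as an analytic family of self-adjoint operators on a \emph{fixed} Hilbert space with a \emph{fixed} form domain, and then invoke Kato's perturbation theorem for analytic families of type (B). Although $q_t$ and $N_t$ are already defined on functions on the fixed manifold $M$, the inner product $\gotN_t$ depends on $t$ through the Riemannian volume $dm_t$, so Kato's hypotheses are not directly satisfied. I remove this $t$-dependence by an isometric change. Fix $t_* \in (0, t_0]$ and let $\rho_t$ be the smooth positive density on $M$ defined by $dm_t = \rho_t \cdot dm_{t_*}$. Then $U_t : L^2(M, dm_t) \to L^2(M, dm_{t_*})$ given by $U_t u = \rho_t^{1/2} u$ is a unitary isomorphism that restricts to a bounded bijection of $H^1_D(M)$ onto itself. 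Define
\[
\tilde{q}_t(v) = q_t\!\left( U_t^{-1} v \right), \qquad \tilde{N}(v) = N_{t_*}(v).
\]
Now $\tilde{N}$ is independent of $t$, and $v$ is an eigenvector of $\tilde{q}_t$ relative to $\tilde{N}$ with eigenvalue $E$ if and only if $\psi = U_t^{-1} v$ is an eigenfunction of $q_t$ relative to $N_t$ with the same eigenvalue.

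The second step is to verify that $t \mapsto \tilde{q}_t$ is a real-analytic family of closed, symmetric, sectorial forms on the fixed domain $H^1_D(M)$. For $t>0$ the expression $\sqrt{r^2 + t^2 \sigma(r/r_0)}$ has a strictly positive argument, so it and its derivatives in $r$ and $\theta$ extend analytically to a complex neighborhood of any point of $(0, t_0]$. Consequently the components of $g_t$, of its inverse, and of $\rho_t$ are real-analytic functions of $t$ taking values in bounded smooth functions on $M$. Inserted into the defining integral for $\tilde{q}_t$, this shows that $(t,v,w) \mapsto \tilde{\gotq}_t(v,w)$ is a real-analytic family of bounded symmetric bilinear forms on $H^1_D(M)$, bounded below by a positive multiple of $\tilde{N}$ uniformly on compact subsets of $(0, t_0]$. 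This places the family in Kato's class of analytic families of type (B), so the associated self-adjoint operators $T_t$ on $\mathcal{H} = L^2(M, dm_{t_*})$ form a self-adjoint analytic family in the sense of \cite{Kato}.

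The third step applies Kato's theorem (\cite{Kato}, Chapter VII). Since $M$ is a compact Lipschitz manifold, the inclusion $H^1_D(M) \hookrightarrow \mathcal{H}$ is compact by Rellich, so each $T_t$ has compact resolvent and purely discrete spectrum. Kato's theorem then yields real-analytic branches $E_k(t)$ and $\tilde{\psi}_k(t) \in H^1_D(M)$ with $\tilde{N}(\tilde{\psi}_k(t)) = 1$ such that for each $t$ the collection $\{\tilde{\psi}_k(t)\}_{k \in \mathbb{N}}$ is a complete orthonormal basis of $\mathcal{H}$. Setting $\psi_k(t) = U_t^{-1} \tilde{\psi}_k(t) = \rho_t^{-1/2}\, \tilde{\psi}_k(t)$ produces real-analytic paths in $H^1_D(M)$, and (a), (b), (c) translate immediately back through $U_t$: the eigenvalue equation is preserved by construction, the normalization $\tilde{N}(\tilde{\psi}_k(t))=1$ becomes $N_t(\psi_k(t))=1$, and completeness in $\mathcal{H}$ gives completeness in $L^2(M, dm_t)$ because $U_t$ is unitary.

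The main technical point is the verification of the analytic dependence of the metric $g_t$ and the density $\rho_t$ near the slit locus $\{r=0\}$, where $\tilde{\phi}_t$ degenerates (its Jacobian vanishes). This is only a mild obstacle: the potentially singular factor is $\sqrt{r^2 + t^2 \sigma(r/r_0)}$, whose radicand stays bounded below by $\min(r^2, (t/2)^2) > 0$ for $t > 0$ on $M$, so the square root remains real-analytic jointly in $(r,\theta,t)$ on $M \times (0, t_0]$. Once this is recorded, the rest of the verification is routine bookkeeping of the change of variables.
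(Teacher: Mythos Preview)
Your reduction to a fixed $L^2$-space via conjugation by $U_t=\rho_t^{1/2}$ is a natural idea, but the claim that ``the components of $g_t$, of its inverse, and of $\rho_t$ are real-analytic functions of $t$ taking values in bounded smooth functions on $M$'' is false at the endpoints of the slit, and this is not the mild obstacle you describe. In the chart near the slit one has
\[
dm_t~=~\frac{r^2+t^2\sin^2\theta}{\sqrt{r^2+t^2}}\,dr\,d\theta,
\qquad
\rho_t~=~\frac{dm_t}{dm_{t_*}}~=~\frac{r^2+t^2\sin^2\theta}{\,r^2+t_*^2\sin^2\theta\,}\cdot\frac{\sqrt{r^2+t_*^2}}{\sqrt{r^2+t^2}}.
\]
You correctly note that $\sqrt{r^2+t^2}$ is harmless for $t>0$, but you overlook the factor $r^2+t^2\sin^2\theta$, which vanishes at the corners $(r,\theta)=(0,0)$ and $(0,\pi)$ for \emph{every} $t$. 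Along $\theta=0$ the first factor of $\rho_t$ tends to $1$, whereas along $r=0$ it tends to $t^2/t_*^2$; hence $\rho_t$ is not even continuous at the corners, let alone smooth, and $g_t^{-1}$ is unbounded there. Consequently $U_t$ is not obviously a bounded automorphism of $H^1_D(M)$, and the transformed form $\tilde q_t(v)=q_t(\rho_t^{-1/2}v)$ picks up a potential term $\int |v|^2\,|\nabla_t\rho_t^{-1/2}|_{g_t}^2\,dm_t$ with a genuine $1/s$-type singularity at each corner; for Neumann data on the slit there is no reason $v$ should vanish there, so this is not routine bookkeeping.

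The paper sidesteps the whole issue: rather than forcing $N_t$ to be constant, it keeps both $q_t$ and $N_t$ as holomorphic families of type~(a) on the fixed domain $H^1_D(M)$ and invokes Kato's treatment of the \emph{generalized} eigenvalue problem (\S VII.6, especially Remark VII.6.2 and the discussion on p.~419). This works because, although $g_t^{-1}$ and $dm_t$ separately degenerate at the corners, the combinations $g_t^{ij}\sqrt{|g_t|}$ and $\sqrt{|g_t|}$ that actually appear as integrands in $q_t$ and $N_t$ are $\sqrt{r^2+t^2}$, $1/\sqrt{r^2+t^2}$, and $(r^2+t^2\sin^2\theta)/\sqrt{r^2+t^2}$, all of which are bounded and real-analytic in $t$ for $t>0$. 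Your argument can be repaired most cleanly by dropping the $U_t$-conjugation and citing this part of Kato directly.
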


\begin{proof}
Apply standard analytic perturbation theory. 
In particular, both $q_{t}$ and $N_{t}$ are holomorphic 
families of quadratic forms of 
type (a) as in \S VII.4 \cite{Kato}.
The eigenvalue problem is of  generalized form 
as in \S VII.6 \cite{Kato}. 
(See especially Remark VII.6.2 \cite{Kato} and the 
discussion on page 419.)
The analogue of Remark VII.4.22 \cite{Kato} gives the claim.  
\end{proof}

\begin{remk} 
We will use the expression {\em eigenbranch} to 
designate the mapping $t\rightarrow (\psi_k(t),E_k(t))$ 
and will use the expression 
{\em normalized eigenbranch} if, in addition, 
 $N_{t}\left(\psi_k(t)\right)=1.$ 
By extension, the term {\em eigenbranch} may also refer to 
either the eigenvalue or the eigenvector singly. Since we will 
be dealing with one eigenbranch at a time, the index $k$ will 
be systematically dropped. 
\end{remk}  

\begin{coro}
The Neumann (resp. Dirichlet) 
eigenvalues of the Laplacian on $\Omega_{\Sigma_t}$
depend analytically on $t \in (0, t_0]$.
\end{coro}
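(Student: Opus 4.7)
The plan is to deduce the corollary directly from the two preceding results, Proposition \ref{Equivalent} and Proposition \ref{Analytic}. The essential observation is that the pull-back $\tilde{\phi}_t^*$ intertwines the two eigenvalue problems, so analytic behavior on the fixed Hilbert space $L^2(M)$ immediately transfers to the moving family on $\Omega_{\Sigma_t}$.

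Concretely, I would proceed as follows. First, I recall that by the discussion of $\S \ref{SectionProblem}$, the Neumann (resp.\ Dirichlet) eigenvalues of the Laplacian on $\Omega_{\Sigma_t}$ are exactly the eigenvalues $E$ such that there exists a nontrivial $\psi\in H^1_D(\Omega_{\Sigma_t})$ (with $D=\emptyset$ in the Neumann case, $D=\partial\Omega_{\Sigma_t}$ in the Dirichlet case) satisfying $\gotq(\psi,v)=E\cdot\gotN(\psi,v)$ for every $v\in H^1_D(\Omega_{\Sigma_t})$. By Proposition \ref{Equivalent}, this condition is equivalent to $u=\tilde{\phi}_t^*\psi$ being a nontrivial element of $H^1_D(M)$ that satisfies $\gotq_t(u,w)=E\cdot\gotN_t(u,w)$ for all $w\in H^1_D(M)$; note that the correspondence $\psi\leftrightarrow u$ is a bijection, so it preserves multiplicities as well. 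Hence the two spectra coincide, counted with multiplicity.

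Next, I invoke Proposition \ref{Analytic}: there are real-analytic functions $E_k:(0,t_0]\to\mathbb{R}$ parametrizing, for each $t$, the eigenvalues of $q_t$ with respect to $N_t$. Combined with the previous paragraph, for every $t\in(0,t_0]$ the collection $\{E_k(t)\}_k$ coincides (with multiplicities) with the set of Neumann (resp.\ Dirichlet) eigenvalues of the Laplacian on $\Omega_{\Sigma_t}$. Since each $E_k$ is real-analytic on $(0,t_0]$, this proves that the eigenvalues of the Laplacian on $\Omega_{\Sigma_t}$ depend analytically on $t$ in the sense stated.

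There is no real obstacle; the only point worth a sentence of care is ensuring that the enumeration supplied by Proposition \ref{Analytic} captures \emph{all} eigenvalues, which is guaranteed by item (c) of that proposition (density of the span of the $\psi_k(t)$ in $L^2(M)$). This completeness, together with the bijectivity of $\tilde{\phi}_t^*$, yields that no Laplace eigenvalue is missed, and the corollary follows.
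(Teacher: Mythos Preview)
Your proof is correct and follows exactly the same approach as the paper, which simply says to combine Proposition~\ref{Equivalent}, Proposition~\ref{Analytic}, and the discussion in \S\ref{SectionProblem}. You have merely spelled out the details, including the use of item~(c) to ensure completeness of the enumeration.
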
 

\begin{proof} 
 Combine Proposition \ref{Equivalent}, Proposition
 \ref{Analytic}, and the discussion in \S \ref{SectionProblem}.
\end{proof}

\subsection{Variational formulae}

\begin{nota}
A dot above a quantity will
indicate differentiation with respect to $t$. 
For example, $\dot{E}$ indicates the first derivative
of an eigenvalue branch $E_t$. In what follows,   
we will often suppress the dependence of $q$, 
$\psi$, and $E$ on $t$ from the notation.
\end{nota} 

We begin with a well-known, general variational formula.
\begin{prop} \label{Variation}
We have 
\[ \dot{E} \cdot N(\psi)~ 
     =~ \dot{q}(\psi)~ -~ E \cdot \dot{N}(\psi). \]
\end{prop}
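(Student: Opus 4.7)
The plan is to prove this as a Feynman--Hellmann type identity by differentiating the Rayleigh relation along the eigenbranch. A preliminary notational observation: since both $q_t$ and $N_t$ depend on $t$ and are evaluated on the $t$-dependent eigenfunction $\psi_t$, the dots in $\dot{q}(\psi)$ and $\dot{N}(\psi)$ can only refer to differentiation of the forms in the parameter $t$ with the argument $\psi = \psi_t$ frozen; otherwise the identity becomes a tautology. This is the standard reading, and the point of the formula is precisely that the ``cross'' contributions coming from the $t$-dependence of $\psi_t$ drop out.

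First, I would set $v = \psi_t$ in the eigenvalue identity from Proposition \ref{Analytic}(a) to obtain
\[
    q_t(\psi_t)~ =~ E_t \cdot N_t(\psi_t).
\]
Proposition \ref{Analytic} guarantees that $t \mapsto \psi_t \in H^1_D(M)$ and $t \mapsto E_t$ are real-analytic on $(0,t_0]$, so both sides may be differentiated in $t$. Using the chain rule and the symmetry of the polarizations $\gotq_t$ and $\gotN_t$,
\[
    \dot{q}(\psi)~ +~ 2\, \gotq_t(\psi_t, \dot{\psi}_t)~ =~ \dot{E}\cdot N_t(\psi_t)~ +~ E_t \cdot \dot{N}(\psi)~ +~ 2 E_t \cdot \gotN_t(\psi_t, \dot{\psi}_t).
\]

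The second step is to eliminate the cross terms. Because $\dot{\psi}_t \in H^1_D(M)$ by the real-analyticity asserted in Proposition \ref{Analytic}, it is an admissible test function in part (a) of that proposition, which gives
\[
    \gotq_t(\psi_t, \dot{\psi}_t)~ =~ E_t \cdot \gotN_t(\psi_t, \dot{\psi}_t).
\]
Substituting this into the previous display cancels the two terms containing $\dot{\psi}_t$, leaving
\[
    \dot{q}(\psi)~ =~ \dot{E}\cdot N(\psi)~ +~ E \cdot \dot{N}(\psi),
\]
which is the desired identity after rearrangement. There is no real obstacle beyond bookkeeping: one only needs the convention for $\dot{q}$ and $\dot{N}$ to be consistent with differentiating the form alone, and the fact that $\dot{\psi}_t$ is an admissible test function, both of which are supplied by the analyticity framework of Proposition \ref{Analytic}.
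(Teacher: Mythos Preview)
Your proof is correct and follows essentially the same Feynman--Hellmann argument as the paper: both use the test-function identity $\gotq_t(\psi_t,\dot\psi_t)=E_t\,\gotN_t(\psi_t,\dot\psi_t)$ from Proposition~\ref{Analytic}(a) to cancel the cross terms arising from differentiation. The only cosmetic difference is that the paper differentiates the bilinear relation $\gotq_t(\psi_t,v)=E_t\,\gotN_t(\psi_t,v)$ and then sets $v=\psi_t$, whereas you first set $v=\psi_t$ and then differentiate the resulting quadratic identity; the computations are equivalent.
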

\begin{proof}
Substitution of $\dot{\psi}$ for $v$ in part (a) of Proposition \ref{Analytic}
gives
\begin{equation} \label{DotSub}
 \gotq(\psi, \dot{\psi})~ =~ E \cdot \gotN(\psi, \dot{\psi}).
\end{equation}
By differentiating part (a) of Proposition \ref{Analytic}, we obtain
\[  \dot{\gotq}(\psi, v)~ +~ \gotq(\dot{\psi},v)~ =~ \dot{E} \cdot \gotN(\psi,v)~
   +~ E \cdot \left( \dot{\gotN}(\psi,v)~ +~ \gotN(\dot{\psi}, v) \right) \]
for all $v$.   By substituting $\psi$ for $v$ 
and using (\ref{DotSub}), we find that
\[  \dot{q}(\psi, \psi)~ =~ \dot{E} \cdot N(\psi)~
   +~ E \cdot \dot{N}(\psi). \]
\end{proof}

In the case that $q$ is the Dirichlet energy associated
to a family of Riemannian metrics, the quantities in 
Proposition \ref{Variation} can be expressed in terms 
of the first variation of the metric and the associated
Riemannian measure. 

\begin{prop}\label{Dotq1}
Let $t \rightarrow g_{t}$ be a real-analytic family 
of Riemannian metrics on $M$, and suppose that $q$ is 
defined by (\ref{q}). Then we have 
\[  \dot{q}(u)~ =~  - \int_M \dot{g}
       \left (\nabla u, \nabla u \right)~ dm~
         +~ \int_M g \left(\nabla u, \nabla u \right)~ \dot{dm} \]
and 
\[  \dot{N}(u)~ =~  \int_M |u|^2~ \dot{dm}. \]
\end{prop}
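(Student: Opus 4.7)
The plan is to reduce both identities to the elementary fact that a function $u \in H^1_D(M)$ does not depend on $t$; all of the $t$-dependence in the integrands lives in the metric $g_t$ and the associated Riemannian measure $dm_t = dm$. Once this is observed, the formula for $\dot N$ is immediate from differentiation under the integral:
\[
\dot N(u) \;=\; \frac{d}{dt}\int_M |u|^2\, dm_t \;=\; \int_M |u|^2\, \dot{dm},
\]
since $u$ is $t$-independent.

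For $\dot q$, the first step is to rewrite the integrand in a form that isolates the metric dependence on the cotangent side. Using the definition of the gradient, one has the pointwise identity
\[
g_t(\nabla_t u, \nabla_t u) \;=\; g_t^{-1}(du, du),
\]
where $g_t^{-1}$ denotes the induced metric on $T^*M$. Since $du$ does not depend on $t$, differentiating in $t$ only acts on $g_t^{-1}$. Differentiating the identity $g_t^{-1} \circ g_t = \mathrm{Id}$ gives the standard formula
\[
\frac{d}{dt}\, g_t^{-1} \;=\; -\, g_t^{-1}\, \dot g_t\, g_t^{-1},
\]
which, when contracted against $du \otimes du$, becomes
\[
\frac{d}{dt}\bigl[g_t^{-1}(du,du)\bigr] \;=\; -\,\dot g_t(\nabla_t u, \nabla_t u).
\]
This is the main (and essentially only) computational content.

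The second step is to apply the product rule to $q_t(u) = \int_M g_t^{-1}(du,du)\, dm_t$. Combining the two contributions gives
\[
\dot q(u) \;=\; -\int_M \dot g(\nabla u, \nabla u)\, dm \;+\; \int_M g(\nabla u, \nabla u)\, \dot{dm},
\]
which is the claimed formula. By density, extending from smooth $u$ to $u \in H^1_D(M)$ is routine using the equivalence of the $H^1_D$-norms for different $t$ noted just before Proposition \ref{Equivalent}.

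There is no real obstacle here; the only thing to be careful about is the sign in $\frac{d}{dt}g_t^{-1}$, which produces the minus sign in front of the $\dot g$-term. Everything else is the product rule and the definition of the gradient.
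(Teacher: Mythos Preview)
Your proof is correct and is essentially the same as the paper's: both hinge on the identity obtained by differentiating the defining relation $g_t(\nabla_t u, X) = du(X)$, which you package as $\frac{d}{dt}g_t^{-1} = -g_t^{-1}\dot g_t g_t^{-1}$ on the cotangent side, while the paper writes it as $\dot g(\nabla u, X) + g(\dot\nabla u, X) = 0$ and substitutes $X = \nabla u$. The only difference is organizational---you avoid the intermediate three-term product rule expansion by moving to $g_t^{-1}(du,du)$ first---but the content is identical.
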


\begin{proof}
By differentiating (\ref{q}) we have 
\[    \dot{q}(u)~ =~ 
   \int_M \dot{g} \left (\nabla u, \nabla u \right)~ dm~ +~
      2\int_M g \left (\nabla u, \dot{\nabla} u \right)~ dm~
         +~    \int_M g \left(\nabla u , \nabla u \right)~ \dot{dm}. \]
By definition $g(\nabla u, X)= X(f)$ for all vector fields $X$,
and hence we have 
\begin{equation}\label{gDotNabla}  \dot{g}(\nabla u, X)~ +~ g( \dot{\nabla} u, X)~ =~ 0. \end{equation}
In particular, if $X= \nabla u$, then we have 
\[    \int_M \dot{g}(\nabla u, \nabla u)~ dm~
 +~  \int_M g( \dot{\nabla} u, \nabla u)~ dm~=~0. \]
Substitution into the formula for $\dot{q}$ gives the first formula.
The second formula follows from (\ref{Nt}).  
\end{proof}

The following lemma will be used to translate 
estimates on the logarithmic derivative of an 
eigenbranch into statements concerning the convergence 
of the eigenbranch.  

\begin{lem}\label{ExpTrick}
If there exists $t_0$ and  a continuous positive  function 
$\rho:(0, t_0] \rightarrow {\mathbb R}^+$ such that 
\[ \partial_t E_t~ \geq~  -~ \rho(t) \cdot E_t
\]
for all $t \in (0, t_0]$,
then the function 
\[   F(t)~  =~ \exp \left(-\int_{t}^{t_0} \rho(s)~ ds \right) \cdot E_t
\]
converges as $t$ tends to zero. In particular, if $\rho$
is integrable, then $E_t$ converges 
as $t$ tends to zero. 
\end{lem}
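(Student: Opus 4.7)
The plan is to recognize the hypothesis as a differential inequality amenable to the integrating factor $\exp\!\left(-\int_t^{t_0} \rho(s)\,ds\right)$, and to use this to convert the one-sided bound on $\partial_t E_t$ into monotonicity of $F$. The heart of the argument is therefore a one-line computation followed by a standard monotone-convergence argument.

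First I would differentiate $F$. Since $\partial_t\!\left(-\int_t^{t_0} \rho(s)\,ds\right) = \rho(t)$, the product rule yields
\[
  \partial_t F(t) ~=~ \exp\!\left(-\int_t^{t_0} \rho(s)\, ds\right) \cdot \bigl( \rho(t)\cdot E_t ~+~ \partial_t E_t\bigr).
\]
The hypothesis $\partial_t E_t \geq -\rho(t) E_t$ is exactly the statement that the bracket is non-negative, so $\partial_t F \geq 0$ on $(0,t_0]$; in other words $F$ is non-decreasing.

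Next I would argue that $F$ has a limit as $t \to 0^+$. The Laplacian eigenvalues under Dirichlet or Neumann boundary conditions are non-negative, so $E_t \geq 0$ and hence $F(t) \geq 0$. A non-decreasing function on $(0, t_0]$ that is bounded below thus has a (finite) limit as $t \to 0^+$: as $t$ decreases we move backward along the monotonicity, so $F(t)$ is bounded above by $F(t_0)$ and below by $0$, and monotone-bounded sequences converge.

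Finally, for the ``in particular'' clause, if $\rho \in L^1(0,t_0]$ then $\int_t^{t_0}\rho(s)\,ds$ tends to the finite limit $\int_0^{t_0}\rho(s)\,ds$ as $t \to 0^+$, so the exponential factor $\exp\!\left(-\int_t^{t_0}\rho(s)\,ds\right)$ tends to the strictly positive number $\exp\!\left(-\int_0^{t_0}\rho(s)\,ds\right)$. Dividing, $E_t = F(t) \big/ \exp\!\left(-\int_t^{t_0}\rho(s)\,ds\right)$ is a quotient of two convergent functions with non-vanishing denominator, so $E_t$ converges as $t \to 0^+$. I do not anticipate a genuine obstacle here; the only subtlety is noticing that the sign of the hypothesis matches precisely the sign needed for the integrating factor to produce a monotone quantity.
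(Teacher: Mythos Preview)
Your proof is correct and follows essentially the same approach as the paper: compute $F'(t)$ via the product rule, use the hypothesis to see $F' \geq 0$, and conclude that $F$ is nonnegative and nondecreasing hence convergent as $t \to 0^+$. You have simply spelled out the monotone-convergence step and the ``in particular'' clause more explicitly than the paper does.
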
 

\begin{proof}
We have
\[ F'(t)\,=\,\left( \partial_t E_t~ +~ \rho(t) \cdot E\right) \cdot 
\exp \left(-\int_{t}^{t_0} \rho(s)~ ds \right)~ \geq 0, \]
and hence $F$ is increasing. Since $F$ is nonnegative, the claim follows. 
\end{proof}

Using this lemma and Proposition \ref{Variation} we have the following corollary. 
\begin{coro}
Suppose that there exists a constant $C$ such
that for all $u \in H^1(M)$  we have 
\begin{equation} \label{logq} 
   \dot{q}(u)~ \geq~ -C \cdot q(u)
\end{equation}
and 
\begin{equation} \label{logN}
    {\dot{N}(u)}~ \leq~ C \cdot N(u). 
\end{equation}
Then 
$E_t$ converges as $t$ tends to $0.$
\end{coro}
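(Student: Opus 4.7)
The plan is to combine Proposition \ref{Variation} with the eigenvalue equation to derive a pointwise lower bound on the logarithmic derivative $\dot{E}_t/E_t$, and then invoke Lemma \ref{ExpTrick} with a constant $\rho$.

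First I would apply the variational formula of Proposition \ref{Variation} to the normalized eigenbranch $\psi = \psi(t)$. Since $\psi$ is itself an eigenfunction, part (a) of Proposition \ref{Analytic} (applied with $v = \psi$) gives the identity $q_t(\psi) = E_t \cdot N_t(\psi)$. Dividing the formula $\dot{E} \cdot N(\psi) = \dot{q}(\psi) - E \cdot \dot{N}(\psi)$ by $E \cdot N(\psi) = q(\psi)$ then yields
\[
\frac{\dot{E}_t}{E_t}~ =~ \frac{\dot{q}(\psi)}{q(\psi)}~ -~ \frac{\dot{N}(\psi)}{N(\psi)}.
\]
(If $E_t \equiv 0$ the conclusion is trivial, so we may assume $E_t > 0$.)

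Next, I would insert the two hypotheses (\ref{logq}) and (\ref{logN}), specialized to $u = \psi(t) \in H^1_D(M) \subset H^1(M)$. The inequality $\dot{q}(\psi) \geq -C \cdot q(\psi)$ yields $\dot{q}(\psi)/q(\psi) \geq -C$, while $\dot{N}(\psi) \leq C \cdot N(\psi)$ gives $-\dot{N}(\psi)/N(\psi) \geq -C$. Adding these produces the pointwise lower bound
\[
\partial_t E_t~ \geq~ -2C \cdot E_t \qquad \text{for all } t \in (0,t_0].
\]

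Finally, I would apply Lemma \ref{ExpTrick} with the constant function $\rho(t) \equiv 2C$, which is integrable on $(0, t_0]$, to conclude that $E_t$ converges as $t \to 0$.

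The argument is essentially mechanical once the variational formula is rewritten in logarithmic form; there is no real obstacle, only a minor verification that the assumptions (\ref{logq}) and (\ref{logN}), stated for general $u \in H^1(M)$, may be applied to the specific eigenfunction $\psi(t)$, and that one may divide by $q(\psi)$ and $N(\psi)$, both of which are positive for a nontrivial normalized eigenfunction with $E_t > 0$.
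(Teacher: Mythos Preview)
Your proposal is correct and follows essentially the same route as the paper: combine Proposition~\ref{Variation} with the eigenvalue identity $q(\psi)=E\cdot N(\psi)$ and the hypotheses (\ref{logq})--(\ref{logN}) to obtain $\dot E \ge -2C\,E$, then apply Lemma~\ref{ExpTrick} with $\rho\equiv 2C$. The paper's proof is simply the terse two-line version of what you wrote out; the only cosmetic difference is that the paper avoids dividing by $E$ (and hence the case split on $E_t\equiv 0$) by bounding $\dot q(\psi)-E\,\dot N(\psi)$ directly.
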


\begin{proof}
From Proposition \ref{Variation} we have
$\dot{E}\geq -2C \cdot E$. Thus we can apply 
Lemma \ref{ExpTrick} with $\rho \equiv 2 C$.
\end{proof}

\subsection{A localization principle}

Unfortunately, inequalities  (\ref{logq}) and (\ref{logN})
do not hold true for the singular perturbation that we consider here. 
We now show, however, that these inequalities do hold 
for all $u$ with support outside of a neighborhood of the
slit (Proposition \ref{SupportAway}). 

To state this result in a convenient form, we introduce
the following notation. 
Let $U$ be a measurable set.\footnote{In the sequel, 
$U$ will often be an elliptical neighborhood of the slit.}  
For $w \in H^1$, we denote by 
$q_U(w)$ the `restriction' of $q$ to $U$. That is, 
\[
q_U(w)\,=\, \int_U |\nabla w|^2\,dm.
\]
We will use analogous notation for the quadratic forms 
$\dot{q}$, $N$, and $\dot{N}$.

\begin{prop} \label{SupportAway}
Let $U \subset \Omega$ be a neighborhood of the slit. 
There exists a constant $C_U$ such that 
for any $w\in H^1$ 
\[
   {\dot{q}_{M\backslash U}(w)}~ \geq~ -C_U \cdot {q(w)}
\]
and 
\[
    \dot{N}_{M\backslash U}(w)~ \leq~ C_U \cdot N(w). 
\]
\end{prop}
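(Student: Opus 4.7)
The plan is to reduce the claim, via the variational formulas in Proposition \ref{Dotq1}, to pointwise bounds on $\dot g_t$ and $\partial_t\, dm_t$ that hold uniformly in $t\in[0,t_0]$ away from the slit. Concretely, writing
\[
\dot q_{M\setminus U}(w) = -\int_{M\setminus U} \dot g(\nabla w,\nabla w)\,dm + \int_{M\setminus U} g(\nabla w,\nabla w)\,\dot{dm},
\qquad
\dot N_{M\setminus U}(w) = \int_{M\setminus U} |w|^2\,\dot{dm},
\]
it will suffice to exhibit a constant $C_U$ such that, for every $t\in(0,t_0]$ and every $p\in M\setminus U$, one has $|\dot g_t(X,X)| \leq C_U\, g_t(X,X)$ for all $X\in T_pM$, together with $|\partial_t\, dm_t| \leq C_U\, dm_t$ as densities on $M\setminus U$. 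Plugging these pointwise inequalities into the two displays above immediately yields $|\dot q_{M\setminus U}(w)| \leq C_U\, q(w)$ and $\dot N_{M\setminus U}(w) \leq C_U\, N(w)$, from which the one-sided bound $\dot q_{M\setminus U}(w) \geq -C_U\, q(w)$ follows.

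To obtain the pointwise bounds, I would examine the explicit family
\[
\tilde\phi_t(r,\theta) = \bigl(\sqrt{r^2 + t^2\sigma(r/r_0)}\cos\theta,\ r\sin\theta\bigr).
\]
Since $r^2 + t^2\sigma(r/r_0) > 0$ whenever $r>0$, the map $(t,r,\theta)\mapsto \tilde\phi_t(r,\theta)$ is smooth on $[0,t_0]\times\{r>0\}\times S^1$ with everywhere invertible spatial differential, so $g_t = \tilde\phi_t^*(dx^2+dy^2)$ is a smooth family of Riemannian metrics on $\{r>0\}\subset M$ (at $t=0$ it reduces to the polar metric $dr^2 + r^2\,d\theta^2$). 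Given the neighborhood $U$ of the slit, choose $\delta>0$ so that $M\setminus U \subset \{r\geq\delta\}$. On the compact set $[0,t_0]\times\{r\geq\delta\}\times S^1$, the metric $g_t$ is bounded above and below and its $t$-derivative is bounded, so standard compactness yields a constant $C_U$ realizing the pointwise inequalities above uniformly in $t$.

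The only substantive point to verify is the uniform non-degeneracy of $g_t$ as $t\to 0^+$ on the region $\{r\geq\delta\}$, since outside a fixed neighborhood of $r=0$ we are not perturbing near a coordinate singularity: because $g_0$ is the standard polar metric there, non-degenerate for $r\geq\delta$, and $t\mapsto g_t$ is continuous in $C^1$ on this region, a uniform lower bound on the smallest eigenvalue of $g_t$ follows immediately. The same compactness argument handles $\partial_t\,dm_t$, since $dm_t = \mu_t(r,\theta)\,dr\,d\theta$ with $\mu_t$ smooth in $(t,r,\theta)$ and bounded below on $[0,t_0]\times\{r\geq\delta\}\times S^1$.

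The main obstacle, and essentially the only place where care is needed, is precisely this: ensuring that the explicit cutoff construction in $\tilde\phi_t$ produces a family of metrics whose degeneracy is entirely concentrated at the slit. Once that is observed, the rest of the argument is the routine compactness bookkeeping sketched above.
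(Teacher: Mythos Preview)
Your proposal is correct and follows essentially the same approach as the paper: both reduce via Proposition~\ref{Dotq1} to pointwise bounds $|\dot g_t|\le C\,g_t$ and $|\partial_t\,dm_t|\le C\,dm_t$ on $M\setminus U$, and then obtain these bounds from the smooth (in the paper, real-analytic) dependence of $g_t$ and $dm_t$ on $t\in[0,t_0]$ together with compactness away from the slit. The only cosmetic slip is that $\{r\ge\delta\}\times S^1$ is not compact by itself---you mean its intersection with $\overline{M}$, which is bounded since $\Omega$ is.
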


\begin{proof}
Let $SM$ denote the unit tangent bundle to $M$ (with respect to $g_{t_0})$, and 
for each small $t$, define  $F_t: SM \rightarrow \R$ by
\[   F_t(X)~ =~  \frac{  \frac{d}{d t} g_{t}(X,X)}{ g(X,X)}. \]
The restriction of $g_{t}$ to $\Omega \setminus U$ is real-analytic 
for $t \in [0, t_0]$, and hence  
\[   C_1~ =~ \sup \{ F_t(X)~ |~  t \in [0, t_0] \mbox{\ and \ } X \in S(\Omega \setminus U) \}
\]
is finite. By homogeneity of $g$ and $\dot{g}$ in each tangent space, 
we have $\dot{g}(X,X) \leq C_1 \cdot g(X,X)$ for 
all $X \in T(\Omega \setminus U)$.  Therefore, 
\[ 
 \int_{M\backslash U} \dot{g} \left(\nabla w, \nabla w \right)~ dm~
    \leq~ C_1 \int_{M\backslash U}   g \left(\nabla w, \nabla w \right)~ dm~
\]

Let $G_t: [0, r_0] \times M \rightarrow \R$ be defined by
\[   G_t(p)~ =~  \frac{\frac{d}{d t} dm_{t}}{dm_{t}}(p). \]
Since $dm_{t}$ restricted to $\Omega \setminus U$ depends 
real-analytically on $t \in [0, t_0]$,
\[   C_2~ =~ \sup \{ |f(t, X)|~ |~  t \in [0, r_0] \mbox{\ and \ } X \in \Omega \setminus U \}
\]
is finite.  It follows that 
\[   \int_{M\backslash U} w^2~ \dot{dm}~
\leq ~ C_2 \int_{M\backslash U}   w^2~ dm,  
\] 
and 
\[ 
 \int_{M\backslash U} 
g\left( \nabla w,\nabla w\right)\, \dot{dm}~
\geq~ -C_2\,\int_{M\backslash U} g\left( \nabla w,\nabla w \right)\, dm. 
\]
We use Proposition \ref{Dotq1} and set $C_U =C_1 + C_2.$ 
The result then follows. 
\end{proof}

\begin{coro}   \label{GlobalReduction}
Let $U$ be a neighborhood of the slit.
Suppose that there exist functions 
$\alpha, \beta, \gamma:[0, \infty) \rightarrow [0, \infty)$
such that the following holds for an eigenbranch $\psi$ 
\[  \dot{q}_U(\psi)~
    \geq~ -\alpha(t) \cdot q_U(\psi)~ -~ \beta(t) \cdot
    E \cdot N_U(\psi),   \]
and 
\[  \dot{N}_U( \psi)~
    \leq~ \gamma(t) \cdot N_U(\psi). \]
Then there exists $ C' > 0$ such that 
\[  \dot{E}~ \geq~ -\left( C'~ -~ \alpha(t)~ -~ \beta(t)~ -\gamma(t)\right) E.\]
\end{coro}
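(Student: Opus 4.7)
The plan is to start from the variational formula of Proposition \ref{Variation}, namely $\dot{E} \cdot N(\psi) = \dot{q}(\psi) - E \cdot \dot{N}(\psi)$, and then split each of the quadratic forms as an integral over $U$ plus an integral over $M \setminus U$. On $U$ we use the hypotheses directly, and on $M \setminus U$ we use Proposition \ref{SupportAway} applied with $w = \psi$.

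More concretely, I would first normalize so that $N(\psi) = 1$; this is legitimate because the eigenbranch in Proposition \ref{Analytic} was taken to be normalized, and in any case only the ratio $\dot{E}/E$ matters. Then I write
\[
\dot{E} ~=~ \dot{q}_U(\psi) ~+~ \dot{q}_{M \setminus U}(\psi) ~-~ E \cdot \dot{N}_U(\psi) ~-~ E \cdot \dot{N}_{M \setminus U}(\psi).
\]
The first and third terms are controlled by the hypothesized inequalities. For the second and fourth, Proposition \ref{SupportAway} yields
\[
\dot{q}_{M \setminus U}(\psi) ~\geq~ -C_U \cdot q(\psi), \qquad \dot{N}_{M \setminus U}(\psi) ~\leq~ C_U \cdot N(\psi).
\]
Since $\psi$ is a normalized eigenfunction, $q(\psi) = E \cdot N(\psi) = E$, so these two bounds contribute at worst $-2 C_U \cdot E$ to $\dot{E}$.

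It remains to handle the hypothesized localized bounds. The key observation is that $q_U(\psi) \leq q(\psi) = E$ and $N_U(\psi) \leq N(\psi) = 1$, so
\[
-\alpha(t) \cdot q_U(\psi) - \beta(t) \cdot E \cdot N_U(\psi) ~\geq~ -\bigl(\alpha(t) + \beta(t)\bigr) \cdot E
\]
and similarly $-E \cdot \gamma(t) \cdot N_U(\psi) \geq -\gamma(t) \cdot E$. Assembling the four contributions and setting $C' = 2 C_U$ yields the stated inequality.

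There is no genuine obstacle here: the statement is essentially a bookkeeping consequence of Propositions \ref{Variation} and \ref{SupportAway} once one remembers that the normalization of $\psi$ converts $q(\psi)$ and $N(\psi)$ into $E$ and $1$, respectively. The only mild point to watch is that the $E$ appearing in the hypothesis for $\dot{q}_U$ matches the eigenvalue of $\psi$, which is automatic since $\psi$ is the eigenfunction of the branch in question.
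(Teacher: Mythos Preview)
Your proposal is correct and follows essentially the same route as the paper: split $\dot{q}$ and $\dot{N}$ into their $U$ and $M\setminus U$ parts, apply Proposition~\ref{SupportAway} to the latter and the hypotheses to the former, then use $q_U(\psi)\le q(\psi)=E\cdot N(\psi)$ and $N_U(\psi)\le N(\psi)$ before invoking Proposition~\ref{Variation}; the paper arrives at the same bound with $C'=2C_U$. (Note that the signs in the displayed conclusion of the corollary are a typo in the paper: the intended inequality is $\dot{E}\ge -\bigl(C'+\alpha(t)+\beta(t)+\gamma(t)\bigr)E$, which is exactly what both your argument and the paper's derivation produce.)
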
   
\begin{proof} 
We have
\begin{eqnarray*}
  \dot{q}(\psi) &=& \dot{q}_{M\backslash U}(\psi)~ +~ \dot{q}_U(\psi)  \\
   &\geq& -C \cdot q(\psi)~ -\alpha \cdot q_U(\psi)~ 
     -~ \beta \cdot E \cdot N_U(\psi)  \\
     &\geq& -(C+ \alpha) \cdot q(\psi)~ -~
      \beta \cdot E \cdot N( \psi) \\   
   &\geq& -(C+ \alpha+ \beta) \cdot E \cdot N(\psi)
\end{eqnarray*}
where the constant $C$ comes from  Proposition  \ref{SupportAway}. 
Similarly, we find that 
\[  \dot{N}_U( \psi)~  \leq  (C + \gamma) \cdot N_U(\psi). \]
The estimate then follows from 
Proposition \ref{Variation} with $C'=2C$.
\end{proof}

\begin{remk}
Proposition \ref{SupportAway} and Corollary \ref{GlobalReduction}
represent a {\em localization principle} in the sense that 
an estimate of the logarithmic derivative of $E_t$ 
has been reduced to the study of the functionals $\dot{q}$
and $\dot{N}$ in a small neighborhood of the slit. 
This principle applies more generally to singular perturbation
problems in which the `singular support' of the perturbation 
is small. 
\end{remk}

\subsection{Evaluation on the ellipse}

We now let $U$ be the elliptical neighborhood of 
the slit of radius $r_0$ and evaluate $q_U$ and $N_U$. 
By using the expression for the gradient (\ref{Gradient}) 
and the Lebesgue measure (\ref{Lebesgue}) in elliptical
coordinates, we find that
\[
  q_U(v)~ =~ \int_{U}
    |\partial_r v |^2~ (r^2+t^2)^{\frac{1}{2}}~ dr d \theta~
    +~  \int_U ~|\partial_{\theta} v|^2~
      \frac{dr d \theta}{(r^2+t^2)^{\frac{1}{2}}} 
\]
and 
\[
  N_U(v)~ =~   \int_U
    |v|^2 
  \frac{\left(r^2 +t^2\sin^2(\theta) \right)}{
    (r^2+t^2)^{\frac{1}{2}} }~ dr d \theta.
\]
By Proposition \ref{Dotq1} or direct 
computation, we find that 
\[ 
  \dot{q}_U(v)~ =~ 
t  \int_{U}
    |\partial_r v |^2~  \frac{dr d \theta}{(r^2+t^2)^{\frac{1}{2}}}~
    - ~ t  \int_U ~|\partial_{\theta} v|^2~
      \frac{dr d \theta}{(r^2+t^2)^{\frac{3}{2}} } 
\]
and 
\[
       \dot{N}_U(v)~
   =~  t \int_U
    |v|^2  \frac{\left((2 \sin^2(\theta)-1)r^2 +t^2\sin^2(\theta) \right)}{
    (r^2+t^2)^{\frac{3}{2}} }~ dr d \theta.
\]

It will prove convenient to make a change of variables.
\begin{nota} 
In the following, we let
\begin{itemize}
 \item  $r=t \cdot  \sinh x$, 

 \item $Y_t=\sinh^{-1}(r_0/t)$, and 

 \item $U_t=[0, Y_t] \times S^1$.
\end{itemize}
\end{nota}

With this change of coordinates, the formulae above become 
\begin{equation} \label{qEllipse}
  q_U(v)~ =~  \int_{U_t}
    |\partial_x v|^2~ dx d \theta~
    +~  \int_{U_t} ~|\partial_{\theta} v|^2~
      dx d \theta,
\end{equation}
\begin{equation}  \label{NEllipse}
  N_U(v)~ =~   t^2 \int_{U_t}
    |v|^2 
  \left(\sinh^2 x + \sin^2(\theta) \right)~ dx d \theta,
\end{equation} 
\begin{equation} \label{qDot}
  \dot{q}_U(v)~ =~ 
  \frac{1}{t}  \int_{U_t}
    \frac{|\partial_x v |^2}{\cosh^2 x}~ dx d \theta~
    -~ \frac{1}{t}  \int_{U_t}~
    \frac{|\partial_{\theta} v|^2}{\cosh^2 x}~ dx d \theta, 
\end{equation}
and 
\begin{equation}  \label{NDot}
       \dot{N}_U(v)~
   =~  t \int_{U_t}
    |v|^2  \left( \sin^2(\theta)~ - \cos^2(\theta) \tanh^2(x)
    \right)~ dx d \theta.
\end{equation}
%

From (\ref{qDot}) and (\ref{NDot}) we find that
\begin{equation} \label{qDotEst}
  \dot{q}_U(v)~ \geq~ 
    -~  \frac{1}{t}  \int_{U_t}~
    \frac{|\partial_{\theta} v|^2}{\cosh^2 x}~ dx d \theta,
\end{equation}
and
\begin{equation}  \label{NDotEst}
       \dot{N}_U(v)~
   \leq~  t \int_{U_t}
    |v|^2  \sin^2(\theta)~ dx d \theta~
\leq~  t \int_{U_t}
    |v|^2 ~ dx d \theta~
\end{equation}


\section{Mathieu functions and integral estimates} 
\label{SectionSeparation}

In this section we provide estimates that will be used 
to control ratios such as $\dot{N}/N$ and $\dot{q}/q$ in the
next section.

\subsection{Separation of variables}

E. Mathieu \cite{Mathieu} observed that one can 
apply the method of separation of variables to 
the eigenvalue problem, $\Delta \psi= E \cdot \psi$,
on an ellipse in ${\mathbb R}^2$.  He made a 
detailed study of the solutions to the resulting ordinary 
differential equations, solutions that now bear his name. 
To perform the separation of variables,
we use the same change of variables as in the preceding section, 
i.e. we set 
\[   r~ =~ t \cdot \sinh(x). \]

Let $h \in {\mathbb R}$.  The operator 
\[  -\frac{\partial^2}{\partial \theta^2}~ +~ h^2 \cos^2(\theta) \]
acting on $L^2({\mathbb R}/(2 \pi {\mathbb Z}), d \theta)$
has discrete spectrum
\[ 0~ \leq~ b_0(h)~ \leq~ b_1(h)~ \leq~ b_2(h)~ \leq~ \cdots.  \]
For $h \neq 0$, the spectrum is simple \cite{MorseFeshbach}. 
Let $v_{i,h}$ denote the $L^2$-normalized eigenfunction 
associated to $b_i(h)$. We will call $v_{i,h}$ 
the {\em $i^{{\rm th}}$ angular Mathieu function}. 
In the following we will often suppress the dependence 
of $v_{i,h}$ on $h$ from the notation. 

For the convenience of the reader, we prove the following 
in Appendix \ref{MathieuAppendix}. 

\begin{prop}
Suppose that $\Delta \psi = E \cdot \psi$ on 
the ellipse of radius $r_0=t \sinh(x_0)$, and suppose that $\psi$
satisfies a Dirichlet (resp. Neumann) boundary condition 
on the slit: 
For $0 \leq \theta  \leq 2\pi$ we have  
\[ \psi(0, \theta) \equiv 0 \ \ \   (\mbox{resp.  \ } 
   \partial_x \psi(0, \theta) \equiv 0). \] 
Then for $x \geq 0$,
\begin{equation} \label{Expand}
 \psi(x, \theta)~ =~ \sum_i u_{i}(x) \cdot v_{i}(\theta)
\end{equation}
where 
\[     h~ =~ t \cdot \sqrt{E}, \]
and where $u_{i}:[0, x_0) \rightarrow {\mathbb R}$ 
is a solution to
\begin{equation}  \label{UODE}  
-~ u''(x)~ 
    +~ \left(  b_{i}(h)~ -~ h^2 \cosh^2(x) \right) 
     \cdot u(x)~ =~ 0
\end{equation}
with $u(0)=0$ (resp. $u'(0)=0$). 
\end{prop}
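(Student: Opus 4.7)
The plan is to carry out separation of variables in the coordinate system $(x,\theta)$ defined by $r = t\sinh(x)$. Composing $\phi_t$ with this substitution gives the map $(x,\theta)\mapsto (t\cosh x\cos\theta,\, t\sinh x\sin\theta)$, and a direct Jacobian computation shows that the pull-back of the Euclidean metric $dx'^2+dy'^2$ is the conformally flat metric $g = t^2(\sinh^2 x + \sin^2\theta)(dx^2 + d\theta^2)$. Since the Laplace-Beltrami operator of a 2-dimensional conformally flat metric $\rho^2\delta$ is $\rho^{-2}$ times the flat Laplacian, the equation $\Delta\psi=E\psi$ becomes, in these coordinates and with $h = t\sqrt{E}$,
\[
\partial_x^2\psi + \partial_\theta^2\psi + h^2\bigl(\sinh^2 x + \sin^2\theta\bigr)\psi = 0.
\]

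The central observation is the identity $\sinh^2 x + \sin^2\theta = \cosh^2 x - \cos^2\theta$, which splits the zeroth-order coefficient into one piece depending only on $x$ and one depending only on $\theta$:
\[
\bigl[\partial_x^2 + h^2\cosh^2 x\bigr]\psi \;+\; \bigl[\partial_\theta^2 - h^2\cos^2\theta\bigr]\psi \;=\; 0.
\]
The operator $-\partial_\theta^2 + h^2\cos^2\theta$ appearing in the angular bracket is precisely the angular Mathieu operator, which is self-adjoint on $L^2(S^1)$ and admits the $\{v_{i,h}\}_{i\geq 0}$ as a complete orthonormal basis of eigenfunctions with eigenvalues $b_i(h)$.

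Next I would expand $\psi(x,\theta) = \sum_i u_i(x)\,v_i(\theta)$, where $u_i(x) := \int_{S^1}\psi(x,\theta)\,v_i(\theta)\,d\theta$; smoothness of $\psi$ in the interior together with elliptic regularity of the angular problem ensures that this Fourier--Mathieu series converges in $C^\infty$ on compact subsets of $(0,x_0)\times S^1$. Substituting into the separated equation and using $v_i'' = (h^2\cos^2\theta - b_i)v_i$ to evaluate the angular bracket, the $h^2\cos^2\theta$ terms cancel and one is left with
\[
\sum_i \bigl[u_i''(x) + (h^2\cosh^2 x - b_i)u_i(x)\bigr]\,v_i(\theta) \;=\; 0.
\]
Pairing against each $v_i$ yields equation \eqref{UODE}. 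For the boundary behavior at the slit, note that the slit corresponds to $\{x=0\}$ (with both sides traced out as $\theta$ runs through $S^1$) and the interior normal direction is $\partial_x$; so $\psi(0,\theta)\equiv 0$ forces $u_i(0)=0$ term-by-term, while $\partial_x\psi(0,\theta)\equiv 0$ forces $u_i'(0)=0$. The only real subtlety is tracking sign conventions carefully so that in the eigenvalue equation the $\cos^2\theta$ coefficient appears with the sign required for the angular Mathieu operator rather than its negative; with the sign convention of the paper (positive Laplacian) it drops out cleanly and the separation goes through without coupling terms.
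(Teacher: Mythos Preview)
Your proposal is correct and follows essentially the same route as the paper's proof (given in Appendix~\ref{MathieuAppendix}): compute that in the $(x,\theta)$ coordinates the equation becomes $-(\partial_x^2+\partial_\theta^2)\psi = h^2(\cosh^2 x - \cos^2\theta)\psi$, expand $\psi$ in the orthonormal basis $\{v_{i,h}\}$ of angular Mathieu functions, and project to obtain the radial ODE and boundary conditions. Your write-up is in fact more explicit than the paper's (you spell out the conformal factor and the identity $\sinh^2 x+\sin^2\theta=\cosh^2 x-\cos^2\theta$, whereas the paper simply calls the derivation a ``straightforward computation'' and ``integrating by parts'').
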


A solution $u$ to (\ref{UODE}) will be called a 
{\em radial Mathieu function}.

In the sequel, our methods will rely upon 
estimates of the distribution of the $L^2$ and $H^1$ 
mass of an eigenfunction on the ellipse.  The following
lemma will allow us to reduce such estimates to
estimates of radial Mathieu functions.  
     
\begin{lem}  \label{Decomp}
We have 
\begin{equation}   \label{Decompweight}
\int_{S^1} |\psi|^2~ d\theta~
 =~ \sum_{i=0}^{\infty}~ |u_i(x)|^2.
\end{equation}
\begin{equation} \label{DecompweightTheta}
  \int_{S^1} \left|\partial_{\theta}\psi \right|^2 d \theta~  
+~ h^2\int_{S^1} |\psi|^2~ \cos^2(\theta)~ d \theta~
=~  \sum_{i=0}^{\infty}~ b_i(h) \cdot  |u_i(x)|^2.
\end{equation}
\end{lem}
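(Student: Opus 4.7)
The plan is to recognize both identities as Parseval-type statements applied to the expansion $\psi(x,\theta) = \sum_i u_i(x)\, v_i(\theta)$ in the orthonormal basis of $L^2(S^1)$ given by the angular Mathieu functions $\{v_{i,h}\}$.

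For the first identity, I would simply fix $x$ and regard $\psi(x,\cdot)$ as a function in $L^2(S^1)$. Since $\{v_i\}_{i\geq 0}$ is an orthonormal basis of $L^2(S^1,d\theta)$, the coefficients in the expansion are
\[
\langle \psi(x,\cdot), v_i\rangle_{L^2(S^1)}~ =~ u_i(x),
\]
and Plancherel's identity gives $\int_{S^1}|\psi(x,\theta)|^2\,d\theta = \sum_i |u_i(x)|^2$.

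For the second identity, the key observation is that the left-hand side is precisely the quadratic form associated to the angular operator
\[
L_h~ =~ -\,\frac{\partial^2}{\partial\theta^2}~ +~ h^2\cos^2(\theta),
\]
evaluated on $\psi(x,\cdot)$. Indeed, integration by parts on the circle (no boundary terms) gives
\[
\int_{S^1}|\partial_\theta \psi|^2\,d\theta~ +~ h^2\int_{S^1}|\psi|^2\cos^2(\theta)\,d\theta~ =~ \langle L_h\,\psi(x,\cdot),\,\psi(x,\cdot)\rangle_{L^2(S^1)}.
\]
Since $L_h v_i = b_i(h)\,v_i$ and $\{v_i\}$ is orthonormal, substituting the expansion $\psi(x,\cdot)=\sum_i u_i(x)v_i$ and distributing $L_h$ term by term yields $\sum_i b_i(h)|u_i(x)|^2$.

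The only technical point to address is the justification of termwise differentiation/integration in $\theta$. Since $\psi$ is smooth on the ellipse (by elliptic regularity for the eigenvalue equation), $\psi(x,\cdot)$ lies in $H^1(S^1)$ for each fixed $x$, and the partial sums $\sum_{i\leq N} u_i(x) v_i$ converge to $\psi(x,\cdot)$ in $H^1(S^1)$ because $L_h$ is essentially self-adjoint with compact resolvent. This legitimizes termwise application of $L_h$ and the exchange of summation with the angular integral. I do not anticipate a serious obstacle here; the content of the lemma is spectral-theoretic, and the smoothness of $\psi$ (inherited from being an eigenfunction of $\Delta$ on the open ellipse) removes any analytical delicacy.
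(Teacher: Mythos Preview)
Your proof is correct and follows essentially the same route as the paper: both arguments use that $\{v_i\}$ is an orthonormal basis of $L^2(S^1)$ to get the first identity, and both obtain the second by recognizing the left-hand side as the quadratic form of $-\partial_\theta^2 + h^2\cos^2\theta$ (the paper phrases this as computing $\int \partial_\theta v_i\,\partial_\theta v_j + h^2\int v_i v_j\cos^2\theta = b_i\delta_{ij}$ via integration by parts, which is exactly your $\langle L_h\psi,\psi\rangle$ computation done basis-wise). Your explicit remark on $H^1$-convergence of the partial sums is a welcome addition that the paper leaves implicit.
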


\begin{proof}
By standard Sturm-Liouville theory, the angular Mathieu 
functions $\{v_i \}$ form a complete and orthogonal 
set in $L^2(S^1, d \theta)$. 
(See Appendix \ref{MathieuAppendix}).  
Thus, (\ref{Decompweight}) follows from (\ref{Expand}).

From (\ref{Expand}), we have $\partial_{\theta} \psi = 
\sum u_i \cdot \partial_{\theta}v_i$. Since $v_i$ is an 
eigenfunction with eigenvalue $b_i= b_i(h)$ we have 
\[  -\partial^2_{\theta} v_i~ + h^2 \cos^2(\theta) \cdot v_i~ 
    =~ b_i \cdot v_i.
\]
By multiplying by $v_j$ and integrating by parts, 
we have
\[  \int_{S^1}  \partial_{\theta} v_i \cdot
  \partial_{\theta} v_j~ d \theta~
    +h^2   \int_{S^1} v_i~ \cdot v_j~ \cos^2(\theta)~ d \theta~
  =~  b_i  \int_{S^1} v_i~ \cdot v_j~ d \theta.
\]
Equation (\ref{DecompweightTheta}) then follows from 
the fact that $\{v_i\}$ is complete and orthogonal. 
\end{proof}


\subsection{Radial convexity estimates} \label{ConvexitySection}

Our estimates of the distribution of the $L^2$ mass 
of a radial Mathieu function $u$ depend upon the following 
observation. 
We have 
\[  (u^2)''~  =~ 2 \cdot u \cdot  u''~ +~ 2 \cdot (u')^2~ \geq~  
    2 \cdot u \cdot  u'', \]
and hence if $b_i-h^2 \cosh^2(x) \geq \frac{1}{2}$, then 
by (\ref{UODE}) we have 
\[  (u^2)''~ \geq~  u^2. \]

We will let $w:[0, X] \rightarrow {\mathbb R}^+$
denote a smooth function such that
\[ w''(x)~ \geq~  w(x), \]
\[w(x)~  \geq~ 0 \]
for all $x \in [0,X]$ and 
\[ w'(0)~ \geq~ 0. \]
In particular, if $X$ satisfies 
\begin{equation} \label{ConvexCondition}
 b_i~ -~ h^2 \cdot \cosh^2(X)~ \geq~ \frac{1}{2},  
\end{equation}
then the square, $u_i^2$, of a radial Mathieu function
satifying either Dirichlet or Neumann conditions is 
an example of such a function $w$.

The following expression of convexity is the basis for
our estimates. 

\begin{lem} \label{SuperAdditive}
For all $x, y\geq 0$  such that $x + y \leq X$, we have
\[   w(x+y)~ \geq~  w(x) \cdot  \cosh(y). \]
\end{lem}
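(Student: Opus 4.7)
Fix $x \in [0,X]$ and view both sides as functions of $y \in [0, X-x]$. Set $u(y) = w(x+y)$ and $v(y) = w(x)\cosh(y)$; then $u$ satisfies $u'' \geq u$ (this is the hypothesis on $w$) while $v$ satisfies the equality $v'' = v$, and the desired inequality is simply $u(y) \geq v(y)$. Note that if $w(x) = 0$ the claim is immediate since $w \geq 0$, so I may assume $w(x) > 0$, which in turn means $v(y) > 0$ throughout the interval.

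A preliminary step is to verify that $w'(x) \geq 0$ for every $x \in [0,X]$, since only $w'(0) \geq 0$ is assumed. This is immediate: the inequality $w'' \geq w \geq 0$ forces $w'$ to be nondecreasing, and combined with $w'(0) \geq 0$ it yields $w'(x) \geq 0$ on the whole interval.

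The main idea is a standard Sturm-type Wronskian comparison. Define
\[
W(y) = u'(y)\,v(y) - u(y)\,v'(y).
\]
Differentiating and using $v'' = v$ cancels the cross terms, leaving
\[
W'(y) = \bigl(u''(y) - u(y)\bigr)\,v(y) \geq 0,
\]
since $v \geq 0$ and $u'' \geq u$. Thus $W$ is nondecreasing, and its value at $y=0$ is $W(0) = w'(x)\,w(x) - w(x)\cdot 0 = w(x)\,w'(x) \geq 0$ by the preliminary step. Hence $W \geq 0$ throughout $[0,X-x]$.

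Finally, since $v > 0$ on the interval, the quotient $u/v$ has derivative $W/v^2 \geq 0$, so $u/v$ is nondecreasing. In particular $u(y)/v(y) \geq u(0)/v(0) = 1$, which is exactly the claim $w(x+y) \geq w(x)\cosh(y)$. The only mildly subtle point is the preliminary observation that $w' \geq 0$ everywhere (needed to sign $W(0)$); once that is in hand, the Wronskian argument is mechanical.
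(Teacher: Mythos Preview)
Your proof is correct and follows essentially the same line as the paper's: both dispose of the case $w(x)=0$, use the preliminary observation that $w'' \geq w \geq 0$ forces $w'\geq 0$ on all of $[0,X]$, and then compare $w(x+y)$ to $w(x)\cosh y$ via the differential inequality. The paper argues directly with the difference $z(y)-\cosh y$ (where $z(y)=w(x+y)/w(x)$), while you recast the comparison as a Wronskian argument showing the quotient $u/v$ is nondecreasing; this is a cosmetic variation rather than a different route.
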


\begin{proof}
The claim holds if $w(x)=0$. So we may assume that 
 $w(x) >0$. Let $z(y) = w(x+y)/w(x)$. 
Note that $z(0)=1$ and  $z'(y) \geq 0$. Since 
$w''(x+y) \geq w(x+y)$, we have  
$\frac{d^2}{dy^2} (z(y)-\cosh( y)) \geq 0$ and
since $w'(x) \geq 0$, we have
$\left. \frac{d}{dy} \right|_{y=0}  ( z(y)-\cosh(y)) \geq 0$.
It follows that  
$\partial_y (z(y)-\cosh(y)) \geq 0$ for all $y \geq 0$. 
Since $z(0)-\cosh(0)=0$, we have 
$z(y)-\cosh(y)\geq 0$ for all $y$, and the result follows.
\end{proof}

\begin{prop} \label{Convex}
Let $p:[0,X] \rightarrow {\mathbb R}^+$ be a decreasing 
integrable function. Then  
\[ \int_{0}^{X}~ p(x) \cdot w(x)~ dx~ \leq~ 
   \left( \frac{p(0)}{\cosh \left(X/2 \right)}~
  +~ p(X/2) \right)~ 
       \int_{0}^X w(x)~ dx.
\]
\end{prop}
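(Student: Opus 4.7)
The plan is to split the integral at $X/2$ and handle each piece by exploiting a different feature: on $[X/2, X]$ the monotonicity of $p$ gives the bound $p(x) \le p(X/2)$, while on $[0, X/2]$ the convexity estimate of Lemma \ref{SuperAdditive} lets us trade the factor $p(0)$ against a large value of $\cosh(X/2)$.

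More precisely, first I would write
\[
\int_0^X p(x)\, w(x)\, dx \;=\; \int_0^{X/2} p(x)\, w(x)\, dx \;+\; \int_{X/2}^X p(x)\, w(x)\, dx.
\]
For the second term, since $p$ is decreasing and nonnegative, $p(x) \le p(X/2)$ on $[X/2, X]$, and $w \ge 0$, so this term is at most $p(X/2) \int_{X/2}^X w(x)\, dx \le p(X/2) \int_0^X w(x)\, dx$.

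For the first term, the key step is to apply Lemma \ref{SuperAdditive} with $y = X/2$: for every $x \in [0, X/2]$,
\[
w(x) \;\le\; \frac{w(x + X/2)}{\cosh(X/2)}.
\]
Combined with $p(x) \le p(0)$, this yields
\[
\int_0^{X/2} p(x)\, w(x)\, dx \;\le\; \frac{p(0)}{\cosh(X/2)} \int_0^{X/2} w(x + X/2)\, dx \;=\; \frac{p(0)}{\cosh(X/2)} \int_{X/2}^X w(u)\, du,
\]
which is in turn at most $\frac{p(0)}{\cosh(X/2)} \int_0^X w(x)\, dx$. Summing the two estimates gives the claimed inequality.

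I do not expect any serious obstacle here: the statement essentially packages the super-additivity lemma together with the monotonicity of $p$, and the splitting point $X/2$ is dictated by the symmetric appearance of $p(0)/\cosh(X/2)$ and $p(X/2)$ in the conclusion. The one detail worth noting is that the hypotheses $w \ge 0$, $w'(0) \ge 0$, and $w'' \ge w$ are precisely what Lemma \ref{SuperAdditive} requires, so no additional regularity or sign assumption is needed.
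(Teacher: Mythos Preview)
Your proof is correct and is essentially the same as the paper's: both split the integral at $X/2$, use $p(x)\le p(0)$ and $p(x)\le p(X/2)$ on the two halves, and invoke Lemma~\ref{SuperAdditive} with $y=X/2$ to control $\int_0^{X/2} w$ by $\frac{1}{\cosh(X/2)}\int_{X/2}^X w$. The only cosmetic difference is the order in which you apply the monotonicity of $p$ and the super-additivity bound on $w$.
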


\begin{proof}
Applying Lemma \ref{SuperAdditive} with $y=X/2$ gives 
\[ \cosh(X/2) \int_{0}^{X/2} w(x)~ dx~ 
  =~ \int_{0}^{X/2} w(x+ X/2)~ dx~ \leq~
 \int_{X/2}^X  w(x)~ dy
\]
and hence
\begin{equation} \label{half}
 \int_{0}^{X/2} w(x)~ dx~ \leq~
 \frac{1}{\cosh(X/2)} \int_{X/2}^X  w(x)~ dy.
\end{equation}
Since $p$ is decreasing
\[  \int_0^X p(x) \cdot w(x)~ dx~ 
\leq~ p(0)~ \int_{0}^{X/2} w(x)~ dx~ 
+~  p(X/2)~ \int_{X/2}^{X} w(x)~ dx. 
\]
Combining this with (\ref{half}) gives the claim. 
\end{proof}

\begin{prop} \label{Convex2}
Let $p:[0,X] \rightarrow {\mathbb R}^+$ be an increasing 
integrable function. Then  
\[ \int_{0}^{X}~  w(x)~ dx~ \leq~\frac{2}{p(X/2)}
       \int_{0}^X w(x) \cdot p(x)~ dx.
\]
\end{prop}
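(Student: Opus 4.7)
The plan is to mirror the proof of Proposition \ref{Convex} by first showing that the mass of $w$ on $[0,X]$ is comparable to its mass on the right half $[X/2, X]$, and then using monotonicity of $p$ on that right half to insert the weight.

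First, I would invoke Lemma \ref{SuperAdditive} in exactly the same way as in the proof of Proposition \ref{Convex}: applying it with $y = X/2$ and using $\cosh(X/2) \geq 1$ yields
\[ \int_{0}^{X/2} w(x)\, dx~ \leq~ \frac{1}{\cosh(X/2)} \int_{X/2}^{X} w(x)\, dx~ \leq~ \int_{X/2}^{X} w(x)\, dx. \]
Adding $\int_{X/2}^X w(x)\, dx$ to both sides gives the key comparison
\[ \int_0^X w(x)\, dx~ \leq~ 2 \int_{X/2}^X w(x)\, dx. \]

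Next, since $p$ is increasing, for every $x \in [X/2, X]$ we have $p(x) \geq p(X/2)$, and therefore
\[ p(X/2) \int_{X/2}^X w(x)\, dx~ \leq~ \int_{X/2}^X w(x)\, p(x)\, dx~ \leq~ \int_0^X w(x)\, p(x)\, dx, \]
where the last inequality uses $w, p \geq 0$. Combining this with the previous display and dividing by $p(X/2)$ (which is strictly positive) yields the claimed bound.

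There is essentially no obstacle here: the proof is the natural "dual" of Proposition \ref{Convex}, trading the decreasing weight's contribution on $[0, X/2]$ for an increasing weight's contribution on $[X/2, X]$. The convexity input is used exactly once, to push the left-half $L^1$ mass onto the right half where $p$ is large.
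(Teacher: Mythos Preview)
Your proof is correct and follows essentially the same approach as the paper's: both use the ``half'' inequality coming from Lemma \ref{SuperAdditive} to bound $\int_0^X w$ by (at most) twice $\int_{X/2}^X w$, and then use monotonicity of $p$ on $[X/2,X]$ to insert the weight. The only cosmetic difference is that you apply $\cosh(X/2)\geq 1$ immediately to get the clean factor $2$, whereas the paper carries the constant $\frac{1}{\cosh(X/2)}+1$ one step further before bounding it by $2$.
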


\begin{proof}
From (\ref{half}) we have 
\[ \int_0^X w(x)~ dx~ \leq~ \left(\frac{1}{\cosh(X/2)}+ 1 \right) 
   \int_{X/2}^X w(x)~ dx. \]
Since $p$ is increasing, we have 
\[    \int_{X/2}^X w(x)~ dx~ \leq~ \frac{1}{p(X/2)} 
   \int_{X/2}^X w(x) \cdot p(x)~ dx.
\]
By combining these inequalities and using the fact
that $\cosh(X/2) \geq 1$, we obtain the claim. 
\end{proof}


\section{Limits for  analytic eigenbranches} \label{SecLimits}

In this section we prove that each real-analytic eigenvalue
branch $E_t$ converges. The proof consists of three main steps.
First we prove that $t^2 \cdot E_t$ converges as $t$ tends to zero. 
We use this to then prove that $t^{2k} E_t$ converges for some $k<1$.
Finally, we use this to prove that $E_t$ converges. At each new stage, the 
previous estimate is used to control $h(t)$ along the eigenbranch.

\subsection{Convergence relative to $t^2$}

\begin{prop} \label{TrivialProp}
Let $E_{t}$ be any eigenbranch. 
Then
\[   \lim_{t \rightarrow 0^+}~  t^2  \cdot E_{t}  \]
exists and is finite. 
\end{prop}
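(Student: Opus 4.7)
The plan is to derive the differential inequality $\dot E_t \geq -(2C + 2/t)\,E_t$ for $t\in(0,t_0]$ and then to apply Lemma~\ref{ExpTrick} with $\rho(t) = 2C + 2/t$. Since $\int_{t}^{t_0}\rho(s)\,ds = 2C(t_0-t) + 2\log(t_0/t)$, the lemma produces convergence of $F(t) = e^{-2C(t_0-t)}(t/t_0)^{2}E_t$; dividing by the positive limit $e^{-2Ct_0}$ shows that $t^2 E_t$ itself has a finite limit as $t\to 0^+$.

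Let $U$ be the elliptical neighborhood of the slit of radius $r_0$. With $N(\psi)=1$ and $q(\psi)=E$, Proposition~\ref{SupportAway} gives $\dot q_{M\setminus U}(\psi) - E\,\dot N_{M\setminus U}(\psi) \geq -2CE$, so it suffices to show that $\dot q_U(\psi) - E\,\dot N_U(\psi) \geq -2E/t$. Starting from \refeq{qDotEst} and \refeq{NDot}, I would substitute the Mathieu expansion
\[
\int_{S^1}|\partial_\theta \psi|^2\,d\theta \;=\; \sum_i b_i(h)\,|u_i(x)|^2 \;-\; h^2\int_{S^1}|\psi|^2\cos^2\theta\,d\theta
\]
of Lemma~\ref{Decomp} into $\tfrac{1}{t}\int_{U_t} |\partial_\theta \psi|^2/\cosh^2 x$, using the identity $h^2/t = tE$. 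This rewrites $\dot q_U - E\dot N_U$ as the sum of a term $-\tfrac{1}{t}\sum_i b_i \int |u_i|^2/\cosh^2 x$, a term $tE\int_{U_t}|\psi|^2\cos^2\theta\left(\tfrac{1}{\cosh^2 x} + \tanh^2 x\right)\,dx\,d\theta$, and $-tE\int_{U_t}|\psi|^2\sin^2\theta\,dx\,d\theta$.

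The decisive observation is the algebraic identity $\tfrac{1}{\cosh^2 x} + \tanh^2 x = 1$. Combined with $\cosh^{-2}\leq 1$ in the first sum and a second use of Lemma~\ref{Decomp} to identify $\sum_i b_i \int_0^{Y_t}|u_i|^2 \,dx = q_U^\theta + h^2\int_{U_t} |\psi|^2\cos^2\theta\,dx\,d\theta$, the two $\cos^2\theta$ pieces cancel exactly and one is left with
\[
\dot q_U(\psi) - E\,\dot N_U(\psi) \;\geq\; -\frac{q_U^\theta(\psi)}{t} \;-\; tE\int_{U_t}|\psi|^2\sin^2\theta\,dx\,d\theta,
\]
where $q_U^\theta(\psi) := \int_{U_t}|\partial_\theta\psi|^2\,dx\,d\theta$. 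Each term is bounded below by $-E/t$: $q_U^\theta \leq q(\psi) = E$, and by \refeq{NEllipse}, $\int_{U_t}|\psi|^2\sin^2\theta\,dx\,d\theta \leq N_U(\psi)/t^2 \leq 1/t^2$. This yields $\dot q_U - E\dot N_U \geq -2E/t$ and completes the derivation of $\dot E \geq -(2C + 2/t)E$. The main obstacle is spotting the cancellation coming from the identity $1/\cosh^2 x + \tanh^2 x = 1$; treating $\dot q_U$ and $\dot N_U$ separately with crude bounds would force a correction larger than $O(1/t)$, and Lemma~\ref{ExpTrick} would then yield only the weaker conclusion that $t^{2+\epsilon}E_t$ converges for some $\epsilon > 0$.
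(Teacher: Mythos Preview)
Your argument is correct and reaches the same differential inequality $\dot E \geq -(2C+2/t)E$ as the paper, but you have taken a considerably longer path. The paper never invokes the Mathieu expansion or the identity $1/\cosh^2 x + \tanh^2 x = 1$; it simply bounds $\dot q_U$ and $\dot N_U$ \emph{separately}. From \refeq{qDotEst} and $\cosh^2 x \geq 1$ one gets $\dot q_U(\psi) \geq -\tfrac{1}{t}\int_{U_t}|\partial_\theta\psi|^2\,dx\,d\theta \geq -\tfrac{1}{t}q_U(\psi)$ directly, and from \refeq{NDotEst} together with the trivial inequality $\sin^2\theta \leq \sinh^2 x + \sin^2\theta$ and \refeq{NEllipse} one gets $\dot N_U(\psi) \leq t\int_{U_t}|\psi|^2\sin^2\theta\,dx\,d\theta \leq \tfrac{1}{t}N_U(\psi)$. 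Corollary~\ref{GlobalReduction} with $\alpha=\gamma=1/t$, $\beta=0$ then yields the same inequality in two lines.

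In particular, your closing remark is mistaken: treating $\dot q_U$ and $\dot N_U$ separately with these ``crude'' pointwise bounds already gives exactly the $2/t$ correction, not something larger. The cancellation you discovered is real but unnecessary here; the $\cos^2\theta$ contributions you carefully tracked through the Mathieu expansion are precisely what the paper throws away immediately by using $\sin^2\theta \leq \sinh^2 x + \sin^2\theta$ in \refeq{NEllipse}. Your route does have the virtue of making the structure of the cross term $\dot q_U - E\dot N_U$ more transparent, and the identity you used reappears in spirit in the finer estimates of \S\ref{SecLimits}, but for this particular proposition the paper's two-line argument is strictly simpler.
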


\begin{proof}
Since $\cosh^2(x)\geq 1$, by comparing (\ref{qEllipse})
and  (\ref{qDotEst}) we find that
\[ t \cdot \dot{q}_U(\psi)~ \geq~ -q( \psi ) \]
and by comparing  (\ref{NEllipse}) and  (\ref{NDotEst}) 
\[ t \cdot \dot{N}_U( \psi)~ \leq~ N( \psi). \]
Hence by Corollary \ref{GlobalReduction} we have
\begin{equation} \label{Trivial} 
   \dot{E}~ \geq~  -\left( C~ +~ \frac{2}{t}\right) E    
\end{equation}
for some constant $C$. Lemma \ref{ExpTrick} then allows to conclude 
since in this case $F(t)=Ct^2E_t$ for some constant $C$. 
%
%
%
%
\end{proof}

\subsection{Convergence relative to $t^{2k}$} 

By Proposition \ref{TrivialProp}, the parameter 
$h= t \cdot \sqrt{E}$ in the radial Mathieu equation 
associated to $\psi$ is uniformly bounded in $t$.
This will allow us to prove the following. 

\begin{thm} \label{Bootstrap}
Let $E_{t}$ be any eigenbranch. 
Then there exists $k<1$ such that
\[   \lim_{t \rightarrow 0^+}~  t^{2k}  \cdot E_{t} \]
exists and is finite. 
\end{thm}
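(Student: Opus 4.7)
The plan is a bootstrap from Proposition \ref{TrivialProp}: knowing $h(t) := t\sqrt{E_t}$ is uniformly bounded, I would sharpen the trivial estimate (\ref{Trivial}) to
\[
\dot E_t \geq -\left(\frac{2k}{t} + g(t)\right)E_t
\]
for some $k<1$ and $g \in L^1(0, t_0)$. Applying Lemma \ref{ExpTrick} with $\rho(t) = 2k/t + g(t)$ then yields that $F(t) = E_t\cdot(t/t_0)^{2k}\cdot \exp\!\left(-\int_t^{t_0} g(s)\,ds\right)$ converges as $t \to 0^+$, and since the exponential factor tends to a positive constant, $t^{2k}E_t$ converges.

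To obtain the sharpening I would decompose $\psi = \sum_i u_i(x)\, v_{i,h}(\theta)$ on the elliptical neighborhood via angular Mathieu functions and use Lemma \ref{Decomp} to rewrite the ratios driving $\dot q_U/q_U$ and $\dot N_U/N_U$ (via (\ref{qEllipse})--(\ref{NDot})) as sums of weighted one-dimensional integrals of the squared radial Mathieu functions $u_i^2$. Since $h \leq H$ is bounded, all but finitely many angular eigenvalues $b_i(h)$ exceed $2h^2 + 1/2$, so the convexity hypothesis of Section \ref{ConvexitySection} applies to $u_i^2$ on at least an interval $[0,X_*]$ of length independent of $t$; for $b_i$ larger still, convexity extends all the way to $[0, Y_t]$. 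Applying Proposition \ref{Convex} with the decreasing weight $p(x) = 1/\cosh^2 x$ and Proposition \ref{Convex2} with the increasing weight $p(x) = \sinh^2 x$ then converts the resulting concentration of $u_i^2$ at large $x$ into gain factors of order $\sqrt{t}$ (since $\cosh(Y_t/2) \sim t^{-1/2}$), which is the mechanism producing the strict inequality $k<1$.

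The finitely many low modes, with $b_i(h) \leq 2h^2 + 1/2$, must be handled separately: the radial ODE (\ref{UODE}) has coefficients uniformly bounded on every fixed compact $x$-interval, so the $u_i$ form a uniformly bounded family, and Lemma \ref{NonConcentration} rules out pathological concentration near the degenerate coordinate points $(x,\theta) = (0,0), (0,\pi)$ (the endpoints of the slit). Combining the high-mode and low-mode estimates via Corollary \ref{GlobalReduction} produces the desired inequality. The main obstacle is the low-mode analysis: establishing that their contribution to the logarithmic derivative is integrable in $t$ requires a compactness argument linking the radial profiles to convergence of $\psi$ on compact subdomains of $\Omega \setminus \{0\}$ (in the spirit of Theorem \ref{OrderedConvergence}), and it is precisely here that the strict improvement $k<1$ (rather than $k=1$) is delicate.
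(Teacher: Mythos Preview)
Your overall strategy matches the paper's: bootstrap from the boundedness of $h(t)$, decompose via Mathieu functions, split into high and low modes, use convexity for the high modes and a compactness argument for the low modes, and feed the result into Corollary \ref{GlobalReduction} and Lemma \ref{ExpTrick}. However, two points in your outline are off.

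First, the mechanism you describe for producing $k<1$ is not the right one. You cannot apply the convexity Propositions on $[0, Y_t]$ and invoke $\cosh(Y_t/2) \sim t^{-1/2}$: the condition $b_i - h^2\cosh^2(x) \geq 1/2$ fails long before $x = Y_t$, since $h^2\cosh^2(Y_t) \sim E_t\, r_0^2$ is not even bounded. What actually works---and what the paper does---is to apply Proposition \ref{Convex2} only on a \emph{fixed} interval (the paper takes $X_* = \sinh^{-1}(1)$), obtaining merely a fixed constant $M$ with $\int_0^{Y_t} u_i^2\,dx \leq M\int_0^{Y_t} u_i^2 \sinh^2 x\,dx$. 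A fixed $M$ is already enough: it yields $t\,\dot N_U(\psi) \leq \kappa\, N(\psi)$ with $\kappa = M/(M+1) < 1$, and combining this with the \emph{unchanged} trivial bound $t\,\dot q_U \geq -q$ gives $\dot E \geq -\bigl(C + (1+\kappa)/t\bigr)E$, hence $k = (1+\kappa)/2 < 1$. No $\sqrt{t}$ gain is needed or obtained here; that refinement belongs to the next step (Theorem \ref{Convergence}), where the improved control of $h$ from Corollary \ref{hControl} permits convexity on a growing interval $[0, X_t]$ with $\cosh X_t = t^{-\epsilon}$.

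Second, the low-mode analysis is simpler than you suggest. Neither Lemma \ref{NonConcentration} nor any eigenfunction-level compactness is required. On the fixed interval $[0, X_*]$, the ratio $\int_0^{X_*} u_i^2\,dx \big/ \int_0^{X_*} u_i^2 \sinh^2 x\,dx$ is, for each boundary condition, a continuous function of the bounded parameters $(h, b_i)$ (the solution of (\ref{UODE}) is unique up to scale and cannot vanish on an interval), hence bounded by compactness of the parameter set. This is precisely the content of the paper's one-line remark that ``the solution of the ordinary differential equation with a fixed boundary condition depends continuously on parameters.'' Note also that the paper improves only $\dot N_U$; your proposed simultaneous improvement of $\dot q_U$ is unnecessary at this stage and would already run into the $i=0$ mode, which needs the separate treatment of \S 6.4.
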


\begin{proof}
It suffices to show that  there exists $\kappa< 1$ 
so that 
\begin{equation} \label{BootEstimate}
 t \cdot\dot{N}_U(\psi)~ \leq~ \kappa \cdot 
N(\psi). 
\end{equation}
For then we could argue as in the proof of Proposition 
\ref{TrivialProp} where the `2' that appears in
(\ref{Trivial}) is replaced by $\kappa +1$.  
In particular, the desired $k$ equals $(\kappa + 1)/2$.    

\begin{lem} \label{NonTrivialU}
There exists $M>0$ such that
\[  \int_{U_t} |\psi|^2~ dx d \theta~
 \leq~ M \int_{U_t} |\psi|^2~ \sinh^2 x~ dx d \theta. \]
\end{lem}

Assuming the lemma, we finish the proof of the theorem. 
Let $0 < \kappa <1$ be such that $M = \kappa/ (1- \kappa)$.
Then  
\[
(1- \kappa) \int_{U_t} |\psi|^2 \sin^2\theta \,dxd\theta~
\leq~
(1- \kappa) \int_{U_t} |\psi|^2  \,dxd\theta~
\leq~
 \kappa \int_{U_t} |\psi|^2 \sinh^2 x\,dxd\theta 
\]
and hence 
\[ 
\int_{U_t} |\psi|^2 \sin^2\theta ~ dxd\theta~ %
\leq\, 
 \kappa \int_{U_t} |\psi|^2 \sinh^2 x~ dxd\theta~
  +~ \kappa \int_{U_t} |\psi|^2 \sin^2 \theta~ dxd\theta. 
\]
Estimate (\ref{BootEstimate}) follows then by comparing
(\ref{NEllipse}) and (\ref{NDotEst}). 
\end{proof}

\begin{proof}[Proof of Lemma \ref{NonTrivialU}]
By Lemma \ref{Decomp}, it suffices to prove that
there exists $M$ so that for all $i$ 
\begin{equation}\label{IndividualBound}
\int_{0}^{Y_{t}}~ |u_i|^2~ dx~ 
\leq~  M \int_{0}^{Y_{t}} |u_i|^2~ \sinh^2 x~ dx.
\end{equation}
Let $p(x)= \sinh^2(x)$,
and let $X= \sinh^{-1}(1)$. 
Since $p$ is increasing on $[0, \infty)$,
the infimum of $p(x)$ over $[X, \infty)$
equals $p(X)=1$. In particular, if $t \leq r_0$, then
$Y_t \geq X$, and we obtain 
\begin{equation}\label{Largex}
 \int_{X}^{Y_{t}}|u_i|^2~ p(x)~ dx~ \geq~
    \int_{X}^{Y_{t}} |u_i|^2~ dx. 
\end{equation}

By Proposition \ref{TrivialProp},  $h=h(t)$ is uniformly bounded.  
Choose $i_0 \in {\mathbb Z}^+$ so that $i_0^2 \geq 2 h^2(t)+ 1/2$ for
all sufficiently small $t$.  Then since $b_i(t) \geq i^2$---see
Appendix \ref{MathieuAppendix}---and $\cosh^2(X)=2$, we have that
for all $x \in [0,X]$ and $i \geq i_0$ 
\[   b_i(t)~ - h(t)^2 \cdot \cosh^2(x)~ \geq~ \frac{1}{2}. \]
Since $\psi$ satisfies either Neumann or Dirichlet conditions 
along the slit, we have either $u'_i(0)=0$ or $u_i(0)=0$ for all $i$.
Note also that $p$ is increasing. 

Thus, for $i \geq i_0$, we may apply Proposition \ref{Convex2} 
with $w=u_i^2$ to find that
\[
\int_{0}^X |u_i|^2~ dx~
\leq~ \frac{2}{p(X/2)}
\int_{0}^X |u_i|^2~ p(x)~ dx.
\]
The lemma is thus proved for $i \geq i_0$ large enough.

For $i <i_0$, we note that by 
Proposition \ref{TrivialProp}, $h$ is bounded,
and hence $b_i$ is bounded. Thus, the claim for $i < i_0$ follows
from the fact that the solution of the ordinary differential 
equation (\ref{UODE}) with a fixed boundary condition depends 
continuously on parameters.
\end{proof}

In the next section we make crucial 
use of the following variant of Theorem \ref{Bootstrap}.
\begin{coro} \label{hControl}
Let $h(t)= t \cdot \sqrt{E_t}$. 
There exists $\epsilon_0>0$ so that  
\[   \lim_{t \rightarrow 0}~ t^{-\epsilon_0}h(t)~ =~ 0. 
\]
\end{coro}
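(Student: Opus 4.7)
The plan is to derive the corollary directly from Theorem \ref{Bootstrap} by a short algebraic manipulation, since $h(t)^2 = t^2 E_t$ and Theorem \ref{Bootstrap} already gives polynomial control on $E_t$ of a strictly better order than $t^{-2}$.

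First I would unpack the definition: showing $t^{-\epsilon_0} h(t) \to 0$ is equivalent to showing $t^{-2\epsilon_0} h(t)^2 \to 0$, which in turn is the same as $t^{2 - 2\epsilon_0} E_t \to 0$. So the task reduces to finding some $\epsilon_0 > 0$ for which $t^{2 - 2\epsilon_0} E_t$ vanishes in the limit.

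Next I would invoke Theorem \ref{Bootstrap}, which produces $k < 1$ such that $t^{2k} E_t$ converges to a finite limit as $t \to 0^+$. In particular there is a constant $C$ with $t^{2k} E_t \leq C$ for all sufficiently small $t$. Writing
\[
t^{2 - 2\epsilon_0} E_t ~=~ t^{2 - 2\epsilon_0 - 2k} \cdot \bigl( t^{2k} E_t \bigr) ~\leq~ C \cdot t^{2(1-k) - 2\epsilon_0},
\]
I see that it suffices to pick $\epsilon_0 \in (0, 1-k)$ so that the exponent $2(1-k) - 2\epsilon_0$ is strictly positive. The canonical choice is $\epsilon_0 = (1-k)/2$, which is strictly positive precisely because Theorem \ref{Bootstrap} guarantees $k < 1$; with this choice the exponent equals $1 - k > 0$, and the right-hand side tends to zero.

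There is essentially no obstacle here: the whole content of the corollary is encoded in the strict inequality $k < 1$ supplied by Theorem \ref{Bootstrap}, and the corollary just re-expresses it in terms of the natural Mathieu-equation parameter $h(t)$ that will be used in the next section. The only thing to check is that the bound $t^{2k} E_t \leq C$ (rather than convergence) is what is actually needed, which it plainly is.
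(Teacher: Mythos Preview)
Your argument is correct and is exactly what the paper intends: the corollary is stated without proof as a ``variant of Theorem \ref{Bootstrap}'', and your derivation---rewriting $t^{-\epsilon_0}h(t)\to 0$ as $t^{2-2\epsilon_0}E_t\to 0$ and using $t^{2k}E_t$ bounded with $k<1$---is the immediate unpacking the paper has in mind.
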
 


\subsection{Convergence of $E_t$}

\begin{thm} \label{Convergence}
Let $E_{t}$ be a real-analytic eigenbranch.
Then 
\[   \lim_{t \rightarrow 0^+}~   E_{t}  \]
exists and is finite. 
\end{thm}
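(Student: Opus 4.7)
The plan is to combine Corollary 4.10 with Lemma 4.7. It suffices to produce integrable functions $\alpha,\beta,\gamma$ on $(0,t_0]$ satisfying, along the eigenbranch,
\[
\dot q_U(\psi) \geq -\alpha(t)\, q_U(\psi) - \beta(t)\, E\, N_U(\psi), \qquad \dot N_U(\psi) \leq \gamma(t)\, N_U(\psi);
\]
Corollary 4.10 will then give $\dot E_t \geq -\rho(t) E_t$ with $\rho$ integrable, and Lemma 4.7 will produce the finite limit. The decisive improvement over Proposition 6.1 comes from Corollary 6.3: the Mathieu parameter $h(t)=t\sqrt{E_t}$ decays faster than $t^{\epsilon_0}$ for some $\epsilon_0>0$, so any expression of the form $h(t)^{\epsilon}/t$ with $\epsilon>0$ is integrable near zero.

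Expand $\psi(x,\theta)=\sum_{i\geq 0}u_i(x)\,v_i(\theta)$ in the Mathieu basis and use (\ref{qDotEst})--(\ref{NDotEst}) together with Lemma \ref{Decomp} to reduce the required estimates to one-dimensional integrals in the $u_i$. For $i\geq 1$ the bound $b_i(h)\geq 1$ for small $h$ makes the convexity cutoff $X_i$, defined by $b_i(h)=h^2\cosh^2(X_i)+1/2$, satisfy $\cosh(X_i)\geq c/h$; in particular $X_i\to\infty$ uniformly in $i\geq 1$ as $t\to 0$. Splitting $[0,Y_t]=[0,X_i]\cup[X_i,Y_t]$, I would apply Proposition \ref{Convex} with $p(x)=1/\cosh^2(x)$ on the first piece (the convexity condition $(u_i^2)''\geq u_i^2$ holds there) and use $1/\cosh^2(x)\leq Ch^2$ on the second, obtaining $\int_0^{Y_t}u_i^2/\cosh^2(x)\,dx\leq Ch^{1/2}\int_0^{Y_t}u_i^2\,dx$. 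Multiplying by $b_i$, summing over $i\geq 1$, and invoking $\sum_i b_i\int u_i^2\,dx\leq q_U(\psi)+ME N_U(\psi)$ (a consequence of Lemma \ref{Decomp} together with Lemma \ref{NonTrivialU}) controls the $i\geq 1$ part of the negative piece of $\dot q_U$ by $Ch^{1/2}t^{-1}(q_U+ME N_U)$; a parallel argument, using Proposition \ref{Convex2} with weight $\sinh^2$, handles the $i\geq 1$ part of $\dot N_U$.

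The mode $i=0$ is delicate because $b_0(h)\to 0$ and convexity fails. I would exploit the small-$h$ asymptotics of the Mathieu ground state: $b_0(h)=O(h^2)$ and $\int_{S^1}(v_0')^2\,d\theta\leq b_0(h)=O(h^2)$, so the angular derivative of the $i=0$ contribution is itself of order $h$. Coupled with the mode-zero version of Lemma \ref{NonTrivialU}, $\int u_0^2\,dx\leq M_0\int u_0^2\sinh^2(x)\,dx\leq M_0 N_U(\psi)/t^2$, this lets the $i=0$ terms in $\dot q_U$ and $\dot N_U$ be absorbed into $E N_U$ with coefficients of order $h(t)^{\epsilon}/t$, which are integrable by Corollary \ref{hControl}. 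The principal obstacle I anticipate is precisely this $i=0$ mode: without convexity the required integrable bound has to be extracted entirely from the vanishing of $b_0$ and $v_0'$ as $h\to 0$, combined with sharp integral inequalities among $\int u_0^2$, $\int u_0^2/\cosh^2 x$, and $\int u_0^2\sinh^2 x$; once this is in place, everything else assembles by Corollary \ref{GlobalReduction} and Lemma \ref{ExpTrick}.
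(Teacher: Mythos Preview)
Your overall architecture matches the paper's: reduce to local estimates via Corollary~\ref{GlobalReduction}, expand in Mathieu modes, and treat $i\geq 1$ by convexity using Corollary~\ref{hControl} to make the cutoff go to infinity. That part is fine and essentially identical to the paper's argument (the paper uses a single cutoff $X_t$ with $\cosh(X_t)=t^{-\epsilon}$ rather than your mode-dependent $X_i$, but this makes no real difference).

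The gap is in your treatment of the $i=0$ mode, and it is more serious than you suggest. The quantities $b_0$ and $v_0'$ are \emph{angular}; their vanishing buys you one factor of $h^2$ in the $i=0$ contribution to $\dot q_U$, but this factor is exactly what is needed to convert $\int u_0^2$ into $t^2E\int u_0^2\sinh^2x\leq E\,N_U$ via Lemma~\ref{NonTrivialU}. The net coefficient you obtain is $M_0/t$, \emph{not} $h(t)^\epsilon/t$, and $M_0/t$ is not integrable. For $\dot N_U$ the situation is worse: neither $b_0$ nor $v_0'$ appears at all in the mode-$0$ contribution $t\int u_0^2\,dx$, so their vanishing is irrelevant. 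You are left needing
\[
\int_0^{Y_t} u_0^2\,dx~\leq~ t^{\delta}\int_0^{Y_t} u_0^2\sinh^2 x\,dx
\]
for some $\delta>0$, i.e.\ an \emph{improvement} over Lemma~\ref{NonTrivialU} by a factor $t^{\delta}$. This cannot be extracted from angular information and cannot come from convexity.

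The paper obtains this sharp radial inequality by an ODE argument specific to $i=0$: since $b_0(h)=O(h^2)$, the radial equation $u_0''=(b_0-h^2\cosh^2 x)u_0$ is a small perturbation of $u_0''=0$ on $[0,X_t]$. Variation of constants (Duhamel) gives $u_0(x)=u_0(0)+u_0'(0)x+R(x)$ with $R$ controlled by $h^2\cosh^2(X_t)\cdot X_t^2$, which tends to zero by Corollary~\ref{hControl}. For the explicit linear function $ax+b$ one checks directly that $\int_0^{X}(ax+b)^2\,dx\leq C\,X\sinh^{-2}(X)\int_0^{X}(ax+b)^2\sinh^2 x\,dx$, and $X_t\sinh^{-2}(X_t)\sim t^{2\epsilon}|\ln t|\leq t^{\epsilon}$. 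This is the missing ingredient; without it the $i=0$ bounds you propose remain at the level of Proposition~\ref{TrivialProp} and do not close.
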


\begin{proof}
It suffices to show that there exists $\delta>0$ 
so that for all sufficiently small $t$
\begin{equation} \label{conq}
    \dot{q}_U(\psi)~ \geq~
        -~ t^{\delta-1} \cdot q_U( \psi)~ -~ 
         2t^{\delta-1} \cdot E \cdot N_U(\psi),
\end{equation}
and 
\begin{equation} \label{conN} 
   \dot{N}_U(\psi)~ \leq~
        t^{\delta-1} \cdot N_U(\psi).
\end{equation}
For then by Corollary \ref{GlobalReduction}, 
there would exist $C>0$ so that
\begin{equation}  \label{log}
{\dot{E}}~ \geq~ -\left( C~ +~ 4 \cdot t^{\delta-1}\right) E. 
\end{equation}
Since $t^{\delta-1}$ is integrable near $t=0$, the claim
would then follow from Lemma 
\ref{ExpTrick}.

By comparing (\ref{qEllipse}) to (\ref{qDot}) and
(\ref{NEllipse}) to (\ref{NDotEst}), we see
that to prove (\ref{conq}) and (\ref{conN}) it suffices to 
prove that there exists $\delta>0$ so that for sufficiently
small $t$ we have 
\begin{equation} \label{qco}
  \int_{U_t}~ \frac{|\partial_{\theta}
 \psi_t|^2}{\cosh^2 x}~ dxd \theta~
   \leq~ t^{\delta} \left( \int_{U_t}~
  |\partial_{\theta} \psi_t|^2~ dxd \theta~
  +~ 2 t^2 \cdot E \cdot \int_{U_t}
   \psi^2 \sinh^2 x~ dx d\theta \right)
\end{equation}
and
\begin{equation} \label{Nco}
  \int_{U_t}~  |\psi_t|^2~ dxd \theta~
 \leq~ t^{\delta}  \int_{U_t}~ | \psi_t|^2~ \sinh^2 x~ dxd \theta.
\end{equation}

These estimates can be further reduced to estimates 
of Mathieu functions as in \S \ref{ConvexitySection}.  
In order to state these estimates,
we first note that, by Corollary \ref{hControl}, 
there exists $\epsilon>0$ such that  
\begin{equation} \label{Epsilon}
  \lim_{t\rightarrow 0} t^{-2\epsilon} \cdot h(t)~ =~ 0. 
\end{equation}
We fix this $\epsilon$ in what follows. 

\begin{lem} \label{ConAfterSeparate}
Let $u_i$ be a solution to the radial Mathieu equation (\ref{UODE})
with $h=h(t)$ satisfying (\ref{Epsilon}). 
There exists $t_0$ so that if $t<t_0$, then for all $i \geq 0$
\begin{equation} \label{Nco2}
 \int_0^{Y_t} |u_i|^2~ dx~ \leq~  t^{\epsilon/2} 
   \int_{0}^{Y_t}~ |u_i|^2 \sinh^2 x~ dx. 
\end{equation}
and for $i  > 0$
\begin{equation} \label{qco2}
   \int_{0}^{Y_t}~ \frac{|u_i|^2}{\cosh^2 x}~ dx~
   \leq~ t^{\epsilon/2} \int_0^{Y_t}~ |u_i|^2~ dx 
\end{equation}
\end{lem}

Assuming this lemma, we finish the proof of 
Theorem \ref{Convergence}. Using 
(\ref{Decompweight}) we see that (\ref{Nco}) 
follows  immediately from (\ref{Nco2})
with $\delta=\epsilon/2$.

To verify (\ref{qco}) we use (\ref{DecompweightTheta}) 
to find that  
\[  
\int_{U_{t}} \frac{\left|\partial_{\theta}\psi \right|^2}{
    \cosh^2(x)~ } dx d \theta~
\leq~ \sum_{i=0}^{\infty}~ b_i
 \int_{0}^{Y_{t}} \frac{u_i^2(x)}{\cosh^2(x)}~ dx~ 
\] 
In Appendix \ref{MathieuAppendix} we show that 
$b_0 \sim \frac{1}{2} h^2$ for small $h$. 
Hence since $h^2=t^2 E$, we find that for small $t$ 
\begin{eqnarray*}  
 b_0 \int_{0}^{Y_{t}} \frac{u_0^2(x)}{\cosh^2(x)}~ dx~
 &\leq&  h^2 \int _{0}^{Y_{t}} u_0^2(x)~ dx  \\  
 &\leq&  E \cdot t^2 \cdot t^{\epsilon/2}
 \int _{0}^{Y_{t}} u_0^2(x)~ \sinh^2 x~ dx, \\
  &\leq&  t^{\epsilon/2} \cdot  t^2 \cdot E \int_{U_t} |\psi|^2 
     \sinh^2 x~ dx d\theta.
\end{eqnarray*}
Here we have used (\ref{Nco2}). Using  (\ref{DecompweightTheta})
and (\ref{qco2}),   we find that
\begin{eqnarray*}
 \sum_{i=1}^{\infty}~ b_i
 \int_{0}^{Y_{t}} \frac{u_i^2(x)}{\cosh^2(x)}~ dx 
  &\leq &  t^{\delta}~ \sum_{i=1}^{\infty} b_i
    \int_{0}^{Y_{t}} u_i^2(x)~ dx~  \\
 &\leq& t^{\delta}~ 
        \left( \int_{U_t} |\partial_\theta \psi|^2 dxd\theta\,
     +\,     h^2\int_{U_t} |\psi|^2 dxd\theta \right).
\end{eqnarray*}
Thus, since $h^2 =t^2 E$, and using \refeq{qco2} to estimate the last integral of the previous inequality, 
equation (\ref{qco}) follows with $\delta= \epsilon/2$.    
\end{proof}

\subsection{The individual estimates}

The remainder of this section is dedicated to proving Lemma
\ref{ConAfterSeparate}. The behavior of the radial Mathieu 
function $u_i$ for small $h$ is qualitatively different if
$i=0$ than if $i \neq 0$. We prove the cases $i \neq 0$
with Lemma \ref{NonZeroModes} and the case $i=0$ with 
Lemma \ref{ZeroModeForNorm}.  

\subsubsection{Estimates for $i\neq 0$}

\begin{lem}\label{NonZeroModes}
There exists some $t_0$ and some constants $C_1,C_2$ such that for any $t<t_0$ and 
for any $i \neq 0$, we have 
\begin{equation}\label{IndForDer}
\int_{0}^{Y_{t}} \frac{u_i^2(x)}{\cosh^2(x)}~ dx~ 
    \leq~  C_1 \cdot t^{\epsilon}
   \int_{0}^{Y_{t}}  u_i^2(x)~ dx,
\end{equation}
and 
\begin{equation}\label{IndForNorm}
\int_{0}^{Y_{t}} u_i^2(x)~ dx~ \leq~  C_2\cdot t^{\epsilon}
\int_{0}^{Y_{t}}  u_i^2(x)\sinh^2(x)~ dx.
\end{equation}
\end{lem}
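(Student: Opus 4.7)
The plan is to split the interval $[0, Y_t]$ into an inner piece $[0, X_0]$ where the convexity setup of Section \ref{ConvexitySection} applies to $w = u_i^2$, and an outer piece $[X_0, Y_t]$ where $1/\cosh^2(x)$ and $1/\sinh^2(x)$ are themselves pointwise small. I will choose
\[ X_0(t)~ =~ 2\epsilon\,|\log t|, \]
so that $\cosh(X_0/2) \sim \tfrac{1}{2}\,t^{-\epsilon}$ and $\sinh(X_0/2) \sim \tfrac{1}{2}\,t^{-\epsilon}$ for small $t$, while $\cosh^2(X_0) \lesssim t^{-4\epsilon}$. By \refeq{Epsilon} we have $h^2 = o(t^{4\epsilon})$, hence $h^2\cosh^2(X_0) \to 0$ as $t \to 0^+$.

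Next I verify the convexity hypothesis for $i \geq 1$. By Appendix \ref{MathieuAppendix}, the ordered eigenvalues satisfy $b_1(h) \to 1$ as $h \to 0$, so for $t$ small enough $b_i(h(t)) \geq b_1(h(t)) \geq 3/4$ uniformly in $i \geq 1$. Combined with $h^2\cosh^2(X_0) \leq 1/4$, this gives $b_i - h^2\cosh^2(x) \geq 1/2$ for all $x \in [0,X_0]$ and all $i \geq 1$. Then $(u_i^2)'' \geq 2 u_i u_i'' = 2(b_i - h^2\cosh^2(x))\, u_i^2 \geq u_i^2$, while $u_i^2 \geq 0$ and either Dirichlet or Neumann conditions at $0$ imply $(u_i^2)'(0) = 2 u_i(0) u_i'(0) \geq 0$. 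So $w = u_i^2$ is an admissible input for Propositions \ref{Convex} and \ref{Convex2} on $[0, X_0]$.

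For \refeq{IndForDer}, apply Proposition \ref{Convex} with the decreasing weight $p(x) = 1/\cosh^2(x)$ on $[0, X_0]$:
\[
\int_0^{X_0} \frac{u_i^2}{\cosh^2(x)}\,dx~ \leq~ \left(\frac{1}{\cosh(X_0/2)} + \frac{1}{\cosh^2(X_0/2)}\right)\int_0^{X_0} u_i^2\,dx~ \leq~ C\, t^{\epsilon}\int_0^{X_0} u_i^2\,dx.
\]
On the outer piece $[X_0, Y_t]$, pointwise $1/\cosh^2(x) \leq 1/\cosh^2(X_0) \leq C'\, t^{4\epsilon} \leq C'\, t^{\epsilon}$ (for small $t$), so $\int_{X_0}^{Y_t} u_i^2/\cosh^2(x)\,dx \leq C'\, t^\epsilon \int_{X_0}^{Y_t} u_i^2\,dx$. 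Adding these yields \refeq{IndForDer}. For \refeq{IndForNorm}, apply Proposition \ref{Convex2} with the increasing weight $p(x) = \sinh^2(x)$ on $[0, X_0]$:
\[
\int_0^{X_0} u_i^2\,dx~ \leq~ \frac{2}{\sinh^2(X_0/2)}\int_0^{X_0} u_i^2 \sinh^2(x)\,dx~ \leq~ C\, t^{2\epsilon} \int_0^{X_0} u_i^2 \sinh^2(x)\,dx,
\]
and on $[X_0, Y_t]$, $1 \leq \sinh^2(x)/\sinh^2(X_0) \leq C''\, t^{4\epsilon}\sinh^2(x)$. Both contributions are bounded by $t^{\epsilon}$ times $\int_0^{Y_t} u_i^2\sinh^2(x)\,dx$ for $\epsilon$ small.

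The main obstacle is choosing $X_0(t)$ with the right growth rate: too large and $h^2\cosh^2(X_0) \not\to 0$, breaking convexity; too small and the weights $\cosh(X_0/2)^{-1}, \sinh(X_0/2)^{-1}$ fail to deliver the gain $t^\epsilon$. The constraint $t^{-2\epsilon} h(t) \to 0$ from Corollary \ref{hControl} is precisely what makes the choice $X_0 = 2\epsilon|\log t|$ simultaneously satisfy both requirements, and the uniform positivity of $b_1(h)$ for small $h$ is what lets the argument treat all $i \geq 1$ on equal footing—in contrast to the $i = 0$ case, where $b_0 \sim h^2/2$ fails convexity and must be handled separately in Lemma \ref{ZeroModeForNorm}.
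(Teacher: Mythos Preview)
Your proof is correct and follows essentially the same approach as the paper: split at a point $X$ growing like $\epsilon|\log t|$, use the convexity estimates of \S\ref{ConvexitySection} on $[0,X]$ (valid since $b_i \geq 1$ for $i \neq 0$ and $h^2\cosh^2(X) \to 0$ by \refeq{Epsilon}), and use pointwise smallness of the weight on $[X,Y_t]$. The only difference is cosmetic: the paper defines the splitting point implicitly via $\cosh(X_t) = t^{-\epsilon}$, whereas your choice $X_0 = 2\epsilon|\log t|$ is roughly twice as large, which has the pleasant side effect of making $\cosh(X_0/2) \sim \tfrac12 t^{-\epsilon}$ so that Proposition~\ref{Convex} delivers the stated $t^{\epsilon}$ rate directly. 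One small phrasing slip: at the end you write ``for $\epsilon$ small,'' but $\epsilon$ is fixed by \refeq{Epsilon}; you mean ``for $t$ small,'' since $t^{2\epsilon}, t^{4\epsilon} \leq t^{\epsilon}$ once $t<1$. You might also note in passing that $X_0 < Y_t$ for small $t$ (this holds once $2\epsilon < 1$, which can be assumed without loss).
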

\begin{proof}
Define $X_t \in [0, \infty)$ implicitly by  
$\cosh(X_{t})= t^{-\epsilon}$. 
Then we have
\[  \int_{X_{t}}^{Y_{t}} \frac{u_i^2(x)}{\cosh^2(x)}~ dx~ 
\leq~ t^{2\epsilon} \int_{X_{t}}^{Y_{t}} u_i^2(x)~ dx.
\]
Since $t^{2\ep}=o(t^\ep)$, It remains to estimate the integral over $[0, X_t]$.

By (\ref{Epsilon}), we have that $h(t)^2 \cdot \cosh(X_t)^2$
tends to zero as $t$ tends to zero. Since $i \neq 0$, we 
also have $b_i \geq 1$. (See Appendix 
 \ref{MathieuAppendix}). Thus, for sufficiently small $t$,
\[ b_i(h(t))~ - h(t)^2 \cdot \cosh^2(X_t)~ \geq~  \frac{1}{2}. \]
Therefore, we can apply the estimates of \S \ref{ConvexitySection}
with $w=u^2$ and $X=X_t$. By applying Proposition \ref{Convex} 
with $p=\cosh^{-2}$, we find that
\[   \int_{0}^{X_{t}} \frac{u_i^2(x)}{\cosh^2(x)}~ dx~ \leq~   
 \frac{2}{\cosh(X_{t}/2)}
   \int_{0}^{X_{t}}  u_i^2(x)~ dx.
\]
Since $\cosh(X_t/2)=
 \sqrt{(\cosh(X_t)+1)/2}= \sqrt{(t^{-\epsilon}+1)/2}$,
we have
\[   \int_{0}^{X_{t}} \frac{u_i^2(x)}{\cosh^2(x)}~ dx~ \leq~   
 \frac{2 \sqrt{2} \cdot t^{\epsilon}}{1+ t^{\epsilon}}
   \int_{0}^{X_{t}}  u_i^2(x)~ dx.
\]
Estimate  (\ref{IndForDer}) then follows for $C_1=2\sqrt{2}$ and any  $t$ small enough so that 
$t^{2\ep}\leq C_1\cdot t^\ep.$

The proof of the estimate (\ref{IndForNorm}) is similar. 
From the definition of $X_t$ we have
\[  \int_{X_{t}}^{Y_{t}} u_i^2~ dx~ 
\leq~  \frac{2t^{2\epsilon}}{1-t^{2\epsilon}} 
  \int_{X_{t}}^{Y_{t}} u_i^2 \cdot \sinh(x)^2~ dx, 
\] 
and from Corollary \ref{Convex} we find that
\[
 \int_{0}^{X_{t}} u_i^2(x)~ dx~ \leq~ 
\frac{8t^\epsilon}{1-t^\epsilon} 
\int_{0}^{X_{t}} \sinh^{2}(x) \cdot u_i^2(x)~ dx, 
\]
the estimate follows with any $C_2>8$ and $t$ small enough so that 
$\frac{2t^{2\ep}}{1-t^{2\ep}}$ and $\frac{8}{1-t^{\ep}}$ are less than $C_2.$
\end{proof}

Since equation \refeq{IndForDer} implies equation \refeq{qco2}, and
 equation \refeq{IndForNorm} implies equation \refeq{Nco2},
the estimates of  Lemma \ref{ConAfterSeparate} 
are proven for any index $i>0.$ 
It remains to prove \refeq{Nco2} for $i=0.$ 

\subsubsection{The estimate for $i=0$}

Since $b_0(h)$ is real-analytic in $h^2,$ we can write  
\[   b_0(h)~ =~  a(h)\cdot h^2 \]
for some analytic function $a$. It follows from Theorem \ref{Bootstrap}
that $a(h(t))$ is bounded near $t=0$. 
The Mathieu equation for $i=0$ can thus be rewritten as
\begin{equation} \label{ZeroEquation2}
 u_0''(x)~ =~
 h(t)^2 \cdot \left( a(h(t))-\cosh^2(x)\right) \cdot
 u_0(x).
\end{equation}
We will treat as this equation as 
a perturbation of $u_0''=0$. To make
this precise, we set $t_0>0$ and define 
\[ M(X)~ =~ \sup \left\{~  |a(h(t))-\cosh^2(x)|~ 
    |~ 0\leq x \leq X  \ 
{\rm and} \  0 \leq t \leq t_0 \right\}.
\]

\begin{lem}  \label{ZeroEstimate}
Given a solution $u$ to (\ref{ZeroEquation2})
define $R$ by
\[ R(x)~ =~  u(x)~ -~ 
  \left( u(0)~ +~ u'(0)\cdot x \right). \]
Then for all $t \in (0,t_0)$, and for any positive weight $p,$ 
the function $R$ satisfies 
\[
  \int_{0}^X |R(x) |^2~ p(x)~ dx~ 
 \leq~
   h(t)^4 \cdot  M(X)^2 \cdot X^4 \cdot L(X) \cdot 
 \int_{0}^X   |u(x)|^2~ dx
\]
where
\[ L(X)~ =~ \sup_{x \in [0,X]} p(x).\]
\end{lem}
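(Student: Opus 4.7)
The plan is to exploit the fact that $R$ satisfies $R(0)=R'(0)=0$ together with $R''=u''$, and then read off an explicit integral representation of $R$ from the ODE \eqref{ZeroEquation2}. More precisely, since $R''(x)=u''(x)=h(t)^2\bigl(a(h(t))-\cosh^2(x)\bigr)u(x)$ and both $R$ and $R'$ vanish at $0$, a double integration (i.e.\ Taylor's theorem with integral remainder) yields
\[
  R(x)~=~\int_{0}^{x}(x-s)\,R''(s)\,ds~=~h(t)^{2}\int_{0}^{x}(x-s)\bigl(a(h(t))-\cosh^{2}(s)\bigr)u(s)\,ds.
\]

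From here I would estimate $R$ pointwise. Using $|a(h(t))-\cosh^{2}(s)|\leq M(X)$ and $x-s\leq X$ on $[0,X]$, we get
\[
  |R(x)|~\leq~h(t)^{2}\,M(X)\,X\int_{0}^{X}|u(s)|\,ds,
\]
and then the Cauchy--Schwarz inequality $\int_{0}^{X}|u(s)|\,ds\leq\sqrt{X}\bigl(\int_{0}^{X}|u(s)|^{2}\,ds\bigr)^{1/2}$ upgrades this to
\[
  |R(x)|^{2}~\leq~h(t)^{4}\,M(X)^{2}\,X^{3}\int_{0}^{X}|u(s)|^{2}\,ds.
\]

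The final step is to multiply by the weight $p$ and integrate over $[0,X]$: bounding $p(x)$ by $L(X)$ pointwise contributes one factor of $L(X)$, while the integration against $dx$ on $[0,X]$ contributes the fourth power $X^{4}=X^{3}\cdot X$, giving exactly the claimed inequality. There is no real obstacle here---the lemma is essentially a quantitative form of the observation that \eqref{ZeroEquation2} is a small perturbation of $u''=0$, and the hypothesis $a(h(t))$ bounded (from Theorem \ref{Bootstrap}) ensures that $M(X)$ is finite. The only mild care needed is to keep the dependence on $X$, $h(t)$, $M(X)$ and $L(X)$ separated in the bookkeeping, since these quantities will later be tuned as functions of $t$ in the application to the $i=0$ mode.
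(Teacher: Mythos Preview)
Your proof is correct and follows essentially the same approach as the paper: both obtain the Duhamel/Taylor integral representation $R(x)=h(t)^{2}\int_{0}^{x}(x-s)\bigl(a(h(t))-\cosh^{2}(s)\bigr)u(s)\,ds$, apply Cauchy--Schwarz, and then integrate against the weight $p$. The only cosmetic difference is that the paper applies Cauchy--Schwarz to the pair $(x-s,\,|u(s)|)$ on $[0,x]$ (getting $|R(x)|^{2}\le h^{4}M(X)^{2}(x^{3}/3)\int_{0}^{x}u^{2}$) before integrating in $x$, whereas you first bound $x-s\le X$ and extend the integral to $[0,X]$, then apply Cauchy--Schwarz to the pair $(1,\,|u|)$; both routes land on the same $X^{4}$ factor.
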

\begin{proof}
By applying the method of variation of constants (or,
equivalently, Duhamel's principle) 
we find that 
\[ R(x)~ =~ h(t)^2 \int_0^x (x-y) 
          \left(a(h(t))-\cosh^2(y) \right)~ u(y)~ dy. \] 
Thus, for $t < t_0$ and $0<x< X$
\[  |R(x) |^2~
     \leq ~ h^4 \cdot M(X)^2 \left( \int_0^x (x-y) 
         \cdot |u(y)|~ dy \right)^2. \]
By applying the Bunyakovsky-Cauchy-Schwarz inequality
we find that
\begin{eqnarray*}
  |R(x) |^2~
     & \leq & 
  h^4 \cdot M^2(X) \left| \int_0^x (x-y)^2 dy \right| 
      \cdot   \left| \int_0^x  u(y)^2~ dy \right| \\
& \leq &
  h^4 \cdot M^2(X) \cdot (|x|^3/3) \cdot 
 \left|\int_0^x  u(y)^2~ dy \right|
\end{eqnarray*}
for all $|x| \leq X$.
Thus,   
\begin{eqnarray*}
  \int_{0}^X |R(x) |^2~ p(x)~ dx 
     & \leq & 
  h^4 \cdot M^2(X) \cdot X^3 \cdot L(X) 
 \int_{0}^X \left|\int_0^x  u(y)^2~ dy \right|~ dx.  \\
\end{eqnarray*}
The desired estimate follows.
\end{proof}

The following completes the proof of 
Lemma \ref{ConAfterSeparate}.
\begin{lem}\label{ZeroModeForNorm}
Let $\epsilon>0$ be as in (\ref{Epsilon}). 
There exists some constant $C$ such that, if we let 
$u=u_0$ be a solution to the radial Mathieu 
equation  (\ref{UODE}) associated to $b_0(t)$ and $h(t)$, and 
if $u$ satisfies either $u(0)=0$ or $u'(0)=0$, 
then for any sufficiently small $t$ we have 
\[ 
\int_{0}^{Y_t} u^2~ \leq~ C \cdot t^{\epsilon}\,
\int_{0}^{Y_t} u^2(x)~ \sinh^2(x)~ dx.
\]
\end{lem}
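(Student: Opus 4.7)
The plan is to split $[0,Y_t]=[0,X_t]\cup[X_t,Y_t]$ for a well-chosen $X_t$ and handle each piece separately. On the outer piece, $\sinh^2 x$ is already large enough that the desired bound is immediate; on the inner piece, one treats \refeq{ZeroEquation2} as a small perturbation of $u''=0$ and uses Lemma \ref{ZeroEstimate} to compare $u$ with its linear part. I would define $X_t$ by $\sinh(X_t)=t^{-\epsilon}$, so $X_t\sim\epsilon|\log t|$. Then on $[X_t,Y_t]$ we have $\sinh^2 x\geq t^{-2\epsilon}$, whence
\[
\int_{X_t}^{Y_t} u^2\,dx\,\leq\, t^{2\epsilon}\int_{X_t}^{Y_t} u^2\sinh^2 x\,dx,
\]
and it remains to bound $\int_0^{X_t}u^2$ by $C\,t^\epsilon\int_0^{Y_t}u^2\sinh^2 x\,dx$.

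On $[0,X_t]$, write $u=L+R$ with $L(x)=u(0)+u'(0)x$. The boundary condition forces either $L\equiv u(0)$ (Neumann) or $L(x)=u'(0)x$ (Dirichlet). Using the asymptotics $\int_0^X\sinh^2 x\,dx\sim \tfrac12\sinh^2 X$ and $\int_0^X x^2\sinh^2 x\,dx\sim \tfrac12 X^2\sinh^2 X$ as $X\to\infty$, a direct computation in each boundary case gives
\[
\int_0^{X_t}L^2\,dx\,\leq\,\frac{C_0\,X_t}{\sinh^2(X_t)}\int_0^{X_t}L^2\sinh^2 x\,dx\,\leq\, C_0'\,|\log t|\,t^{2\epsilon}\int_0^{X_t}L^2\sinh^2 x\,dx,
\]
which is dominated by $t^\epsilon\int_0^{X_t}L^2\sinh^2 x\,dx$ for $t$ small.

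Next I would control $R$ with Lemma \ref{ZeroEstimate}. With $M(X_t)\leq C\cosh^2(X_t)\lesssim t^{-2\epsilon}$ and, by \refeq{Epsilon}, $h(t)^4=o(t^{8\epsilon})$, applying the lemma with $p\equiv 1$ and then with $p=\sinh^2 x$ yields
\[
\int_0^{X_t}R^2\,dx\,\leq\,o(t^{3\epsilon})\int_0^{X_t}u^2\,dx,\qquad \int_0^{X_t}R^2\sinh^2 x\,dx\,\leq\,o(t^{\epsilon})\int_0^{X_t}u^2\,dx,
\]
the polylogarithmic factor from $X_t^4$ being swallowed by spare powers of $t^\epsilon$. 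Substituting $u^2\leq 2L^2+2R^2$ and $L^2\sinh^2 x\leq 2u^2\sinh^2 x+2R^2\sinh^2 x$ into the inner $L$-estimate gives
\[
\int_0^{X_t}u^2\,dx\,\leq\, 4t^\epsilon\int_0^{X_t}u^2\sinh^2 x\,dx\,+\,o(t^\epsilon)\int_0^{X_t}u^2\,dx,
\]
and for small $t$ the last term absorbs into the left side. Adding back the far-zone estimate completes the proof.

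The main subtlety is the choice of $X_t$: a smaller $X_t$ sharpens the linearization (so $R$ shrinks) but weakens the outer tail bound, and conversely. The choice $\sinh(X_t)=t^{-\epsilon}$ is precisely the one that balances these pressures. Crucially, the scheme only works because \refeq{Epsilon} forces $h^4=o(t^{8\epsilon})$, supplying enough surplus powers of $t^\epsilon$ to swallow both $M^2\sim t^{-4\epsilon}$ and, when $p=\sinh^2 x$, the weight $L(X_t)=\sinh^2(X_t)\sim t^{-2\epsilon}$, while leaving polylogarithmic room for $X_t^4$.
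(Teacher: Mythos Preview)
Your proposal is correct and follows essentially the same route as the paper: the same splitting $[0,X_t]\cup[X_t,Y_t]$ (the paper defines $X_t$ via $\cosh(X_t)=t^{-\epsilon}$, which is asymptotically equivalent to your choice), the same linearization $u=L+R$ on the inner piece, the same two applications of Lemma~\ref{ZeroEstimate} with $p\equiv 1$ and $p=\sinh^2 x$, and the same direct estimate $\int_0^{X_t}L^2\leq \frac{CX_t}{\sinh^2(X_t)}\int_0^{X_t}L^2\sinh^2 x$ for each boundary case. The only difference is cosmetic bookkeeping of the absorption step.
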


\begin{proof}
As above, let $X_t \geq 0$ be defined by  
$\cosh(X_{t})= t^{-\epsilon}$.
Let $a=u'(0)$ and $b=u(0)$. Since 
$u(x)^2\leq 2 (ax+b)^2 +2 R(x)^2$, by
applying Lemma \ref{ZeroEstimate} 
with $p(x) \equiv 1$, we find that
\begin{equation} \label{UZeroPrelim}
  \int_{0}^{X_{t}}~ u^2(x)~ dx~ \leq~  
2 \int_{0}^{X_{t}}~ (ax+b)^2~ dx~ +~
 K(t) \int_{0}^{X_{t}}   u^2(x)~ dx
\end{equation}
where 
\[  K(t)~ =~ 2~ M^2(X_t )\cdot h(t)^4  \cdot X_{t}^4. \]
In other words,
\begin{equation} \label{UZeroPrelim2}
  \int_{0}^{X_{t}}~ u^2(x)~ dx~ \leq~  
 \frac{2}{1- K(t)} \int_{0}^{X_{t}}~ (ax+b)^2~ dx,~ 
\end{equation}
provided that $K(t)$ is less than $1.$
As $t$ tends to zero, $X_t \sim \epsilon |\ln t|$ 
tends to infinity, and hence 
$M(X_t) \sim \cosh^2(X_t) \sim t^{-2\epsilon}$.
It follows that 
$K(t) \sim \left(t^{-\epsilon} \cdot h \cdot\epsilon 
|\ln\,h|\right)^4$. Thus, by (\ref{Epsilon}) 
$K(t)$ tends to zero as $t$ tends to zero.
Therefore, for $t$ sufficiently small
\begin{equation}\label{UZeroStep1}
 \int_{0}^{X_{t}}~ u^2(x)~ dx~ 
\leq~ 4 \,\int_{0}^{X_{t}}~ (ax+b)^2~ dx~.
\end{equation}

If $u(0)=0$, then $ax+b=u'(0) \cdot x$, and if 
$u'(0)=0$, then $ax+b=u(0)$. A straightforward 
calculation gives a constant $C$ such that 
for all sufficiently large $X$, we have 
\[ \int_{0}^{X} 1~ dx~  \leq~
 \frac{C \cdot X}{\sinh^2(X)}\int_0^{X}  \sinh^2(x)\,dx. \]
and 
\[ \int_{0}^{X} x^2~ dx~  \leq~
 \frac{ C \cdot X}{\sinh^2(X)}\int_0^{X} x^2 \sinh^2(x)\,dx. \]
Thus, since $\lim_{t \rightarrow 0} X_t= \infty$,   
for small $t$ we have 
\begin{equation} \label{StandardCalc}
 \int_{0}^{X_{t}}~ (ax+ b)^2~ dx~ 
\leq~  \frac{C \cdot X_t}{\sinh^2(X_t)}  
 \int_0^{X} (ax+b)^2 \sinh^2(x)~ dx.
\end{equation}
By applying Lemma \ref{ZeroEstimate} with $p(x)=\sinh^ 2(x)$,
we  obtain 
\[   \int_0^{X_t} (ax+b)^2 \sinh^2(x)\,dx~
 \leq~ 2\int_0^{X_t} u(x)^2 \sinh^2(x)\,dx~
     +~ \tilde{K}(t)  
     \int_0^{X_t} u(x)^2\,dx 
\]
where $\tilde{K}(t)= K(t) \cdot \sinh^2(X_t)$. 
By combining this with (\ref{UZeroStep1}) and (\ref{StandardCalc}) we have
\[
\int_{0}^{X_{t}}~ u^2(x)~ dx~ \leq~ 
\frac{8 C \cdot X_t}{\sinh^2(X_t)}
 \int_0^{X_t} u^2(x) \sinh^2(x)~ dx~ 
+~ 
 4C \cdot X_t \cdot K(t)\int_0^{X_{t}}u^2(x)~ dx. 
\]
Arguing as above, one sees that $X_t \cdot K(t)$ 
tends to zero as $t$ tends to zero. 
It follows that for all sufficiently small $t$, we have
\[
\int_{0}^{X_t} u^2~ dx~ \leq~
  \frac{9C \cdot X_t}{\sinh^2(X_t)}
\int_0^{X_t} u^2(x) \sinh^2(x)~ dx. 
\]
Note that  $C X_t/\sinh^2(X_t) \leq t^{\epsilon}$ 
for sufficiently small $t$.
 
Since 
\[ \inf_{[X_t, Y_t]} \sinh^2(x)~
   =~ \sinh^2(X_t)~ =~ t^{-2\epsilon}-1, \]
we also have
\[
\int_{X_t}^{Y_{t}} u^2(x)~ dx~  
\leq~ \frac{t^{2\epsilon}}{1-t^{2\epsilon}}
    \int_{X_t}^{Y_t} u^2(x)\sinh^2(x) dx, \]
The claim follows.
\end{proof}


\section{Generic simplicity for polygons}

In this section we combine the convergence of analytic
eigenvalue branches with the convergence of eigenfunctions
to generalize our earlier results \cite{HJ1} on spectral 
simplicity of simply-connected polygons to several settings.

\subsection{Slits and  simplicity}

Our results on spectral simplicity 
for polygons depend on the following.  

\begin{prop} \label{SlitSimplicity}
Let $\Sigma_t$ be a slit of length $2t$ 
centered at a point $p$ belonging to the interior 
of a Lipschitz domain $\Omega \subset {\mathbb R}^2$. 
Let $D$ be a measurable subset of $\partial \Omega$
and let $D'_t \subset \Omega_{\Sigma}$ be either $D$ or 
$D\cup \Sigma$. If the spectrum of $q$ on 
$H^1_D(\Omega)$ is simple, then for all 
but countably many $t$, the spectrum of 
$q$ on  $H^1_{D'_t}(\Omega_{\Sigma_t})$ is 
simple. 
\end{prop}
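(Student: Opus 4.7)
The plan is to parameterize the spectrum of $q$ on $H^1_{D'_t}(\Omega_{\Sigma_t})$ by the countable family of real-analytic eigenvalue branches $E_k:(0,t_0]\to\R$ with $L^2(M,dm_t)$-orthonormal eigenfunctions $\psi_k(t)$ furnished by Proposition \ref{Analytic} (orthonormality of the analytic family is standard for analytic perturbations of self-adjoint type). The same setup applies to both allowed choices $D'_t = D$ and $D'_t = D\cup\Sigma$, since the boundary condition on the slit is held constant in $t$. Simplicity at $t$ is equivalent to $E_j(t) \neq E_k(t)$ for all $j \neq k$, so the exceptional set is $\bigcup_{j\neq k} Z_{jk}$, where $Z_{jk} := \{\,t \in (0,t_0] : E_j(t) = E_k(t)\,\}$. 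Each $Z_{jk}$ is the zero set of the real-analytic function $E_j - E_k$, so it is either all of $(0,t_0]$ or a discrete (hence countable) subset. Thus the proof reduces to ruling out $E_j \equiv E_k$ for every $j \neq k$.

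Suppose, toward a contradiction, that $E_j \equiv E_k$ for some $j \neq k$. Theorem \ref{Main} then forces a common limit $E^0 = \lim_{t \to 0^+} E_j(t) = \lim_{t \to 0^+} E_k(t)$, which is an eigenvalue of $q$ on $H^1_D(\Omega)$. Pick a sequence $t_n \to 0^+$ and transport the eigenfunctions to $\Omega_{\Sigma_{t_n}}$ via the isometry $\tilde{\phi}_{t_n}$ of \S 4; the resulting functions are $L^2(\Omega_{\Sigma_{t_n}})$-normalized with eigenvalues $E_j(t_n) = E_k(t_n) \to E^0$. By Theorem \ref{OrderedConvergence}, a subsequence converges in $L^2(\Omega)$ to $L^2$-normalized, nontrivial eigenfunctions $\psi_j^\infty, \psi_k^\infty \in H^1_D(\Omega)$ of $q$ with eigenvalue $E^0$. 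The orthogonality $\int_{\Omega_{\Sigma_{t_n}}} \psi_j(t_n)\,\psi_k(t_n)\,dm = 0$ passes to the limit---the slit is Lebesgue-null, so these integrals coincide with the corresponding ones over $\Omega$---yielding $\langle \psi_j^\infty, \psi_k^\infty \rangle_{L^2(\Omega)} = 0$. Thus $E^0$ admits two linearly independent $L^2$-normalized eigenfunctions in $H^1_D(\Omega)$, contradicting the hypothesis that the spectrum of $q$ on $H^1_D(\Omega)$ is simple.

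The one subtlety to track is the identification of ambient $L^2$ spaces: orthogonality is produced in $L^2(M, dm_t)$, whereas the contradiction takes place in $L^2(\Omega)$. Since $\tilde{\phi}_t$ is an isometry between $(M, g_t)$ and $(\Omega_{\Sigma_t}, dx^2+dy^2)$, it pushes $dm_t$ forward to Lebesgue measure, and the slit being Lebesgue-null means $L^2(\Omega_{\Sigma_t})$ and $L^2(\Omega)$ coincide as inner-product spaces. Hence orthogonality is preserved through the chain of identifications and survives the $t \to 0$ limit. Modulo this bookkeeping, the rest of the argument is routine, and the countability of $\bigcup_{j \neq k} Z_{jk}$ as a countable union of discrete sets finishes the proof.
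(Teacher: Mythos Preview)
Your proof is correct and follows essentially the same approach as the paper's: both use analyticity of the eigenbranches to reduce to ruling out identically coinciding branches, then invoke Theorem \ref{Convergence} (equivalently Theorem \ref{Main}) for convergence of the eigenvalue and Theorem \ref{OrderedConvergence} for convergence of the orthogonal eigenfunctions, obtaining a multiplicity in $H^1_D(\Omega)$ and hence a contradiction. Your version is slightly more explicit about the zero-set decomposition $\bigcup_{j\neq k} Z_{jk}$ and the $L^2$ identifications through $\tilde{\phi}_t$, but the substance is identical.
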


\begin{proof}
By Theorem \ref{Analytic}, the eigenvalues vary
analytically for $t>0$. Hence it suffices to 
show that there does not exist a real-analytic 
eigenvalue branch $E_t$ such that the dimension of the 
associated eigenspace $V_t$ is greater than 1 for 
each $t>0$. 

Suppose that $\psi_t$ and  
$\psi^*_t \in V_t$ denote normalized real-analytic 
eigenfunction branches that are mutually orthogonal 
for each $t$. By Theorem \ref{Convergence}, the corresponding 
eigenvalue branch 
converges to some $E_0$ when $t$ goes to $0$. 
By Theorem \ref{OrderedConvergence}, 
$\psi_t$ and $\psi^*_t$ converge to eigenfunctions 
$\psi$ and $\psi^*$ on $\Omega_{\Sigma}$ with the 
same eigenvalue $E_0$.  Since $\psi_t$ and 
$\psi^*_t$ are normalized and orthogonal for each $t$, 
the limits $\psi$ and $\psi^*$ are orthogonal.  
But this contradicts the assumption that the spectrum of  
$\Omega_{\Sigma}$ is simple. 
\end{proof}

\begin{remk} \label{MultipleSlits}
The proof applies equally well if $\Omega$
is a slit domain with simple spectrum. 
Thus one can iterate the procedure, 
and prove that for all but countably choices
of slit lengths, the domain $\Omega$
slit along finitely many slits has simple
spectrum. Actually, if we take all the slits of the same length 
$t$, the spectrum is also simple for all but countably choices of 
$t$ but to prove this result one has to adapt 
Theorem \ref{Convergence} 
to the case of several slits. This is easily done using the 
localization principle and the estimates we have proved.
\end{remk}

By combining Theorem \ref{SlitSimplicity},
Remark \ref{MultipleSlits},
and the main result of \cite{HJ1}, we obtain the 
simplicity of the spectrum of the generic 
simply connected slit polygon. To be precise, 
let ${\mathcal S}_{n,k}$ be the set of simply connected 
$n$-gons $P$ with $k$ disjoint slits 
$\Sigma_1, \Sigma_2, \cdots, \Sigma_k$. 
Note that the vertices of $P$ and the endpoints 
of each slit $\Sigma_1, \Sigma_2, \cdots, \Sigma_k$, 
determine the slit polygon 
$P_{\Sigma_1 \cup \Sigma_2 \cup \cdots \cup \Sigma_k}$.  Thus, ${\mathcal S}_{n,k}$
may be naturally identified with an open subset 
of ${\mathbb R}^{2n+2k+2k}$. 
In particular, ${\mathcal S}_{n,k}$
inherits a natural affine structure and Borel measure. 
   
\begin{thm} \label{SlitSimple}
If $n \geq 4$ and $k \geq 0$, then almost every slit 
polygon in ${\mathcal S}_{n,k}$
has simple Dirichlet or Neumann spectrum. 
\end{thm}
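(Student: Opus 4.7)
The plan is to proceed by induction on the number of slits $k$. The base case $k=0$ is precisely the main theorem of \cite{HJ1}: almost every simply connected $n$-gon with $n \geq 4$ has simple Dirichlet (respectively Neumann) spectrum.

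For the inductive step, I would fix the conclusion for ${\mathcal S}_{n,k-1}$ and parametrize ${\mathcal S}_{n,k}$ by the $n$ polygon vertices together with the $k$ pairs of slit endpoints, viewing it as an open subset of $\R^{2n+4k}$. The natural projection ${\mathcal S}_{n,k} \to {\mathcal S}_{n,k-1}$ that forgets $\Sigma_k$, combined with the inductive hypothesis, reduces the problem via Fubini to the following statement: for any fixed background data $(P, \Sigma_1, \ldots, \Sigma_{k-1})$ whose $(k{-}1)$-slit polygon has simple spectrum, the set of endpoint pairs $(p_k, q_k) \in \R^4$ producing a $k$-slit polygon with non-simple spectrum has four-dimensional Lebesgue measure zero.

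To prove this fiberwise statement, I would reparametrize the last slit by $(c_k, \phi_k, t_k)$ encoding center, angle, and half-length. For any admissible $c_k$ and $\phi_k$, after a translation and rotation placing $\Sigma_k$ in the standard horizontal position, Proposition \ref{SlitSimplicity} (in the generalized setting of Remark \ref{MultipleSlits}) applies to the background slit polygon and asserts that the set of bad half-lengths $t_k$ is countable, hence null in one dimension. A Fubini argument in the coordinates $(c_k, \phi_k, t_k)$ then gives a null exceptional set in the admissible open chart, and the smooth change of variables back to the endpoint coordinates $(p_k, q_k)$ preserves the null property since its Jacobian is nonzero on its domain.

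The main obstacle is justifying the extension of Proposition \ref{SlitSimplicity} to iterated slit domains as claimed in Remark \ref{MultipleSlits}. This requires a multi-slit version of Theorem \ref{Convergence}, which I would establish by invoking the localization principle of Proposition \ref{SupportAway} and Corollary \ref{GlobalReduction}: the dominant contribution to $\dot{E}$ is supported in an arbitrarily small neighborhood of the slits, the neighborhoods of distinct slits can be chosen disjoint, and each individual contribution is controlled by the same elliptical-coordinate analysis carried out in Sections \ref{SecLimits} and \ref{SectionSeparation}. Summing these estimates slit-by-slit yields uniform control of the logarithmic derivative that is integrable at $t=0$, so that Lemma \ref{ExpTrick} again applies and each analytic eigenvalue branch converges. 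Measurability of the exceptional set in the Fubini argument is then automatic from the real-analytic dependence of the eigenvalues on all the parameters provided by Proposition \ref{Analytic}.
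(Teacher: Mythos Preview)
Your proposal is correct and follows essentially the same approach as the paper: reduce via Fubini to one-parameter families of slit lengths, invoke Proposition \ref{SlitSimplicity} extended to slit backgrounds as in Remark \ref{MultipleSlits} to get countably many bad lengths, and anchor everything in the main result of \cite{HJ1}. The only difference is organizational: the paper foliates ${\mathcal S}_{n,k}$ by the $k$-dimensional leaves $A(P,C,L)$ obtained by fixing the polygon, all centers, and all directions and varying the $k$ lengths simultaneously, whereas you induct on $k$ by peeling off one slit at a time---these are equivalent reorderings of the same Fubini argument, and your inductive version in fact only requires the single-new-slit case of Remark \ref{MultipleSlits} rather than the full multi-slit convergence statement you sketch in your last paragraph.
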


\begin{proof}
Let $P$ be a simply connected $n$-gon. 
Let $C=\{c_1, \ldots c_k\}$ be a set of $k$ 
distinct points in $P$, and let  
$L= \{\ell_1, \ldots, \ell_k\}$ be 
a set of (not necessarily distinct) lines 
that pass through the origin in ${\mathbb R}^2$. 
Let $A(P,C,L)$ be the set of slit polygons  
$P_{\Sigma_1 \cup \Sigma_2 \cup \cdots \cup \Sigma_k}$
where $\Sigma_i$ is centered at $c_i$ and is
parallel to $\ell_i$. Note that a point 
in $A( P,C, L)$ is determined by the 
lengths of the slits. 
 
The sets $A(P,C, L)$ provide a natural
smooth foliation of ${\mathcal S}_{n,k}$
by $k$-dimensional planar sets. 
By Proposition  \ref{SlitSimplicity} and 
Remark \ref{MultipleSlits}, if $P$ 
has simple spectrum, then almost every
slit polygon in $A(P,C,L)$ has simple 
spectrum.  By the main result of 
\cite{HJ1}, almost every simply connected 
$n$-gon has simple spectrum. The result 
follows by integrating transversely to this foliation using 
Fubini's theorem. 
\end{proof}

\subsection{Polygons with mixed boundary conditions}

Let $P$ be a simply connected polygon,
and let $D$ be a union of a set of edges in 
the boundary, $\partial P$,  of $P$. 
Recall from \S \ref{SectionProblem} 
that the eigenfunctions of the 
Dirichlet energy $q$ on $H^1_D(P)$ satisfy 
Dirichlet conditions on $D$ and Neumann 
conditions on $\partial P \setminus D$.
We say that $(P,D)$ has simple spectrum if and only if
the quadratic form $q$  on $H^1_D(P)$ has
simple spectrum.

Let $v_1, v_2, \ldots, v_n$ be a cyclic ordering 
of the edges of a simply connected polygon $P$. 
Let $e_i$ denote the boundary edge joining the vertex $v_i$ 
to $v_{i+1}$.\footnote{Here $e_n$ joins $v_n$ and $v_1$.} 
Given a subset $\sigma \subset \{1, 2, \ldots, n\}$, let 
$\Poly_{n,\sigma}$ denote the set of pairs 
$(P,D)$ where $P$ is a simply connected
$n$-gon \footnote{By {\em simply connected polygon}, 
we mean 
a compact set whose boundary consists of finitely many
line segments and whose interior is simply connected.} 
 and 
\[ D~ =~  \bigcup_{i \in \sigma}   e_i. \]
The set $\Poly_{n,\sigma}$ can be naturally identified 
with an open subset of ${\mathbb R}^{2n}$ and hence
has a natural Borel measure and affine structure. 
\begin{thm}\label{Mixed}
If $n \geq 4$, then almost every polygon in $\Poly_{n,\sigma}$ has simple spectrum.
\end{thm}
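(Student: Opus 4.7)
The plan is to imitate the architecture of the proof of Theorem \ref{SlitSimple}: foliate $\Poly_{n,\sigma}$ by real-analytic $1$-parameter families on which both analytic perturbation theory and the convergence theorem (Theorem \ref{Main}) apply, and then conclude by a Fubini argument transverse to the foliation. The essential new ingredient is that the limit of such a family must land in a situation whose generic simplicity is already known---either a polygon with \emph{uniform} boundary conditions (to which the main result of \cite{HJ1} applies) or a slit polygon with uniform boundary condition on each slit (to which Theorem \ref{SlitSimple} applies).

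Concretely, I would fix a starting polygon $(P_0, D) \in \Poly_{n, \sigma}$ and build a real-analytic family $t \mapsto (P_t, D)$ of polygons in $\Poly_{n, \sigma}$ by contracting a single boundary edge $e_i$ to a point as $t \to 0^+$. Choosing $e_i$ so that its two neighboring edges $e_{i-1}$ and $e_{i+1}$ carry the same boundary condition (both in $\sigma$ or both outside $\sigma$), the limit polygon has strictly fewer edges: the two neighbors fuse into a single edge carrying their common boundary type, possibly together with an interior slit of the same type emanating from the fusion point. By Proposition \ref{Analytic}, the eigenvalues along the family depend analytically on $t$ for $t>0$; by Theorem \ref{Main} each real-analytic eigenbranch admits a finite limit as $t \to 0^+$; and by Theorem \ref{OrderedConvergence} the corresponding normalized eigenfunctions converge in $L^2$ to eigenfunctions of the limit problem. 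On the limit polygon---which is either a simply connected $(n-1)$-gon with a strictly smaller mixed-type configuration, or such a polygon with a uniform slit---generic simplicity holds by induction on $n$, with \cite{HJ1} and Theorem \ref{SlitSimple} serving as base cases. Then, as in Proposition \ref{SlitSimplicity}, if two normalized analytic eigenbranches $\psi_t, \psi^*_t$ were to share a common eigenvalue branch throughout an interval, passing to the limit would yield two orthonormal eigenfunctions sharing the limit eigenvalue, contradicting the generic simplicity of the limit spectrum. Hence each analytic eigenbranch is simple for all but countably many $t$ along the family, and integrating transverse to the foliation yields the result on $\Poly_{n,\sigma}$.

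The main obstacle lies in the combinatorial bookkeeping of the degeneration: to stay within the scope of Theorem \ref{Main}, any slit produced by an edge contraction must carry a \emph{uniform} boundary condition on both sides, since, as the remark following Theorem \ref{MixedDirichletNeumann} warns, mixed-type slits lie outside our reach. This forces us to collapse only edges flanked by two boundary edges of the same type. One must therefore verify that for every $n \geq 4$ and every $\sigma \subset \{1, \ldots, n\}$ some admissible contraction is always available; when no pair of same-type consecutive edges exists (i.e.\ when Dirichlet and Neumann alternate), a supplementary argument is needed, for instance contracting two well-chosen edges simultaneously or first performing a generic perturbation that introduces an interior slit of controlled type. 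Setting up this case analysis so that the induction on $n$ closes is the delicate point, and it is precisely here that the hypothesis $n \geq 4$ enters the proof.
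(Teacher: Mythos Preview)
Your overall strategy---reduce to a simpler boundary pattern by a one-parameter degeneration and appeal to analytic perturbation plus a convergence theorem---is reasonable in spirit, but the proposal has a genuine gap: the degeneration you describe is an \emph{edge contraction on the boundary}, not the interior slit degeneration treated in this paper. Theorem \ref{Main}, Theorem \ref{OrderedConvergence}, and Proposition \ref{Analytic} are all proved for the specific family $\Omega_{\Sigma_t}$ obtained by removing an interior segment of length $2t$; the entire machinery (elliptical coordinates, Mathieu separation, the convexity estimates of \S\ref{ConvexitySection}) is built around that geometry. Shrinking a boundary edge to a point is a different singular perturbation, and nothing in the paper gives you convergence of eigenbranches as $t\to 0$ for that family. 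So your invocation of Theorem \ref{Main} at the crucial step is unjustified. (Analyticity for $t>0$ can indeed be salvaged from \cite{HJ1}, but the limit $t\to 0$ cannot.)

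A second gap is the alternating case, which you acknowledge but do not resolve. When $\sigma$ alternates there is no edge flanked by same-type neighbours, so your induction cannot start; the vague suggestions (``contract two edges simultaneously'', ``introduce an interior slit'') are not arguments. The paper takes essentially the opposite route: it treats the alternating case \emph{first} by an explicit construction---a rectangle with Dirichlet/Neumann on opposite pairs of sides, slit along its axis of symmetry, then halved via the reflection to produce a $(2k+4)$-gon with alternating conditions and simple spectrum---and then reduces the general $\sigma$ to the alternating one by adding vertices on edges. Once a single simple example exists, the connectedness of $\Poly_{n,\sigma}$ together with the analyticity from \cite{HJ1} gives almost-everywhere simplicity without any Fubini/foliation argument and without needing convergence of eigenbranches under a boundary degeneration.
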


\begin{proof}
By arguing as in \cite{HJ1}, we see that it suffices 
to construct one polygon in $\Poly_{n,\sigma}$ 
that has simple spectrum.

We first construct such a polygon in the case
of alternating boundary conditions, that is, 
we suppose that $i \in \sigma$ if and only if 
$i+1 \notin \sigma$ where as usual $n+1 \equiv 1$.  
Note that $n$ is even in this case.   

Let $Q$ be the rectangle $[0,a] \times [0,1]$,  
and let $D=\{0,1\} \times [0,a]$. That is, 
we consider the eigenvalue problem on $Q$ 
with Dirichlet conditions on the 
vertical edges and Neumann conditions on the 
horizontal edges. The set of functions of the form 
\[   \sin \left( \frac{m_1\pi}{a} \cdot x \right) \cdot
     \cos \left( m_2\pi \cdot y \right),  
\]
where $m_1>0$ and $m_2\geq 0$ are 
integers, is an orthonornal 
basis of eigenfunctions. Thus, the set of eigenvalues 
is $\{ (m_1\pi/a)^2 + (m_2 \pi)^2\}$.
In particular, $(Q,D)$ has simple spectrum if and only if
$a^2 \notin {\mathbb Q}$. 

Let $c_1, c_2, \ldots c_k$ be $k=n/2-2$
distinct points on the horizontal segment 
$[0,a] \times \{1/2\}$. By Theorem \ref{SlitSimplicity}, 
there exist horizontal slits 
$\Sigma_1, \ldots, \Sigma_k$ centered respectively
at $c_i$ such that 
$(Q_{\Sigma_1 \cup \cdots \cup \Sigma_k}, D')$ has 
simple spectrum where 
$D'= D \cup \Sigma_1 \cup \cdots \cup \Sigma_k$. 
See Figure \ref{Trick}.


\begin{figure}[h]
\begin{center}

  \psfrag{(0,1)}{$(0,1)$}
  \psfrag{(0,0)}{$(0,0)$}
  \psfrag{(a,0)}{$(a,0)$}
  \psfrag{(a,0)}{$(a,1)$}
  \psfrag{Q}{$Q$}
  \psfrag{Sigma_1}{$\Sigma_1$}
  \psfrag{Sigma_2}{$\Sigma_2$}

\includegraphics{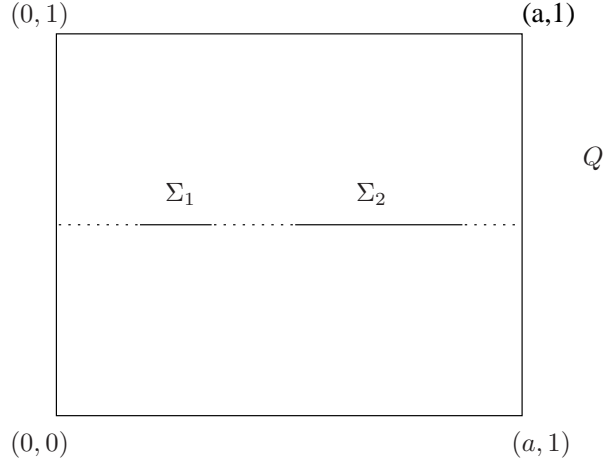}

\end{center}

\caption{\label{Trick} The slit rectangle}

\end{figure}


The slit domain $Q_{\Sigma_1 \cup \cdots \cup \Sigma_k}$
has a reflection symmetry $\tau$ induced by 
$(x,y) \mapsto (x,1-y)$. A standard argument 
shows that restricting functions on
$Q_{\Sigma_1 \cup \cdots \cup \Sigma_k}$ 
to the rectangle $Q'=[0, a] \times [0,1/2]$
defines a bijection between $\tau^*$-invariant 
eigenfunctions of $q$ on 
$H^1(Q_{\Sigma_1 \cup \cdots \cup \Sigma_k}, D')$
and the eigenfunctions of $q$ on $H^1_{D'}(Q')$.  

Therefore, since the spectrum of 
$(Q_{\Sigma_1 \cup \cdots \cup \Sigma_k}, D')$
is simple, the spectrum of $(Q', D')$ is also
simple.  By placing vertices at the endpoints
of the slit, we may regard the rectangle $Q'$
as a polygon with $n=2k+4$ vertices. From this
viewpoint, the boundary conditions given by $D'$
alternate as desired. 

Given a general subset $\sigma \subset \{1, 2, \ldots, n\}$, 
let $F$ be the finite set obtained by identifying 
$i$ and $i+1$ if they either both belong
to $\sigma$ or if they both do not belong to $\sigma$. 
The cyclic ordering of $\{1, 2, \ldots, n\}$ 
induces a cyclic  ordering on $F$, and hence we may 
identify $F$ with $\{1,2, \ldots n'\}$ for some 
$n' \leq n$. Note that $n'$ is either $1$ or even. 
Let $\sigma' \subset \{1,2, \ldots n'\}$ 
denote the set obtained by identifying $i \in \sigma$
with $i+1$ if it also belongs to $\sigma$. 

The set $\sigma' \subset \{1, 2, \ldots, n'\}$
is alternating, and hence, if $n' \geq 4$, there exists an
element $(P',D)$ in $\Poly_{n', \sigma'}$
with simple spectrum. By judiciously adding 
vertices to the boundary edges of the $n'$-gon $P$, 
we obtain an $n$-gon $P'$ such that the set 
$D$ corresponds to $\sigma$. Thus $(P, D)$
is an element of $\Poly_{n, \sigma}$ with simple
spectrum. 

If $n'=2$, then we consider the eigenvalue problem 
on $(Q,D)$ where $Q=[0,a] \times [0,1]$  and $D$ is 
either $\{0\} \times [0,1]$ or its complement. 
In either case a basis of eigenfunctions 
can be constructed by taking products of sines
and cosines. By making an appropriate choice
of $a$, one finds that the spectrum of $(Q,D)$
is simple.  By adding vertices appropriately
to the boundary edges of $Q$, we obtain 
$(Q',D) \in \Poly_{n, \sigma}$ with simple spectrum.

The case $n'=1$ is the main result of \cite{HJ1} and follows from the 
same construction. 
\end{proof}

\subsection{Mutliply connected polygons} 
We first make precise the definition of multiply connected 
polygon. Let $P_0, P_1, \ldots, P_k$ 
be a finite collection of simply connected 
polygons. Assume that 
\begin{enumerate}
\item $P_i$ is contained in the interior of $P_0$ 
       for all $i>0$, and 
\item $P_i \cap P_j = \emptyset$ for all $i,j \geq 0,~i \neq j$,  
\end{enumerate}
The {\em multiply connected polygon $P$ determined
by $P_0, P_1\ldots, P_k$} is obtained by removing 
the interiors of the polygons 
$P_1, \ldots P_k$ from the polygon $P_0$.  
In other words, $P$ is the closure of 
\[  P_0 \setminus  \left( \bigcup_{i=1}^k  P_i \right).
\]

Let $\vec{n}=(n_0, n_1, \cdots, n_k)$ 
denote a vector of integers with $n_i \geq 3$ 
for each $i$.  Let $\Poly(\vec{n})$ 
denote the set of all collections of 
polygons $P_0,  P_1, \ldots, P_k$ satisfying
(1) and (2) above and such that for each $i$, the polygon 
$P_i$ has $n_i$ (ordered) vertices 
$v_{i,1}, \ldots, v_{i,n_i}$. Since the 
(ordered) vertices determine the polygon,
$\Poly(\vec{n})$ is naturally in bijective correspondence 
with an open subset of ${\mathbb R}^d$ 
where $d= 2n_0 + \cdots + 2n_k$.
In particular,  $\Poly(\vec{n})$ inherits
an affine structure and a Borel measure.

\begin{prop} \label{Connected}
The space $\Poly(\vec{n})$ is path connected. 
\end{prop}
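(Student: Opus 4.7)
The plan is to show that any configuration in $\Poly(\vec{n})$ can be continuously deformed to a standard reference configuration. The key idea is that by first shrinking each inner polygon $P_i$ ($i \geq 1$) to have arbitrarily small diameter, we create enough room to move the resulting ``quasi-points'' around inside $P_0$ and to deform $P_0$ itself, without violating the disjointness condition (2) or the containment condition (1).

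First, fix a configuration $(P_0, P_1, \ldots, P_k) \in \Poly(\vec{n})$ and pick an interior point $c_i$ of each $P_i$ for $i \geq 1$. For $s \in (0,1]$, define the scaled polygon $P_i^{(s)} = c_i + s(P_i - c_i)$, which is a simply connected $n_i$-gon containing $c_i$ in its interior. The path $s \mapsto (P_0, P_1^{(s)}, \ldots, P_k^{(s)})$ lies entirely in $\Poly(\vec{n})$ since shrinking about interior points preserves (1) and (2). Choosing $s = \delta$ sufficiently small ensures that each $P_i^{(\delta)}$ has diameter less than any prescribed $\epsilon > 0$.

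Next, we move the centers $c_i$ to a fixed set of $k$ distinct points $c_i^{*}$ in the interior of $P_0$. Since the $P_i^{(\delta)}$ have diameter at most $\epsilon$, by taking $\epsilon$ small compared to the pairwise distances of the $c_i^{*}$ and to the distance of the $c_i^*$ from $\partial P_0$, we can translate each small polygon $P_i^{(\delta)}$ along a smooth path $c_i(\tau)$ from $c_i$ to $c_i^{*}$ while maintaining (1) and (2). Third, deform $P_0$ to a fixed standard $n_0$-gon $P_0^{*}$ (e.g., a large regular $n_0$-gon surrounding the $c_i^{*}$). Since the space of simply connected $n_0$-gons is path connected (a standard fact used in \cite{HJ1}), this can be done through a path of simply connected $n_0$-gons, each of which contains all the $c_i^{*}$; by shrinking $\delta$ further if necessary we keep each $P_i^{(\delta)}$ inside the deforming outer polygon.

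Finally, with the outer polygon and centers fixed in standard position, deform each tiny $P_i^{(\delta)}$ to a standard small regular $n_i$-gon centered at $c_i^{*}$, again using path connectedness of the space of simply connected $n_i$-gons and keeping the diameters small enough throughout to preserve disjointness and containment. The composition of these four paths connects the original configuration to the standard one, proving path connectedness.

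The main technical obstacle is the simultaneous control of containment in $P_0$, pairwise disjointness of the inner polygons, and simple connectedness of each piece during the deformation. This is overcome by the initial shrinking step, which reduces the problem to manipulating essentially point-like objects with plenty of surrounding free space, together with the path connectedness of the space of simply connected $n$-gons (inherited from \cite{HJ1}).
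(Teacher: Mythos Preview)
Your approach is sound and genuinely different from the paper's. The paper argues by induction on the vertex counts: it uses the ``vertex deletion'' move from \cite{HJ1} to collapse one vertex of some $P_i$ onto a neighboring edge, thereby passing from $\Poly(\ldots,n_i,\ldots)$ to $\Poly(\ldots,n_i-1,\ldots)$, until it reaches the base case $\Poly(3,3,\ldots,3)$, which is then handled directly. Your shrink--move--standardize strategy is more geometric: by making the interior polygons point-like you essentially reduce to the connectedness of the configuration space of $k$ points in a planar domain together with the connectedness of $\Poly_{n_0}$. This buys you a cleaner decoupling of the outer and inner deformations, at the cost of not being quite as explicit.

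There is one step that needs more care. In your third step you assert that the path from $P_0$ to the standard $P_0^*$ can be taken so that every intermediate $n_0$-gon contains the fixed points $c_i^*$. Mere path connectedness of $\Poly_{n_0}$ does not give this: a path of $n_0$-gons may well sweep past the $c_i^*$, and shrinking $\delta$ does not help since it is the centers themselves that may exit the deforming outer polygon. One clean fix is to let the tiny inner polygons ride along with the deformation: take any path $t\mapsto P_0(t)$ in $\Poly_{n_0}$, choose a continuous family of small discs $B(q(t),r(t))\subset \mathrm{int}\,P_0(t)$ (possible by compactness of $[0,1]$), and at time $t$ place suitably rescaled copies of the tiny inner polygons near $q(t)$. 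Alternatively, show directly that the space of simply connected $n$-gons containing a fixed interior point is path connected (dilate about that point to a large convex $n$-gon, then interpolate). Either patch completes your argument.
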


\begin{proof}
In \cite{HJ1}, we proved that the space $\Poly_n$
of simply connected $n$-gons is connected
using a construction which we will call `deleting a vertex'. 
In particular,  given a simply connected 
$n$-gon $P$, we constructed a linear path $t \mapsto P(t)$ 
of $n$-gons in $\Poly_n$  with $P(0)=P$ and such that $P(1)$ has 
three consecutive vertices that belong to the same
boundary edge. See Figure \ref{FigDeleting}.
We can regard the polygon $P(1)$ 
as an element of $\Poly_{n-1}$. Thus, since the space of 
triangles, $\Poly_3$, is connected, $\Poly_n$ is connected
for $n \geq 3$ by induction.


\begin{figure}[h]
\begin{center}

  \psfrag{$v_2$}{$v_2$}
  \psfrag{$v_1$}{$v_1$}
  \psfrag{$v_n$}{$v_n$} 
  \psfrag{$v_t$}{$v_t$} 
  \psfrag{$m$}{$m$} 

\includegraphics{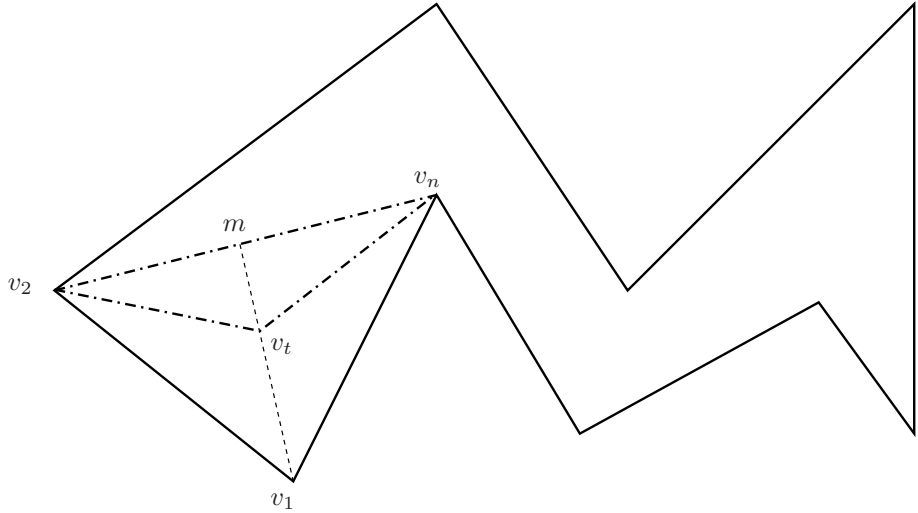}

\end{center}

\caption{\label{FigDeleting}Deleting a vertex}

\end{figure}


Suppose now that $P \in \Poly(\vec{n})$ is defined 
by the simply connected polygons $P_0, P_1, $\\
$\ldots, P_k$
where $n_i > 3$ for some  $i >0$.  Let 
$t \mapsto P_i(t)$ denote the path from $P_i=P_i(0)$
to a polygon $P_i(1)$ having three consecutive vertices
on the same edge. Let $t \mapsto P(t)$ denote the path of polygons 
determined by $P_1, \ldots, P_{i-1}, P_i(t), P_{i+1}, \cdots, P_k$.
The polygon $P(1)$ may be regarded as 
an element of $\Poly(n_0, \ldots, n_{i-1}, 
                   n_i-1, n_{i+1}, \ldots, n_k)$. 
 
Therefore, by induction on the vector $\vec{n}$, 
we find that it suffices to prove that the space
$\Poly(n,3,3,3, \ldots, 3)$ is connected. 
By rescaling the `interior' triangles if necessary, 
we may delete vertices of the polygon $P_0$
to obtain a path to an element of 
$\Poly(3,3,3,3, \ldots, 3)$.
An elementary argument then gives that
$\Poly(3,3,3,3, \ldots, 3)$ is connected.
\end{proof}

\begin{thm}
If $n_0 \geq 4$, then almost every polygon in  
$\Poly(\vec{n})$ has simple spectrum. 
\end{thm}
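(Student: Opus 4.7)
The plan is to combine the path-connectedness of $\Poly(\vec{n})$ (Proposition \ref{Connected}) with real-analytic perturbation theory, reducing the statement to the construction of a single multiply connected polygon in $\Poly(\vec{n})$ with simple spectrum.

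First, along any real-analytic path $t \mapsto P(t)$ in $\Poly(\vec{n})$, the eigenvalues of the Laplacian depend real-analytically on $t$. This follows by the same pull-back construction and Kato-theory argument used in the proof of Theorem \ref{Analytic}: pull back the Dirichlet energy and $L^2$-norm on $P(t)$ to a fixed reference manifold via a $t$-analytic bi-Lipschitz diffeomorphism. Hence for any two analytic eigenbranches $E_j(t)$ and $E_{j'}(t)$ with $j \neq j'$, either $E_j \equiv E_{j'}$ along the path or they coincide on an at most discrete set. By Proposition \ref{Connected}, any two points of $\Poly(\vec{n})$ can be joined by such a path, so if some $P^* \in \Poly(\vec{n})$ has simple spectrum, then for each pair $(j,j')$ the locus $\{P : E_j(P) = E_{j'}(P)\}$ is a proper real-analytic subvariety of $\Poly(\vec{n})$, hence of Lebesgue measure zero. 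A countable union over pairs yields the theorem.

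To produce a single $P^* \in \Poly(\vec{n})$ with simple spectrum, apply Theorem \ref{SlitSimple} to $\mathcal{S}_{n_0,k}$ to obtain a simply connected $n_0$-gon $P_0^*$ together with $k$ disjoint interior slits $\Sigma_1, \ldots, \Sigma_k$ such that the slit polygon $P_0^* \setminus (\Sigma_1 \cup \cdots \cup \Sigma_k)$ has simple spectrum. For small $\epsilon > 0$, thicken each slit $\Sigma_i$ into an $n_i$-gon $P_i(\epsilon)$ of width $\epsilon$ by placing two vertices at the endpoints of $\Sigma_i$ and distributing the remaining $n_i - 2$ vertices on the two long sides, all at distance at most $\epsilon$ from $\Sigma_i$. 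For sufficiently small $\epsilon$, the configuration $(P_0^*, P_1(\epsilon), \ldots, P_k(\epsilon))$ belongs to $\Poly(\vec{n})$ and determines a multiply connected polygon $P(\epsilon)$. Along the real-analytic family $\epsilon \mapsto P(\epsilon)$, the eigenvalues are again real-analytic, so it suffices to show that two distinct analytic eigenbranches cannot coincide identically. For this I would argue that each analytic eigenbranch $E_j(\epsilon)$ has a finite limit as $\epsilon \to 0$ equal to an eigenvalue of the slit polygon, and that correspondingly normalized orthogonal eigenbranches converge to orthogonal eigenfunctions of the slit polygon. Since the slit polygon has simple spectrum, two distinct orthogonal branches cannot limit to the same eigenvalue, so no two distinct branches are identically equal, and for all but countably many $\epsilon$ the polygon $P(\epsilon)$ has simple spectrum.

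The main obstacle is this last convergence step: it concerns a perturbation in which a thin 2-dimensional polygon collapses onto a 1-dimensional slit, rather than the slit-collapsing-to-a-point perturbation analyzed in Sections \ref{SecOrdered}--\ref{SecLimits}. The strategy would be to adapt the localization principle of Proposition \ref{SupportAway} and Corollary \ref{GlobalReduction} to localize the quadratic forms to thin annular neighborhoods of each $\Sigma_i$, use elliptical coordinates around $\Sigma_i$ to reduce the analysis to one-dimensional Mathieu-type problems on $[0, \epsilon] \times S^1$ (whose outer boundary is now the boundary of $P_i(\epsilon)$ rather than an ellipse of fixed radius), and carry over the convexity estimates of Section \ref{SectionSeparation} to control eigenfunctions in this thin annular region. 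The convergence of normalized eigenfunctions can then be handled by an analogue of Theorem \ref{OrderedConvergence} together with a non-concentration estimate in the spirit of Lemma \ref{NonConcentration}, in the same vein as the extension sketched in Remark \ref{MultipleSlits}.
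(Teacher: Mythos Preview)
Your reduction to constructing a single polygon with simple spectrum is correct and matches the paper. Your idea of starting from a slit polygon with simple spectrum (via Theorem \ref{SlitSimple}) and then thickening each slit into an interior polygon is also the paper's idea. The difference lies entirely in how you handle the ``thickening'' family.

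You treat the collapse of a thin $n_i$-gon onto the slit $\Sigma_i$ as a \emph{singular} perturbation, parallel to the slit-collapsing-to-a-point perturbation of \S\ref{SecOrdered}--\S\ref{SecLimits}, and then propose to redevelop the localization/Mathieu/convexity machinery in this new setting. This is a substantial unproved step, and you correctly flag it as the main obstacle. The paper avoids it entirely with a simple observation you are missing: the thickening is \emph{not} a singular perturbation at all. After first reducing to $n_i=3$ by adding vertices on edges, the paper opens each slit $\Sigma_i$ into a triangle $P_i(t)$ by moving a single apex $x_i(t)=tp_i+(1-t)m_i$ from the midpoint $m_i$ of $\Sigma_i$ out to a point $p_i$. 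There is then an ``obvious'' piecewise-linear homeomorphism $f_t:P(1/2)\to P(t)$, \emph{linear in $t$}, and $f_0$ is a bona fide PL homeomorphism onto the slit domain $P(0)$ (the boundary of the triangular hole maps onto the two sides of the slit). Pulling back $q$ and $N$ by $f_t$ gives quadratic forms on the fixed space $H^1(P(1/2))$ that depend polynomially on $t$ on the closed interval $[0,1/2]$. Standard Kato theory (Lemma 2.1 of \cite{HJ1}) then gives real-analytic eigenbranches on all of $[0,1/2]$, including $t=0$. Since $P(0)$ has simple spectrum, so does $P(t_0)$ for some $t_0>0$, and you are done.

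In short: the slit domain, as an abstract manifold with Lipschitz boundary, is already PL-homeomorphic to the multiply connected polygon with a triangular hole, and the homeomorphism varies linearly with the opening parameter. No new convergence theorem is needed. Your proposed route could presumably be made to work, but it replaces a two-line observation by an analysis comparable in difficulty to the main body of the paper.
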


\begin{proof}
Since $\Poly(\vec{n})$ is connected, by arguing  
as in \cite{HJ1},  we see that it suffices 
to construct one polygon in $\Poly (\vec{n})$ 
that has simple spectrum.

Let $P$ belong to $\Poly(4, 3, 3, 3, \ldots, 3)$. 
By judiciously adding vertices to the boundary edges of $P$, 
we may regard $P$ as an element of 
$\Poly(n_0, n_1, n_2,n_3, \ldots, n_k)$
where $n_0\geq 4$ and $n_i \geq 3$ for $i > 0$.  
Thus, it will suffice to prove that there
exists some $P \in \Poly(4, 3, 3,3, \ldots, 3)$
such that $P$ has simple spectrum. 

By Theorem  \ref{SlitSimple} there 
exists a quadrilateral $Q$ with $k$ slits
$\Sigma_1, \Sigma_2, \ldots \Sigma_k$ so that 
$Q_{\Sigma_1 \cup\Sigma_2 \cup \cdots \cup \Sigma_k}$
has simple spectrum. For each $i$, choose 
a point $p_i$ so that the convex hulls, $P_i$,
of $\{p_i \} \bigcup \Sigma_i$
satisfy conditions (1) and (2) above. Let 
$m_i$ be the midpoint of $\Sigma_i$ and define 
the path $x_i(t)= t p_i + (1-t) m_i$. 
Define $P_i(t)$ to be the convex hull of 
$\{x_i \} \bigcup \Sigma_i$. Let $P(t)$ be the multiply
connected polygon defined by 
$Q, P_1(t), P_2(t), \ldots, P_k(t)$. 
See Figure \ref{Open}.
Note that $P(t)$ is an element of $\Poly(4,3,3,\ldots,3)$
for $t>0$ and that $P(0)$ corresponds to  
$(Q, \Sigma_1, \Sigma_2, \ldots, \Sigma_n)$.  


\begin{figure}[h]
\begin{center}

  \psfrag{p_i}{$p_i$}
  \psfrag{e_i}{$e_i$}
  \psfrag{m_i}{$m_i$}
  \psfrag{Q_t}{$Q_t$}  
  \psfrag{P}{$P$}
  \psfrag{Qhalf}{$Q_{\frac{1}{2}}$}

\includegraphics{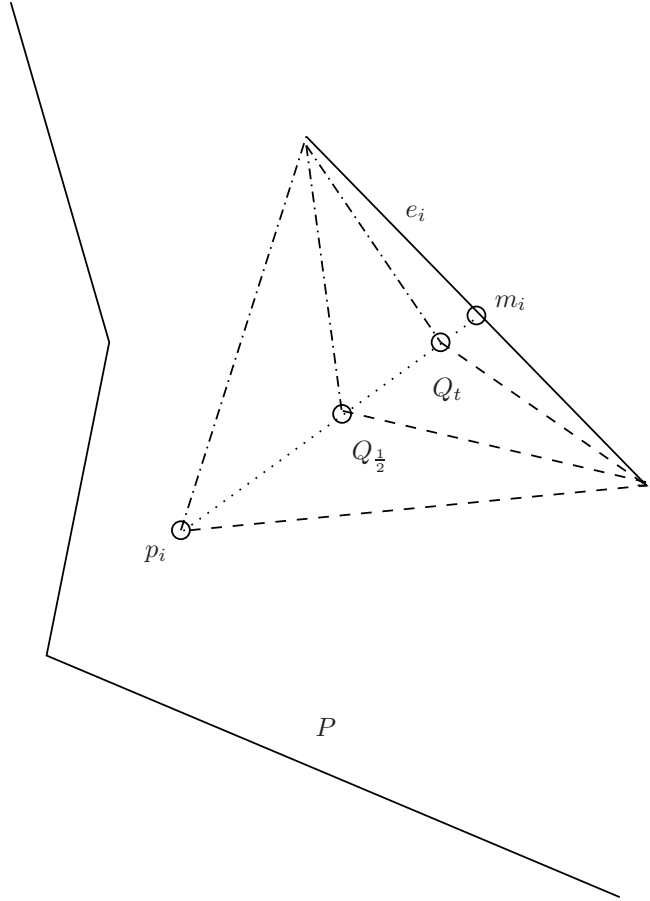}

\end{center}

\caption{\label{Open}Perturbing a slit into a triangle}

\end{figure}


We define a linear path $t \mapsto f_t$ of 
piecewise linear homeomorphisms mapping
$P(1/2)$ onto  $P(t)$ for each $t \in [0,1/2]$. 
In particular, let $Q_i(t)$ be the quadrilateral 
$P_i(1) \setminus P_i(t)$, and let $f^i_t$
be the `obvious' piecewise linear homeomorphism that maps 
$Q_i(1/2)$ onto $Q_i(t)$. Define 
$f_t: P(1/2) \rightarrow P(t)$ by
\[  f_t(x)~ =~ \left\{ \begin{array}{cl}
                x, & \mbox{ if } x \notin \bigcup_i Q_i(1/2) \\
                f_t^i(x), & \mbox{ if } x \in Q_i(1/2).
                 \end{array} \right. 
\]

 By Lemma 2.1 in \cite{HJ1}, the eigenvalues of $P(t)$
vary real-analytically in $t$. Since the spectrum 
of $P(0)$ is simple, there exists $t_0>0$
such that the spectrum of $P(t_0)$ is simple. 
The claim is proven.   
\end{proof}


\appendix

\section{Mathieu functions}

\label{MathieuAppendix}

For the convenience of the reader we provide 
basic facts about Mathieu functions that are used in 
the present paper. For additional information, 
we refer the reader to \cite{Mathieu} and 
\cite{MorseFeshbach}. Our approach is based on 
analytic perturbation theory. See Example VII.3.4 in \cite{Kato}.

Suppose that  $\Delta \psi= E \cdot \psi$. A straightforward 
computation shows that  
\begin{equation} \label{PreSeparate}
 - \left(\partial_z^2~ + \partial_{\theta}^2\right)~ (\psi \circ F_{t})~ =~
   t^2 \cdot E \cdot
   \left(~ \cosh(z)^2~ -~ \cos^2(\theta) \right) \cdot (\psi \circ F_{t}).
\end{equation}
Let $h= t \sqrt{E}$. 
The method of separation of variables leads one to consider
the operator 
\[  A_h~ =~ -\frac{d^2}{d\theta^2}~  +~ h^2 \cos^2(\theta)~ \]
acting self-adjointly on $L^2( {\mathbb R} / (2 \pi {\mathbb Z}), d\theta)$.

\begin{prop}[Angular Mathieu functions] \label{Angular}
For each $i \in {\mathbb Z}$, there exist unique
real-analytic paths $v_i: {\mathbb R} \rightarrow 
L^2( {\mathbb R} / (2 \pi {\mathbb Z}), d\theta)$ and  
$b_i: {\mathbb R} \rightarrow {\mathbb R}$ so that
for each $h \in {\mathbb R}$
\begin{enumerate} 
\renewcommand{\labelenumi}{(\alph{enumi})}
\item $v_i(h)$ is an eigenfunction  
      for $A_h$ with eigenvalue $b_i(h)$:
\[    A_h \left(v_i(h) \right)~ = b_{i}(h) \cdot v_i(h). \]

\vspace{.3cm}

\item $v_i(h)$ has unit norm in   
      $L^2( {\mathbb R} / (2 \pi {\mathbb Z}), d\theta)$, 

\vspace{.3cm}

\item the span of $\{ v_i(h)~ |~  i=0,1,2 \ldots \}$ is
      dense in $ L^2( {\mathbb R} / (2 \pi {\mathbb Z}), d\theta)$, 

\vspace{.3cm}

\item For $i$ odd
\[ v_i(h)(\theta)~ =~ \pi^{-\frac{1}{2}} \cdot \sin(i\theta)~ +~ O(h^2),\]
and for $i$ even
\[ v_i(h)(\theta)~ =~ \pi^{-\frac{1}{2}} \cdot \cos(i\theta)~ +~ O(h^2).\] 
\item For $i$ odd
    \[   b_i(h)~ =~  i^2~ +~ 2h^2~ +~ O(h^4), \]
   and for $i$ even 
     \[   b_i(h)~ =~ i^2~ +~ \frac{1}{2}h^2~ +~ O(h^4) \]
\item For any $i$ and any $h,$ we have $b_i(h)\geq b_i(0).$ 
\end{enumerate}  
\end{prop}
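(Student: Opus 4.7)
The plan is to apply Kato's analytic perturbation theory to the one-parameter family $h \mapsto A_h$, after first using the reflection symmetry of $A_h$ to reduce to a setting in which the unperturbed operator has simple spectrum. Since $\cos^2(\theta)$ is even, $A_h$ commutes with the involution $R \colon \theta \mapsto -\theta$ and hence preserves the orthogonal decomposition $L^2(\mathbb{R}/2\pi\mathbb{Z}) = L^2_{+} \oplus L^2_{-}$ into even and odd functions. At $h=0$, the operator $A_0 = -d^2/d\theta^2$ has simple spectrum on each parity subspace: on $L^2_{+}$ the spectrum is $\{n^2 : n \geq 0\}$ with eigenfunctions spanned by $\cos(n\theta)$, and on $L^2_{-}$ the spectrum is $\{n^2 : n \geq 1\}$ with eigenfunctions $\sin(n\theta)$.

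Because $h \mapsto A_h = A_0 + h^2 \cdot \cos^2(\theta)$ is a polynomial in $h$ with values in self-adjoint operators, it is a real-analytic family of type (A) in the sense of \S VII.3 of \cite{Kato}. On each parity subspace the unperturbed eigenvalues are simple, so Kato's theorem furnishes, locally in $h$, real-analytic branches $b_i(h)$ and $v_i(h)$. To promote these to globally real-analytic objects, I would invoke classical Sturm--Liouville theory: on each parity subspace $A_h$ is a regular second-order self-adjoint operator on an interval, so its spectrum is simple for every $h \in \mathbb{R}$, and consequently the branches, once labeled by their order within each subspace, never cross. This gives (a). Part (b) is the normalization built into the construction, and (c) is the classical completeness of the Sturm--Liouville eigenbasis.

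For (d), the leading term is the $h=0$ eigenfunction already identified, and since $A_{-h}=A_h$ each branch is an even function of $h$; in particular the Rayleigh--Schr\"odinger expansion contains only even powers of $h$, accounting for the $O(h^2)$ and $O(h^4)$ errors. For (e), first-order perturbation theory gives
\[
\bigl. \tfrac{d}{d(h^2)} b_i(h) \bigr|_{h=0} \;=\; \langle v_i(0), \cos^2(\theta)\, v_i(0)\rangle,
\]
and the claimed coefficient is then a direct trigonometric integral using $\cos^2(\theta)=(1+\cos 2\theta)/2$ together with the orthogonality of $\{\cos(n\theta),\sin(n\theta)\}$. Finally, for (f), since $\cos^2(\theta) \geq 0$ the quadratic form of $A_h$ dominates that of $A_0$ on the common form domain; applying the min-max principle within each parity subspace, where the eigenvalues are strictly ordered, yields $b_i(h) \geq b_i(0)$ for every $i$.

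The only mildly delicate point is the global real-analyticity: Kato's theorem produces analytic labels only near simple eigenvalues, so one must rule out branch collisions as $h$ varies. On the full space $L^2(\mathbb{R}/2\pi\mathbb{Z})$ this cannot be done directly, since at $h=0$ every positive eigenvalue $n^2$ has multiplicity two. The parity decomposition is precisely what circumvents the difficulty, because Sturm--Liouville simplicity within each parity class is automatic for all $h$, which is why the labeling makes sense globally.
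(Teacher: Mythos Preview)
Your argument is correct and takes a somewhat different route from the paper's. The paper applies Kato's theorem (VII.3.9) directly on the full space $L^2(S^1)$, where the unperturbed eigenvalues $i^2$ for $i\geq 1$ are double; it then resolves the degeneracy by computing the second-order perturbation matrix $P_i\,\ddot A\,P_i$ in the basis $\{\sin(i\theta),\cos(i\theta)\}$ and observing that it is diagonal, which simultaneously singles out the analytic branches (uniqueness, part (d)) and yields the coefficients in (e). You instead remove the degeneracy at the outset via the parity decomposition $L^2=L^2_+\oplus L^2_-$, reducing each piece to a regular Sturm--Liouville problem on $[0,\pi]$ (Neumann for $L^2_+$, Dirichlet for $L^2_-$) with automatically simple spectrum for every $h$; Kato's theorem then applies in its easiest, nondegenerate form, global analyticity is immediate since no crossings can occur, and (e) comes from ordinary first-order perturbation. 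For (f) the paper uses the Feynman--Hellmann identity $\tfrac{d}{dh}b_i(h)=2h\int_0^{2\pi}\cos^2\theta\,v_{i,h}^2\,d\theta$, which is nonnegative for $h\ge 0$, whereas you invoke $A_h\ge A_0$ together with min--max within each parity block; both arguments are short and valid. Your approach is arguably cleaner for establishing existence, uniqueness, and global analyticity of the branches, since it sidesteps the degeneracy entirely; the paper's degenerate-perturbation calculation, on the other hand, packages the branch-splitting and the expansion (e) into a single $2\times 2$ matrix computation.
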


\begin{remk}
The functions 
$v_{i}(h)$ are known classically as angular Mathieu functions.
and we will use indifferently the 
notations $v_i(h)$ and $v_{i,h}.$
See for example \cite{MorseFeshbach}.
\end{remk}

\begin{proof}
The path $h \mapsto A_h$ is an analytic family 
of compactly resolved operators. Hence the existence
of paths $v_i$ and $b_i$ satisfying (a), (b), and (c)
follows from Theorem VII.3.9 on page 392 in \cite{Kato}. 

The $k^2$-eigenspace of $A_0= \partial_{\theta}^2$ 
is spanned by ${\mathcal B}_i=\{\sin(i\theta),
\cos(i\theta)\}$. Let $P_i$ be the
orthogonal projection onto this eigenspace.  
Let $\ddot{A}$ denote the 
second derivative of $h \mapsto A_h$ evaluated at $h=0$.
One computes the matrix of $P_i \ddot{A} P_i$ with respect to the 
orthonormal basis ${\mathcal B}_i$ to be
\[ \left( \begin{array}{cc}  1/2  & 0 \\ 0 & 2 \end{array}
\right). \]
Uniqueness of the paths as well as property (d) then follow 
from analytic perturbation theory \cite{Kato}. 
The derivative of $b_i$ is given by 
\[ \frac{d}{dh}b_i(h)\,=\,2h \int_0^{2\pi} 
\cos^2(\theta)v_{i,h}(\theta)^2\,d\theta.
\]
Thus $b_i$ is increasing for positive 
$h$ and decreasing for negative $h$ showing that 
$h=0$ is a global minimum.
\end{proof}

\begin{coro} \label{MathieuRepresentation}
Let $\psi$ be an eigenvector of $\Delta$ with Dirichlet 
boundary condition on the slit (resp. Neumann) then 
\[ \psi(z, \theta)~ =~ \sum_i u_{i,h}(z) \cdot v_{i,h}(\theta). \]
where $$v_{i,h}= v_{i}(h)$$
are as above, and $u=u_{i,h}$ satisfies 
the ordinary differential equation
\begin{equation}  \label{UODE2}  
-~ u''(z)~ 
    +~ \left(~  b_{i}(h)~ -~ h^2 \cosh^2(z) \right) 
     \cdot u(z)~ =~ 0,
\end{equation}
with Dirichlet boundary condition at $z=0$ (resp. Neumann).
\end{coro}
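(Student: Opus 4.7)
The plan is to use the fact, established in Proposition \ref{Angular}(c), that $\{v_{i,h}\}_{i\geq 0}$ is a complete orthonormal system in $L^2(S^1,d\theta)$. For each fixed $z\geq 0$, define
\[
  u_{i,h}(z)\ =\ \int_0^{2\pi}\psi(z,\theta)\,v_{i,h}(\theta)\,d\theta,
\]
so that the Parseval expansion $\psi(z,\theta)=\sum_i u_{i,h}(z)\,v_{i,h}(\theta)$ holds in $L^2(S^1,d\theta)$ for every $z$. Since $\psi$ is smooth on the slit domain (in particular smooth up to the boundary $r=0$, using the elliptical chart constructed in \S\ref{SectionProblem}), each $u_{i,h}$ is smooth in $z$, and differentiation under the integral is justified.

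Next I would derive the ODE by pairing the separated equation (\ref{PreSeparate}) against $v_{i,h}$. Rewriting (\ref{PreSeparate}) as
\[
  -\partial_z^2(\psi\circ F_t)\ +\ A_h^\theta(\psi\circ F_t)\ =\ h^2\cosh^2(z)\cdot(\psi\circ F_t),
\]
where $A_h^\theta=-\partial_\theta^2+h^2\cos^2(\theta)$ acts in the $\theta$ variable, and then integrating in $\theta$ against $v_{i,h}$, we obtain
\[
  -u_{i,h}''(z)\ +\ \int_0^{2\pi}(A_h^\theta\psi)(z,\theta)\,v_{i,h}(\theta)\,d\theta\ =\ h^2\cosh^2(z)\,u_{i,h}(z).
\]
Self-adjointness of $A_h^\theta$ on $L^2(S^1,d\theta)$ together with the eigenvalue relation $A_h v_{i,h}=b_i(h)v_{i,h}$ converts the middle term into $b_i(h)\,u_{i,h}(z)$, which yields exactly (\ref{UODE2}).

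Finally I would translate the boundary condition on the slit. Dirichlet conditions give $\psi(0,\theta)\equiv 0$, so $u_{i,h}(0)=0$ for every $i$. For Neumann conditions, the outward conormal to each side of the slit is $\partial_r$; the change of variable $r=t\sinh(z)$ gives $\partial_r=\frac{1}{t\cosh(z)}\partial_z$, so $\partial_r\psi|_{r=0}=0$ is equivalent to $\partial_z\psi|_{z=0}=0$, from which $u_{i,h}'(0)=0$ follows upon differentiating under the integral and evaluating at $z=0$.

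The steps are essentially routine once the orthonormal basis is in place, so there is no substantial obstacle. The only point requiring minor care is justifying term-by-term differentiation of the Fourier-type expansion; this is handled by elliptic regularity of $\psi$ on the slit manifold, which ensures the partial sums converge in $C^k(\{z_0\}\times S^1)$ for any $z_0>0$ and to any desired order, and that the boundary traces of $\psi$ and $\partial_z\psi$ at $z=0$ are also smooth in $\theta$, so pairing with $v_{i,h}$ at $z=0$ is legitimate.
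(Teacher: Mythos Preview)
Your proposal is correct and follows essentially the same approach as the paper: expand $\psi(z,\cdot)$ in the orthonormal basis $\{v_{i,h}\}$ of Proposition~\ref{Angular}, define $u_{i,h}(z)=\int_0^{2\pi}\psi(z,\theta)v_{i,h}(\theta)\,d\theta$, and derive (\ref{UODE2}) by pairing (\ref{PreSeparate}) with $v_{i,h}$ and integrating by parts (equivalently, using self-adjointness of $A_h$). You supply more detail on the boundary conditions and on justifying differentiation under the integral than the paper does, but the argument is the same.
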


\begin{proof}
Each eigenfunction $\psi$ is smooth
and hence by Proposition \ref{Angular}  
\[ \psi(z, \theta)~ =~ \sum_i u_{i,h}(z) \cdot v_{i,h}(\theta). \]
where $h^2= t^2 E$, and  
\[  u_{i,h}(r)~ =~ \int_0^{2 \pi} \psi(z, \theta) 
            \cdot v_{i,h}(\theta)~ d \theta.
\]
One obtains (\ref{UODE}) by using (\ref{PreSeparate}) 
and integrating by parts.
\end{proof}



\noindent
 \\
Laboratoire de Math\'ematiques Jean Leray\\
UMR CNRS 6629-Universit\'e de Nantes\\
2 rue de la Houssini\`ere\\
BP 92 208\\
F-44 322 Nantes Cedex 3\\ 
{\tt Luc.Hillairet@math.univ-nantes.fr}\\

\noindent
Department of Mathematics, \\
Indiana University, Bloomington, IN, 47401 \\
{\tt cjudge@indiana.edu} 

\end{document}